\newtheorem{theorem}{Theorem}[section] 
\newtheorem{lemma}[theorem]{Lemma}
\newtheorem{proposition}[theorem]{Proposition} 
\newtheorem{remark}[theorem]{Remark} 
\numberwithin{equation}{section}
\renewcommand{\ge}{\geqslant}
\renewcommand{\le}{\leqslant}
\renewcommand{\int}{\varint}
\renewcommand{\to}{\rightarrow} 
\newcommand{\divergence}{\nabla\cdot }  
\newcommand{\vorticity}{\nabla\!\times\! }    
\newcommand{\Bdnabla}{\overline{\nabla} } 
\newcommand{\Bddiv}{\Bdnabla\cdot }  
\newcommand{\DT}{\mathfrak{D}_t }  
\newcommand{\energy}{\mathscr{E}}  
\newcommand{\mc}{\mathscr{H}} 
\newcommand{\sff}{\mathrm{II}} 
\newcommand{\LB}{\Delta_{\sff}} 
\newcommand{\parOmega}{\partial\Omega}
\newcommand{\normal}{n}  
\newcommand{\radi}{\mathscr{R}} 
\newcommand{\norm}[1]{\left\| #1 \right\|} 
\newcommand{\module}[1]{\left| #1 \right|} 
\newcommand{\Brace}[1]{\left\{ #1 \right\}} 
\newcommand{\Paren}[1]{\left( #1 \right)} 
\newcommand{\Bracket}[1]{\left[ #1 \right]} 
\newcommand{\CC}{\mathscr{C}_\dagger}
\newcommand{\Tstar}{T^\dagger}
\newcommand{\error}{\mathfrak{R}} 
\newcommand{\tencon}{\boldsymbol{\ast}}
\begin{document}
\title[Finite-time blow-up mechanisms]{Classification of finite-time blow-up mechanisms for the incompressible free-boundary Euler equations with surface tension}
\author{Chengchun Hao, Tao Luo, \and Siqi Yang}
\address{Chengchun Hao\newline
State Key Laboratory of Mathematical Sciences, Academy of Mathematics and Systems Science, Chinese Academy of Sciences, Beijing 100190, China \newline
\and
School of Mathematical Sciences, University of Chinese Academy of Sciences, Beijing 100049, China
}
\address{Tao Luo\newline
Department of Mathematics, City University of Hong Kong, Hong Kong, China}
\address{Siqi Yang\newline School of Mathematics, Statistics and Mechanics, Beijing University of Technology, Beijing 100124, China}
\email{hcc@amss.ac.cn}
\email{taoluo@cityu.edu.hk}
\email{siqiyang@bjut.edu.cn}
\begin{abstract}  
We establish a blow-up criterion for strong solutions of the three-dimensional incompressible Euler equations with surface tension in a bounded domain with a closed moving free boundary. The criterion is formulated at the $H^3\times H^4$ regularity level of the Shatah--Zeng local well-posedness theory and imposes \textit{no} assumptions of symmetry, periodicity, graph structure, or simple connectedness. If the maximal existence time $T<\infty$, then at least one of the following four mechanisms must occur: (i) first self-intersection of the free boundary; (ii) loss of mean curvature regularity in $H^{\frac{3}{2}}$, or loss of boundary regularity in $H^{2+\varepsilon}$ for any sufficiently small fixed $\varepsilon>0$; (iii) loss of $H^{\frac{5}{2}}$ regularity of the normal boundary velocity; or (iv) $L^1_tL^\infty$ blow-up of the interior velocity gradient. For simply connected domains, the interior alternative admits a
refinement involving only the $L^1_tL^\infty$-norm of the vorticity,
and this refinement recovers exactly the classical
Beale--Kato--Majda criterion in the fixed-boundary case.
For irrotational flows in the simply connected free-boundary setting,
the criterion reduces to the three boundary mechanisms. 
\end{abstract}  
\keywords{Free boundary problem, incompressible Euler equations, finite-time blow-up, Beale--Kato--Majda criterion, splash singularity, surface tension}
\subjclass[2020]{35Q35, 35R35, 35B44, 76B03, 76B45}
\maketitle
\tableofcontents
\section{Introduction} 
We consider the three-dimensional incompressible Euler equations with surface tension in a bounded domain $\Omega_t\subset \mathbb{R}^3$, with a closed free surface $\parOmega_t$ evolving in time (see Fig.~\ref{fig:domain}):
\begin{subnumcases}
{\label{eqEuler}}
\DT v+\nabla p=0,& \text{ in } \Omega_t,\label{eqEuler1} \\ 
\divergence v=0, & \text{ in } \Omega_t, \label{eqEuler2}\\ 
p=\mc_{\parOmega_t},\quad v_n=\mathscr{V}_{\parOmega_t},  & \text{ on }  \parOmega_t, \label{eqEuler3}\\ 
v(\cdot,0)=v_0, & \text{ in } \Omega_0,
\end{subnumcases} 
where $t>0$ denotes time, $\DT \coloneqq \partial_t + v \cdot \nabla$ is the material derivative, $v=v(x,t)$ is the velocity field, and $p=p(x,t)$ is the scalar pressure. On $\parOmega_t$, $\normal = (\normal^1, \normal^2, \normal^3)^\top$ denotes the unit outer normal, $\mc_{\parOmega_t}$ denotes the mean curvature, and $\mathscr{V}_{\parOmega_t}$ is the normal velocity of $\parOmega_t$, which equals the normal component $v_n\coloneqq v\cdot\normal$ of the fluid velocity.
The initial domain $\Omega_0$ has $H^4$-regular boundary and the initial velocity field $v_0$ lies in $H^3(\Omega_0)$. For simplicity, the surface tension coefficient is normalized to one.
\begin{figure}[h]
\centering
\includegraphics[width=0.6\linewidth]{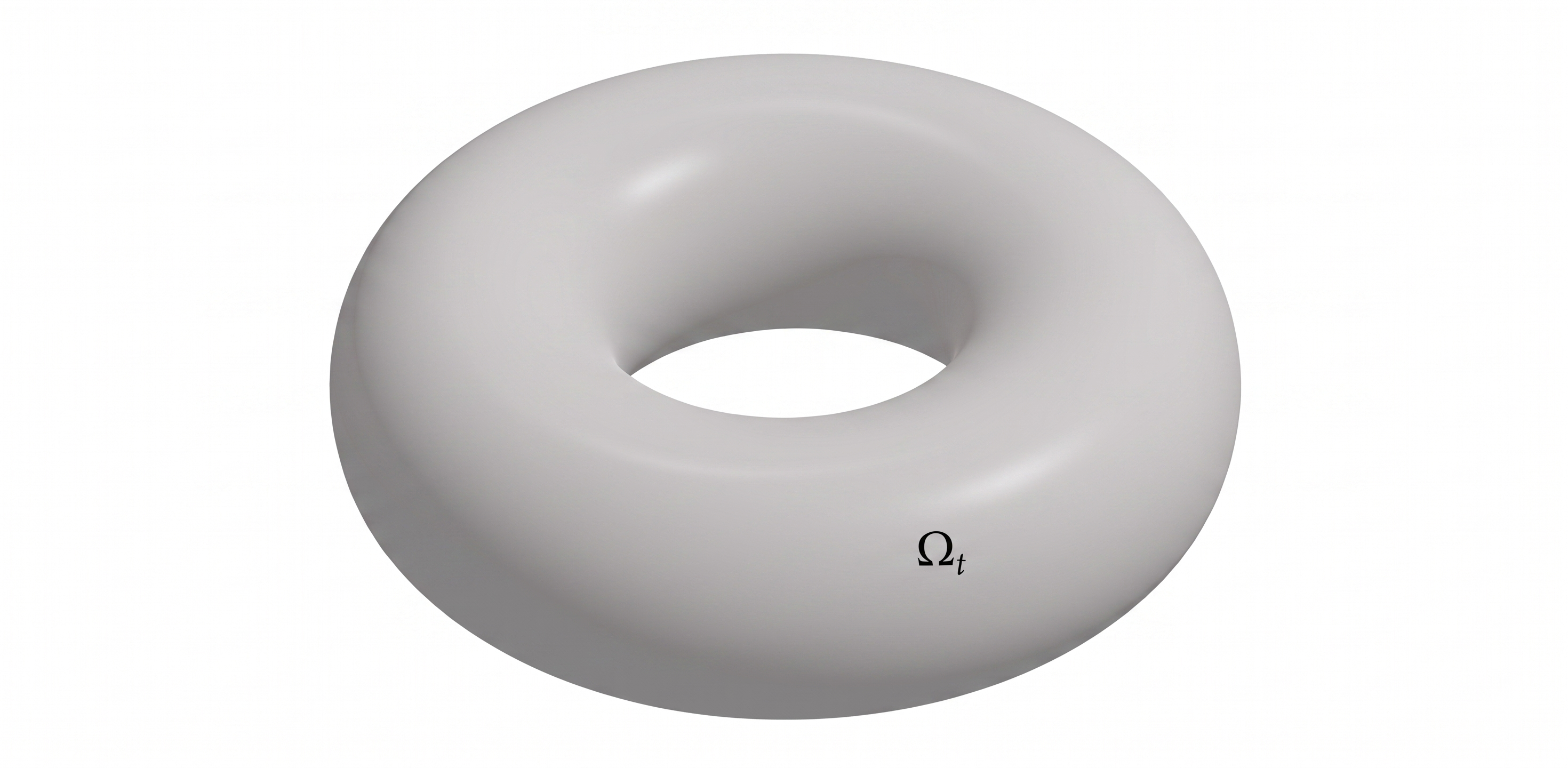}
\caption{Bounded fluid domain with a closed free surface.}
\label{fig:domain}
\end{figure} 

The free-boundary problem for the incompressible Euler equations has been studied extensively over several decades, with advances addressing irrotational and rotational flows, interfacial conditions, and singularity formation. For irrotational flows without surface tension, foundational works established well-posedness \cite{Wu1999,Lannes2005,Wu2011,Germain2012,Alazard2014,Ionescu2015,Alazard2015,Ifrim2017,Wang2018}. Surface tension was later incorporated as a regularizing mechanism, leading to well-posedness results \cite{Iguchi2001,Alazard2011,Deng2017,Ifrim2017} and rigorous zero-surface-tension limits \cite{Ambrose2005,Ambrose2009}. 
For rotational flows, Christodoulou and Lindblad \cite{Christodoulou2000} developed pioneering a priori estimates, and local well-posedness without surface tension was subsequently proved in \cite{Lindblad2005,Coutand2007,Zhang2008}. 
For the interaction of surface tension and vorticity, we refer to \cite{Ogawa2002,Schweizer2005,Coutand2007,Shatah2008a,Shatah2011,Disconzi2019}; for more recent references, see \cite{Ming2024,Aydin2024,Hu2024,Ifrim2025}.

Beyond well-posedness, regular free-boundary solutions may develop finite-time self-intersections under suitable initial conditions, leading to loss of injectivity of the flow map. These singularities appear either as pointwise \textit{splash singularities} or as higher-dimensional \textit{splat singularities} (along an arc in two dimensions or along a surface in three dimensions), as illustrated in Fig.~\ref{fig:case1}. 
A key feature is that, in a parametric representation, the surface may remain regular in parameter space up to the moment of self-intersection. Such singularities were first observed for two-dimensional water waves without surface tension \cite{Cordoba2010,Castro2013} and were later shown to persist in the presence of surface tension \cite{Castro2012b}. Subsequent studies extended these phenomena to rotational Euler equations \cite{Coutand2014} and Navier--Stokes systems \cite{Coutand2019,Castro2019,giga2026}; further developments appear in \cite{Coutand2019a,DiIorio2020a,Cordoba2021,Jeon2024,LuoYang2026}.
\begin{figure}[htb]
\centering
\includegraphics[width=0.8\linewidth]{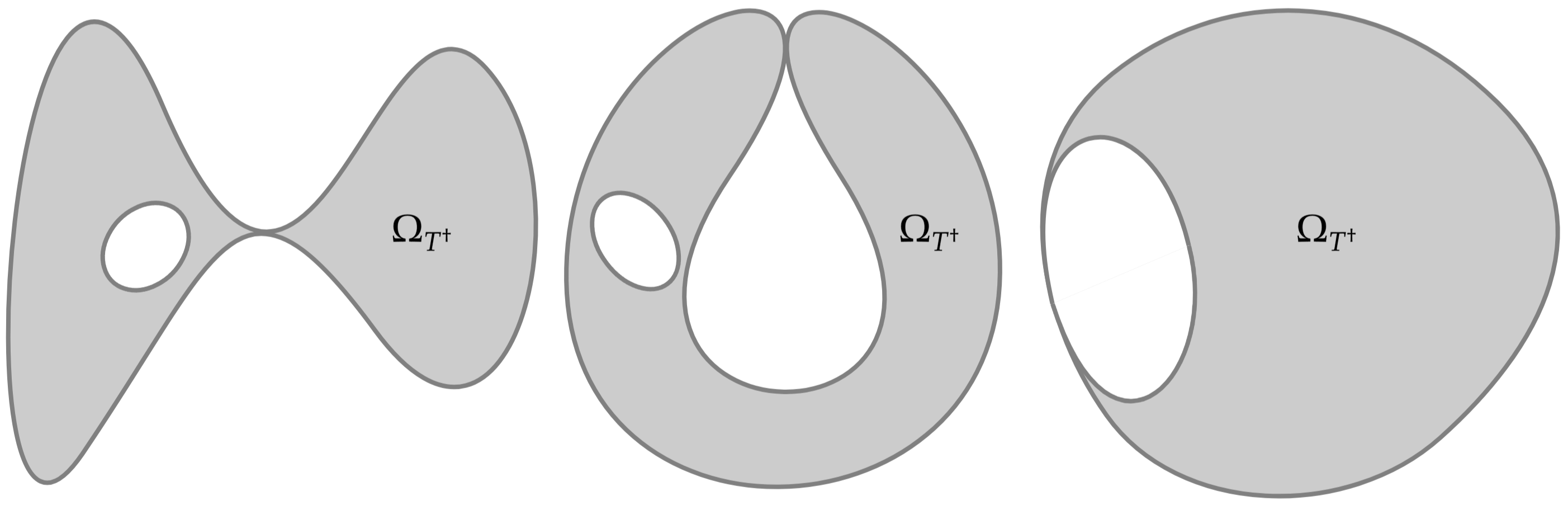}
\caption{Representative cases of self-intersection of the free boundary.}
\label{fig:case1}
\end{figure}

The formation of self-intersections is therefore distinct from loss of Sobolev regularity. Blow-up criteria describing regularity breakdown have been developed systematically for incompressible rotational Euler equations without surface tension, including the graph-based framework for bounded domains \cite{Wang2021a}, the result for initial domains diffeomorphic to a ball \cite{Ginsberg2021}, and the sharp characterization for general bounded domains without a simple-connectedness assumption \cite{Ifrim2025}. When surface tension is included in \eqref{eqEuler3}, related blow-up results have been obtained in \cite{Julin2024} (at vanishing electric fields), \cite{Luo2024}, and \cite{Hao2025} (under zero magnetic fields). 

Since the present work focuses on surface tension effects, we briefly recall several blow-up criteria from \cite{Julin2024,Luo2024,Hao2025}.
For general bounded domains, \cite{Julin2024} represents the free boundary by a height function $h$ defined over a fixed smooth reference surface $\Gamma$ and proves that finite-time singularity formation requires either
\begin{enumerate}[label={\textup{(\roman*)}},topsep=0pt,itemsep=0pt]
\item topological breakdown of the height function representation, or
\item divergence of the composite norm:
\begin{equation*}
\sup_{0\le t< \Tstar}\Paren{\norm{h(\cdot,t)}_{C^{1,\alpha}(\Gamma)}+\norm{\nabla v(\cdot,t)}_{L^\infty(\Omega_t)}+\norm{v_n(\cdot,t)}_{H^2(\parOmega_t)}}=\infty.
\end{equation*}
\end{enumerate}
Here the $L^\infty$-in-time blow-up condition
\begin{equation*}
\sup_{0\le t< \Tstar}\norm{\nabla v(\cdot,t)}_{L^\infty(\Omega_t)}=\infty
\end{equation*}
is stronger in time than its integral counterpart
\begin{equation*}
\int_{0}^{\Tstar}\norm{\nabla v(\cdot,t)}_{L^\infty(\Omega_t)}dt=\infty,
\end{equation*} 
since boundedness of the supremum norm implies boundedness of the $L^1$ norm when $\Tstar\in (0,\infty)$.

In \cite{Luo2024}, a blow-up criterion for solutions $(v,\Omega_t)$ of the free-boundary Euler equations was established in the periodic simply connected graph domain
\begin{equation*}
\Omega_t=\Brace{(x_1,x_2,x_3)\in\mathbb{R}^3:(x_1,x_2)\in\mathbb{T}^2,-b<x_3\le \psi(t,x_1,x_2)},
\end{equation*}
with regularity  $\Paren{v(t),\psi(t)}\in H^s(\Omega_t)\times H^{s+1}(\mathbb{T}^2)$ for $s>\frac{9}{2}$. If the maximal existence time $\Tstar<\infty$, then at least one of the following scenarios must occur:
\begin{align}
\int_{0}^{\Tstar}\norm{(\partial^\varphi\!\times\! v)(t)}_{L^{\infty}}dt&=\infty,\label{eqluo2024thm1.7b}\\
\limsup_{t\to\Tstar}\Paren{\norm{\psi(t)}_{C^3}+\norm{\partial_t\psi(t)}_{C^3}+\norm{\partial^2_{tt}\psi(t)}_{H^{3/2}}}&=\infty,\label{eqluo2024thm1.7a1}\\
\limsup_{t\to\Tstar}\Paren{\int_0^t\norm{(v_1,v_2)(s)}_{\dot{W}^{1,\infty}}ds+\norm{(v_1,v_2)(t)}_{L^{\infty}}}&=\infty,\label{eqluo2024thm1.7a2}\\
\limsup_{t\to \Tstar} \Paren{\frac{1}{\partial_3\varphi(t)}+\frac{1}{b-\norm{\psi(t)}_{L^\infty}}}&=\infty,\ \text{or turning occurs on}\ \parOmega_{\Tstar},
\label{eqluo2024thm1.7c}
\end{align} 
where $\varphi(t,x_1,x_2,x_3)=x_3+\chi(x_3)\psi(t,x_1,x_2)$ with a cut-off function $\chi\in C_0^\infty(-b,0]$ (cf. \cite[(1.5)]{Luo2024}), $\partial_a^\varphi=\partial_a-\frac{\partial_a \varphi}{\partial_3\varphi}\partial_3$ for $a=1,2$, and $\partial_3^\varphi=\frac{1}{\partial_3\varphi}\partial_3$. On the free boundary, \eqref{eqluo2024thm1.7a1} and \eqref{eqluo2024thm1.7c} involve rather complicated quantities, including the second-order time derivative of the graph function.

Moreover, the graph-based criteria in \cite{Luo2024} (and in \cite{Wang2021a} for the case without surface tension) have an inherent geometric limitation: they rely on a global single-valued height function $\psi$. Such a representation ceases to be applicable when the free boundary undergoes \textit{turning} or \textit{self-intersection}, since these scenarios violate the monotone dependence on the vertical variable $x_3$. Consequently, graph formulations naturally describe \textit{top-bottom contact singularities} ($\psi \to -b$), but they do not capture more general boundary evolution such as folding or pinch-off, which requires non-graph representations; see Fig.~\ref{fig:turnandcontact}.
\begin{figure}[htb]
\centering
\includegraphics[width=0.8\linewidth]{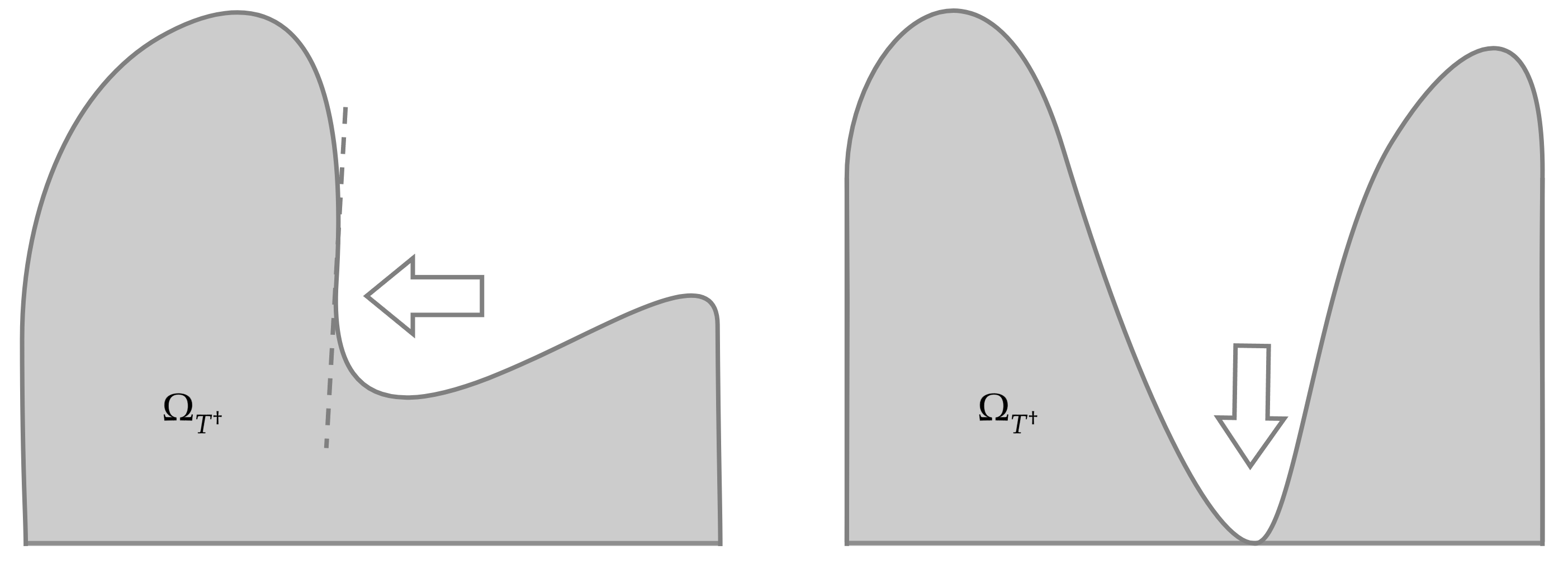}
\caption{Two scenarios under the graph assumption on the torus $\mathbb{T}^2$: on the left, boundary turning occurs, and on the right, the upper free boundary contacts the bottom.
}
\label{fig:turnandcontact}
\end{figure}

To characterize self-intersection of the free boundary, the first and third authors introduced a dynamically updated reference-surface method in \cite{Hao2025}, which preserves non-degenerate coordinate maps as the interface approaches self-contact. That framework gives a blow-up criterion for high-regularity solutions (of $H^6$ class) and captures both boundary and interior singularities. However, the interior singularity condition (cf. (4) in \cite[Theorem 1.2]{Hao2025}) still relies on
\begin{equation*}
\sup_{0\le t< \Tstar}\norm{\nabla v(\cdot,t)}_{H^3(\Omega_t)},
\end{equation*}
so interior blow-up detection remains supremum-based in that formulation.

In this paper, we establish a blow-up criterion for strong solutions in a lower-regularity framework, with velocity in $H^3$. This regularity level is natural in view of the Shatah--Zeng local well-posedness theory for problem \eqref{eqEuler}: the a priori estimates were derived in \cite{Shatah2008,Shatah2008a}, and the existence of solutions with $v\in H^3$ and $\parOmega_t\in H^4$ was established in \cite[Theorem B]{Shatah2011}. Our criterion is formulated at this regularity level in general bounded domains with closed free boundaries. It requires no periodicity, symmetry, simple connectedness, or graph representation, and it includes self-intersection as a genuine boundary blow-up mechanism. Under the additional assumption of simple connectedness, we show that interior blow-up alternative is governed solely by the
time-integrated $L^\infty$-norm of the vorticity, recovering the classical Beale--Kato--Majda (BKM) criterion \cite{Beale1984,Ferrari1993} in the fixed-boundary setting.

\subsection{Main results} 
We now state the main results of the paper.

\begin{theorem}\label{thmain1}
Let $v_0 \in H^3(\Omega_0; \mathbb{R}^3)$ be the initial divergence-free velocity field, where $\Omega_0 \subset \mathbb{R}^3$ is a bounded initial domain (not necessarily simply connected) with a non-self-intersecting closed boundary of class $H^4$. Let $(v, \Omega_t)$ be the solution of the free-boundary problem \eqref{eqEuler} with initial data $(v_0, \Omega_0)$ and maximal existence interval $\left[0, \Tstar\right)$ satisfying
\begin{equation*}
v\in C([0,\Tstar); H^3(\Omega_t))\ \text{and}\ \parOmega_t\in C([0,\Tstar); H^4). 
\end{equation*} 

If $\Tstar< \infty$, then at least one of the following scenarios must occur:
\begin{enumerate}[label={\rm (\arabic*)},topsep=5pt,itemsep=0pt] 
\item Geometric singularity: the first self-intersection of $\parOmega_t$ at $t=\Tstar$.
\item Boundary regularity loss: for any sufficiently small constant $\varepsilon > 0$ independent of $\Tstar$,
\begin{equation*}
\limsup_{t \nearrow \Tstar}\Brace{\norm{\mc_{\parOmega_t}(t)}_{H^{3/2}(\parOmega_t)} +\norm{\parOmega_t}_{H^{2+\varepsilon}}}= \infty.
\end{equation*}
\item Kinematic breakdown: 
\begin{equation*}
\limsup_{t \nearrow \Tstar}\norm{v_n(t)}_{H^{5/2}(\parOmega_t)} = \infty.
\end{equation*} 
\item Interior accumulation of velocity gradients:
\begin{equation}\label{eqBKM1}
\int_{0}^{\Tstar} \norm{\nabla v}_{L^\infty(\Omega_t)}dt=\infty.
\end{equation}
\end{enumerate}
\end{theorem}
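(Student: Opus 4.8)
The plan is to argue by contraposition. Suppose $\Tstar<\infty$ and that \emph{none} of the five scenarios occurs; the goal is to derive the uniform bound
\[
\sup_{0\le t<\Tstar}\Paren{\norm{v(t)}_{H^3(\Omega_t)}+\norm{\parOmega_t}_{H^4}}<\infty
\]
together with a uniform chord-arc (non-self-intersection) bound, and then to restart the local well-posedness theory in $H^3\times H^4$ at $t=\Tstar$, extending $(v,\Omega_t)$ beyond $\Tstar$ and contradicting maximality. Thus everything reduces to a single a priori estimate, and the role of each scenario is to supply one ingredient: the negation of (1)--(2) keeps $\parOmega_t$ uniformly embedded and bounded in $H^{2+\varepsilon}\hookrightarrow C^{1,\alpha}$ with $\mc_{\parOmega_t}$ bounded in $H^{3/2}$ (so a priori $\parOmega_t\in H^{7/2}$, but \emph{not yet} $H^4$); the negation of (3) keeps $v_n$ bounded in $H^{5/2}(\parOmega_t)$; and the negation of (4)--(5) gives $\int_0^{\Tstar}\big(\norm{\Bdnabla v}_{L^\infty(\parOmega_t)}+\norm{\nabla v}_{L^\infty(\Omega_t)}\big)\,dt<\infty$. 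Note that surface tension renders the linearized free-surface operator second-order elliptic and positive, so no Rayleigh--Taylor sign hypothesis is required.

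\textbf{Geometric and elliptic preliminaries.} Using the uniform embeddedness and $C^{1,\alpha}$ control I would fix a (dynamically updated) reference surface and a family of diffeomorphisms onto $\Omega_t$ --- the Lagrangian flow $\eta_t$ of $v$, or an ALE map adapted to $\parOmega_t$ glued to a harmonic extension inside --- whose Jacobian and its inverse are controlled by $\exp(\int_0^t\norm{\nabla v}_{L^\infty})$, so that pulled-back Sobolev norms on the fixed domain are uniformly equivalent to the moving-domain norms and $\partial_t$ in frozen labels equals $\DT$. In these coordinates $\parOmega_t$ is a parametrization $X(t,\cdot)$ with $\partial_tX\cdot\normal=v_n\circ X$, and the metric, second fundamental form, $\mc_{\parOmega_t}$ and the Dirichlet-to-Neumann operator are uniformly controlled by the chord-arc and $H^{2+\varepsilon}$ bounds. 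Next, from $\divergence v=0$, $\vorticity v=\omega$ and the prescribed $v_n$, a div--curl (Hodge) estimate gives $\norm{v}_{H^3(\Omega_t)}\lesssim\norm{\omega}_{H^2(\Omega_t)}+\norm{v_n}_{H^{5/2}(\parOmega_t)}+\norm{v}_{L^2(\Omega_t)}$ with constant depending only on the bounded geometry; taking the divergence of \eqref{eqEuler1} gives $\Delta p=-\partial_iv^j\partial_jv^i$ with $p=\mc_{\parOmega_t}$ on $\parOmega_t$, so elliptic regularity controls $p$ and $\nabla p$ by $\norm{\mc_{\parOmega_t}}_{H^{3/2}}$, the geometry, and a polynomial in $\norm{\nabla v}_{L^\infty}$ and $\norm{v}_{H^3}$. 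Finally, the vorticity obeys $\DT\omega=(\omega\cdot\nabla)v$; commuting with the frozen-label derivatives and absorbing moving-domain terms yields $\tfrac{d}{dt}\norm{\omega}_{H^2(\Omega_t)}^2\lesssim(\norm{\nabla v}_{L^\infty}+\text{bounded})(1+\norm{\omega}_{H^2}^2+\norm{v_n}_{H^{5/2}}^2)$, the interior half of the argument, closed by the integrability of $\norm{\nabla v}_{L^\infty}$.

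\textbf{The boundary/capillary energy estimate.} This is the heart of the proof. One forms the top-order energy $\energy_3(t)=\norm{v(t)}_{H^3(\Omega_t)}^2+(\text{surface-tension term comparable to }\norm{\parOmega_t}_{H^4}^2)$ and differentiates in $t$. The dangerous contribution is the boundary integral produced by integrating $\DT v=-\nabla p$ by parts at third order, $\int_{\parOmega_t}\Bdnabla^{3}v_n\,\Bdnabla^{3}p$; substituting $p=\mc_{\parOmega_t}$, using that $\mc_{\parOmega_t}$ is to leading order a positive second-order elliptic operator applied to $X$ in the normal direction, and $\partial_tX\cdot\normal=v_n\circ X$, exhibits this term --- up to commutators --- as $\tfrac12\tfrac{d}{dt}$ of the surface-tension energy. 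This capillary cancellation is precisely where the $H^4$ boundary regularity is manufactured, since the $(v_n,X)$ pair carries the $|\Bdnabla|^{3}$-smoothing of capillary waves: $v_n\in H^{5/2}$ at the level of $\partial_tX$ forces $X\in H^4$. The leftover commutator and lower-order boundary terms are then estimated, via sharp product/commutator (paradifferential) inequalities, by $\norm{\Bdnabla v}_{L^\infty(\parOmega_t)}$, $\norm{v_n}_{H^{5/2}}$, $\norm{\mc_{\parOmega_t}}_{H^{3/2}}$, $\norm{\parOmega_t}_{H^{2+\varepsilon}}$ and the chord-arc constant --- all a priori bounded by the negation of (1)--(3) --- times $\energy_3$; the small $\varepsilon$ is spent exactly to buy the bit of Hölder regularity of the geometry that these estimates consume. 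Combined with the preliminaries, this gives $\tfrac{d}{dt}\energy_3\le F(t)(1+\energy_3(t))$ with $\int_0^{\Tstar}F<\infty$.

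\textbf{Conclusion and the main obstacle.} Grönwall's inequality yields $\sup_{[0,\Tstar)}\energy_3<\infty$; then, using the uniformly bounded flow map, $v(t)$ is Cauchy in $H^3(\Omega_t)$ and $\parOmega_t$ in $H^4$ as $t\nearrow\Tstar$, producing a limit state $(v^\dagger,\Omega^\dagger)\in H^3\times H^4$ with non-self-intersecting boundary (negation of (1)); restarting local well-posedness at $t=\Tstar$ continues the solution to $[0,\Tstar+\delta)$, contradicting maximality. I expect the main obstacle to be the boundary energy estimate: closing the top-order bound using only the \emph{weak} a priori geometric data furnished by the negation of (2) --- $\mc_{\parOmega_t}\in H^{3/2}$ and $\parOmega_t\in H^{2+\varepsilon}$, rather than their natural counterparts $H^2$ and $H^4$ --- which obliges one to rely on the exact capillary cancellation and to carry the commutator, elliptic, and Dirichlet-to-Neumann estimates to paradifferential precision, all performed intrinsically on a general, low-regularity moving domain with no graph, symmetry, or topological simplification, so that every geometric operator must be controlled uniformly from the chord-arc and $H^{2+\varepsilon}$ bounds alone.
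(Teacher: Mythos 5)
Your overall scheme (contradiction, negating all five scenarios, a priori bound in $H^3\times H^4$, continuation) matches the paper, as do several ingredients: the bootstrap from $\mc_{\parOmega_t}\in H^{3/2}$ and $\parOmega_t\in H^{2+\varepsilon}$ to a uniform $H^{7/2}$ boundary, the div--curl recovery of $\norm{v}_{H^3}$ from the vorticity and $v_n\in H^{5/2}$, the elliptic estimate for $p$ with Dirichlet datum $\mc_{\parOmega_t}$, and the Gr\"onwall/continuation ending. But the decisive step --- the top-order energy estimate --- is where your route diverges from the paper's and where it has genuine gaps. First, your reduction to a fixed domain via a Lagrangian or ALE map ``whose Jacobian and its inverse are controlled by $\exp(\int_0^t\norm{\nabla v}_{L^\infty})$, so that pulled-back Sobolev norms \dots are uniformly equivalent'' is not justified at the relevant regularity: a Lipschitz-level bound on the map gives equivalence of $H^1$-type norms only; equivalence of $H^3(\Omega_t)$ and of $H^4$ boundary norms under pullback requires $W^{3,\infty}$/$H^4$-control of the coordinate map, which is precisely of the order of the unknowns you are trying to bound and is not furnished by the negations of (1)--(5). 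The paper avoids this entirely by never introducing coordinates: its energy \eqref{eqenergy1} is built from $\DT^2 v$ in $L^2(\Omega_t)$, $\Bdnabla(\DT v\cdot\normal)$ on $\parOmega_t$, and $\nabla^2(\vorticity v)$, the capillary cancellation is supplied by the pointwise boundary identity $\DT^2 p=-\LB(\DT v\cdot\normal)+\error$ (Lemma \ref{le5}) with $\error$ estimated in $H^{1/2}(\parOmega_t)$, and $\norm{v}_{H^3}$ is then recovered, not propagated, through the div--curl inequality (Proposition \ref{prclose energy}).

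Second, your stated mechanism for manufacturing $H^4$ boundary regularity --- ``$v_n\in H^{5/2}$ at the level of $\partial_t X$ forces $X\in H^4$'' --- is false as written: integrating $\partial_t X\cdot\normal=v_n$ in time propagates only $H^{5/2}$, and no parabolic smoothing is available for this hyperbolic/dispersive system. In the paper the $H^4$ regularity of $\parOmega_{\Tstar}$ is not carried in the energy at all; it is recovered at the terminal time from the bounded energy via $\nabla p\in H^{3/2}(\Omega_{\Tstar})$, the trace theorem giving $\mc_{\parOmega_{\Tstar}}\in H^2(\parOmega_{\Tstar})$, and the elliptic Lemma \ref{leboundary regularity}. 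If instead you insist on propagating a surface-tension energy comparable to $\norm{\parOmega_t}_{H^4}^2$ with a direct $\Bdnabla^3$ cancellation, you must close the commutator and normal-derivative boundary terms using only the weak data $\parOmega_t\in H^{2+\varepsilon}$, $\mc\in H^{3/2}$, $v_n\in H^{5/2}$ and time-integrable $\norm{\nabla v}_{L^\infty}$, $\norm{\Bdnabla v}_{L^\infty(\parOmega_t)}$ --- which you yourself flag as the ``main obstacle'' and defer to unspecified paradifferential estimates. Since that closing argument is the actual content of the theorem, the proposal as written leaves the heart of the proof unproven, and the specific devices it leans on (coordinate pullback at top order, $v_n\Rightarrow X$ regularity gain) would fail without being replaced by something like the paper's material-derivative energy and terminal elliptic bootstrap.
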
   
\begin{remark} 
The boundary mechanisms (1)--(3) in Theorem \ref{thmain1} are not meant to be mutually exclusive. For example, at a self-intersection time in Case (1), the free boundary may remain regular along each approaching branch, but it is no longer a single embedded surface at the contact point. Consequently, the unit normal, the mean curvature, and the scalar normal velocity have only branchwise traces there, rather than classical single-valued quantities. In this sense, self-intersection is a degenerate geometric situation in which the classical pointwise quantities appearing in Cases (2) and (3) are lost at the contact point. Moreover, geometric regularity, encoded by curvature, depends on tangential boundary smoothness, whereas the normal velocity governs the kinematics of the boundary motion. Neither Case (2) nor Case (3) forces self-intersection, since curvature concentration, cusps, corners, or high-frequency boundary oscillations may occur while the boundary remains embedded. Conversely, as illustrated below, self-intersection may occur while the curvature remains uniformly controlled before the self-intersection time.
\end{remark} 
\begin{figure}[htb]
\centering
\includegraphics[width=0.8\linewidth]{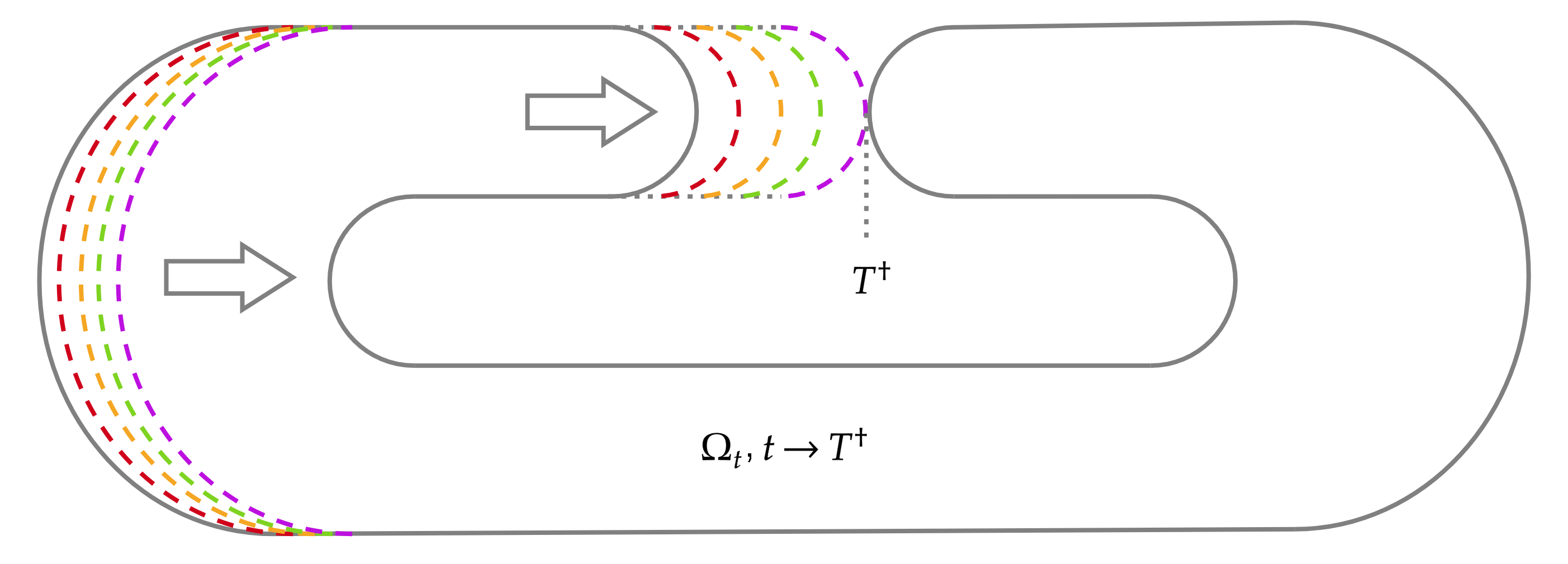}
\caption{Self-intersection with bounded curvature variation.}
\label{fig:ERzeroSFFbounded1}
\end{figure} 
We now illustrate that self-intersection singularities need not be tied to curvature blow-up. In gravity-driven collisions, such as ocean wave impact, the upper fluid layer may descend toward the lower part of the domain without significant curvature amplification before contact. This gives rise to \textit{splash/splat singularities}, where self-intersection occurs with bounded curvature variation, as constructed for 3D Euler equations under Rayleigh--Taylor stability in \cite{Coutand2014} (cf. Fig.~\ref{fig:ERzeroSFFbounded1}). In such examples, curvature norms may grow but remain finite up to self-intersection, as verified for two-dimensional water waves with and without surface tension \cite{Castro2012b,Castro2013}. 

Conversely, in \textit{squeeze singularities} (Fig.~\ref{fig:IRzeroSFFunbounded} and Fig.~\ref{fig:IRzeroSFFbounded}), boundary self-intersection may coincide with curvature blow-up when fluid is extruded through narrowing channels (Fig.~\ref{fig:IRzeroSFFunbounded}). Although no construction is currently available for solutions in fluid domains of the type shown in Fig.~\ref{fig:IRzeroSFFunbounded}, one may expect such a construction to be possible using the domain-perturbation methods developed in \cite{Coutand2014,Coutand2019}. We also mention the recent construction of \textit{splash-squeeze} self-intersection singularities for the two-dimensional plasma-vacuum problem in \cite{Cordoba2025}, where analytic regularity is studied in addition to Sobolev regularity. Squeeze-induced intersection can also preserve curvature regularity when contact occurs along flat interfaces (Fig.~\ref{fig:IRzeroSFFbounded}). For the free-boundary problem \eqref{eqEuler}, the existence of such self-intersecting singular solutions follows from \cite{LuoYang2026} in the case where the magnetic field vanishes.
\begin{figure}[htb]
\centering
\begin{minipage}{0.4\linewidth}
\centering
\includegraphics[width=\linewidth]{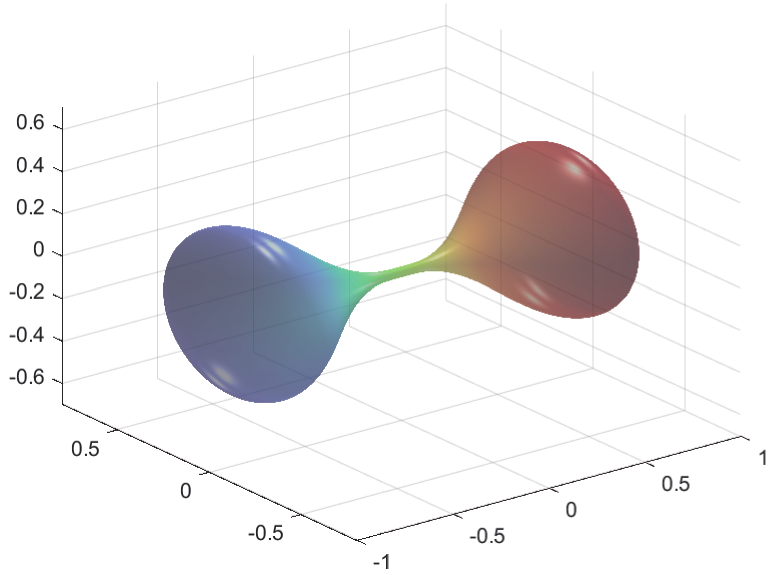} 
\end{minipage}
\begin{minipage}{0.4\linewidth}
\centering
\includegraphics[width=\linewidth]{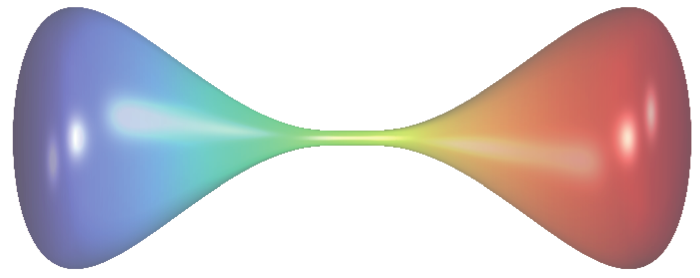} 
\end{minipage} 
\caption{Curvature blow-up accompanying the self-intersection of the free boundary.}
\label{fig:IRzeroSFFunbounded}
\end{figure}
\begin{figure}[htb]
\centering
\begin{minipage}{0.4\linewidth}
\centering
\includegraphics[width=\linewidth]{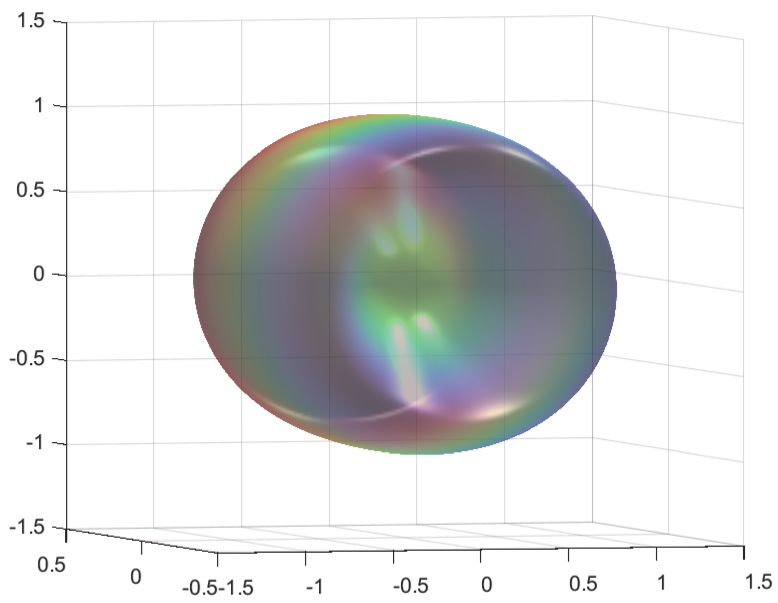} 
\end{minipage}
\begin{minipage}{0.4\linewidth}
\centering
\includegraphics[width=\linewidth]{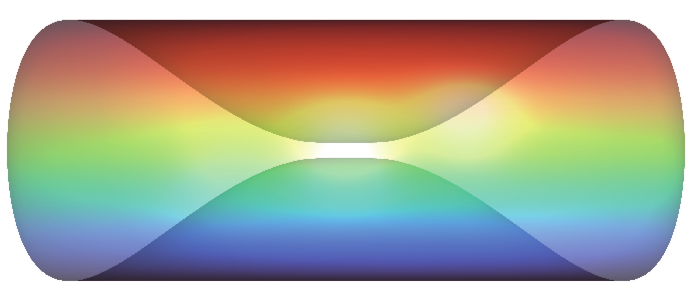} 
\end{minipage} 
\caption{The free boundary approaches self-intersection without curvature blow-up.}
\label{fig:IRzeroSFFbounded}
\end{figure}  
\begin{remark}
We also note the recent work \cite{Ifrim2025} on the free-boundary Euler equations without surface tension. In that setting, the dynamic boundary condition is $p=0$, and the Taylor coefficient plays a central role in both the well-posedness theory and the continuation criterion. In contrast, the present problem has the surface-tension boundary condition \eqref{eqEuler3}, so the pressure trace is prescribed by the mean curvature of the moving boundary. This changes the boundary estimates and leads to a blow-up criterion involving curvature regularity, normal velocity regularity, and possible loss of injectivity of the closed free boundary. Thus, although both works treat the moving boundary intrinsically and do not rely on a global graph representation, the analytic mechanisms and the quantities appearing in the breakdown criteria are different.
\end{remark}
\begin{remark} 
Cases (2) and (3) describe possible boundary regularity loss (Fig.~\ref{fig:case2and3}). The boundary regularity requirements in Case (2) arise from two constraints:
\begin{enumerate}[label={\textup{(\roman*)}},topsep=0pt,itemsep=0pt]
\item $H^2$-regularity of $\parOmega_t$ is the minimal regularity at which the mean curvature is classically defined in \eqref{eqEuler3};
\item recovering boundary regularity from the mean curvature via Lemma \ref{leboundary regularity} requires the slightly stronger $H^{2+\varepsilon}$-regularity, with $\varepsilon > 0$.
\end{enumerate}
This $\varepsilon$-gap reflects the difference between elliptic regularity recovery and the minimal geometric regularity needed to define the curvature. Our purpose here is not to optimize the boundary regularity thresholds in Cases (2) and (3). In this direction, \cite{LuoYang2026} obtains weaker regularity conditions on the curvature and the normal velocity, but at the cost of a stronger time-uniform requirement in the interior blow-up criterion in Case (4).
\begin{figure}[htb]
\centering
\includegraphics[width=0.8\linewidth]{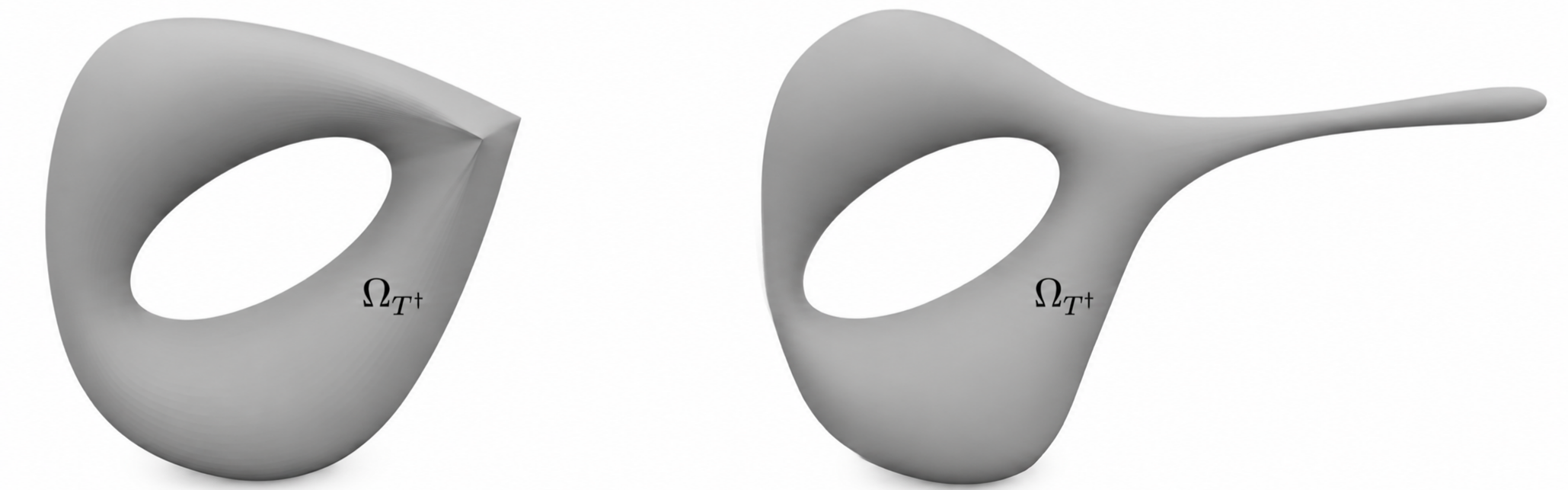}
\caption{Possible scenarios of regularity loss for the free boundary and its normal velocity.}
\label{fig:case2and3}
\end{figure}
\end{remark}

\begin{remark}
The present $H^3\times H^4$ framework is tied to the energy level of the Shatah--Zeng local theory \cite{Shatah2008,Shatah2008a,Shatah2011}, where the nonlinear estimates and the geometric control of the moving boundary can be closed. At this level, the main difficulty is not merely to control the velocity in $H^3$, but to recover this control on a moving closed domain without graph coordinates or a simple-connectedness assumption, while still allowing self-intersection as a possible blow-up scenario. In the continuation argument, once Case (1) is excluded by contradiction, the uniform lower bounds on the exterior and interior ball radii preserve the nondegeneracy of the geometry up to the maximal time. The estimates are then closed by combining the surface-tension boundary energy, pressure estimates, div-curl control of the velocity field, and recovery of boundary regularity from the mean curvature. Together these yield the uniform $H^3$ control of the velocity and $H^4$ control of the moving boundary required for continuation. Our aim is therefore to formulate a continuation criterion at this natural regularity level, rather than to optimize the lowest possible regularity threshold.
\end{remark}

\begin{remark}\label{rmk:fixed-boundary}
For fixed-boundary problems, where $\Omega_t \equiv \Omega_0$ and $v_n \equiv 0$ on $\parOmega_0$ for all $t$, Theorem \ref{thmain1} simplifies as follows:
\begin{enumerate}[label={\textup{(\roman*)}},topsep=0pt,itemsep=0pt]
\item Cases (1)--(3) become trivial, since the geometric quantities are time-independent by assumption.

\item Lemma \ref{le5} no longer requires tracking the evolution of the mean curvature, and the boundary energy term
\begin{equation*}
\int_{\parOmega_t}\module{\Bdnabla\Paren{\DT v\cdot \normal}}^2dS
\end{equation*}
in \eqref{eqenergy1} applied in the proof of Theorem \ref{thmain1} can be dropped.
\end{enumerate}
The blow-up criterion therefore reduces to accumulation of the interior velocity gradient:
\begin{equation}\label{eqBKM3}
\int_{0}^{\Tstar}\norm{\nabla v}_{L^\infty(\Omega_0)} dt = \infty.
\end{equation}
\end{remark}

Under the assumption of simple connectedness, scenario (4) admits
the following BKM-type refinement. 
\begin{theorem}\label{thmain2}
Let $\Tstar< \infty$ denote the maximal existence time in Theorem \ref{thmain1}, and assume that $\Omega_t$ is simply connected. Then the interior blow-up criterion \eqref{eqBKM1} in Theorem \ref{thmain1} admits the vorticity-based refinement
\begin{equation}\label{eqBKM2}
\int_{0}^{\Tstar} \norm{\vorticity v}_{L^\infty(\Omega_t)}dt=\infty.
\end{equation}
Furthermore, in the irrotational case ($\vorticity v \equiv 0$), the blow-up criterion reduces to the boundary alternatives (1)--(3) of Theorem \ref{thmain1}, since the left-hand side of \eqref{eqBKM2} vanishes.
\end{theorem}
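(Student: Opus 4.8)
The plan is to argue by contraposition against Theorem \ref{thmain1}: assuming $\Omega_t$ simply connected, I will show that if scenarios (1)--(4) of Theorem \ref{thmain1} all fail \emph{and} the left-hand side of \eqref{eqBKM2} is finite, then the interior quantity $\int_0^{\Tstar}\norm{\nabla v}_{L^\infty(\Omega_t)}\,dt$ is finite as well, so scenario (5) fails too; Theorem \ref{thmain1} then forces $\Tstar=\infty$, a contradiction. Equivalently, $\Tstar<\infty$ implies one of (1)--(4) holds or \eqref{eqBKM2} holds, i.e.\ \eqref{eqBKM1} may be replaced by \eqref{eqBKM2}. The irrotational assertion is then immediate: if $\vorticity v\equiv 0$ the left-hand side of \eqref{eqBKM2} vanishes, so it cannot diverge, leaving only Cases (1)--(4).

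First I would extract the uniform geometry. Failure of (1) gives a uniform positive lower bound on the self-distance of $\parOmega_t$; failure of (2), together with Lemma \ref{leboundary regularity} applied to the bounded $\norm{\mc_{\parOmega_t}}_{H^{3/2}}$, upgrades the boundary to a uniform $H^{7/2}$ (hence $C^{2}$) family. Continuity of $t\mapsto(v(t),\parOmega_t)$ in $H^3\times H^4$ converts each finite $\limsup$ into a genuine supremum over $[0,\Tstar)$: from the failure of (3), $\sup_t\norm{v_n(t)}_{H^{5/2}(\parOmega_t)}<\infty$; from the finiteness of \eqref{eqBKM2}, both $\sup_t\norm{\vorticity v(t)}_{L^2(\Omega_t)}<\infty$ and $\int_0^{\Tstar}\norm{\vorticity v(t)}_{L^\infty(\Omega_t)}\,dt<\infty$; and from the failure of (4), $\int_0^{\Tstar}\norm{\Bdnabla v(t)}_{L^\infty(\parOmega_t)}\,dt<\infty$. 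Since $\Omega_0$ is simply connected and $\Omega_t$ stays diffeomorphic to it while (1) fails, $\Omega_t$ is simply connected with connected boundary throughout; this is the topological input for the next step.

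The core is a logarithmic div--curl estimate with $t$-uniform constants. On a simply connected domain with connected boundary, a divergence-free field is recovered from its curl and normal trace through the Hodge/Biot--Savart representation with \emph{no} uncontrolled harmonic-field kernel, which yields
\begin{equation*}
	\norm{v}_{H^{s}(\Omega_t)}\le C\Paren{\norm{\vorticity v}_{H^{s-1}(\Omega_t)}+\norm{v_n}_{H^{s-\frac12}(\parOmega_t)}+\norm{v}_{L^2(\Omega_t)}},\qquad 1\le s\le 3,
\end{equation*}
and, at the endpoint, the Beale--Kato--Majda-type bound
\begin{equation*}
	\norm{\nabla v}_{L^\infty(\Omega_t)}\le C\Paren{1+\norm{\vorticity v}_{L^\infty(\Omega_t)}}\Paren{1+\log^{+}\norm{v}_{H^{3}(\Omega_t)}}+C\norm{\vorticity v}_{L^2(\Omega_t)}+C\norm{v_n}_{H^{5/2}(\parOmega_t)},
\end{equation*}
with $C$ depending only on the uniform geometry. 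I would insert this into the a priori energy identity underlying Theorem \ref{thmain1}, in which the only top-order dangerous factors are $\norm{\nabla v}_{L^\infty(\Omega_t)}$ and $\norm{\Bdnabla v}_{L^\infty(\parOmega_t)}$ (all remaining coefficients being bounded once (2)--(3) fail). Writing $\energy(t)\simeq 1+\norm{v(t)}_{H^3(\Omega_t)}^2+\norm{\parOmega_t}_{H^4}^2$ and $\Phi(t):=1+\norm{\vorticity v(t)}_{L^\infty(\Omega_t)}+\norm{\Bdnabla v(t)}_{L^\infty(\parOmega_t)}\in L^1(0,\Tstar)$, and absorbing the bounded terms $\norm{\vorticity v}_{L^2}$, $\norm{v_n}_{H^{5/2}}$ into $\Phi$, one obtains a differential inequality of the shape $\tfrac{d}{dt}\energy(t)\le C\,\Phi(t)\,\Paren{1+\log^{+}\energy(t)}\,\energy(t)$. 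Osgood's lemma (logarithmic Gr\"onwall) gives $\sup_{[0,\Tstar)}\energy<\infty$, hence $\int_0^{\Tstar}\norm{\nabla v}_{L^\infty(\Omega_t)}\,dt\le C\Paren{1+\log^{+}\sup_{[0,\Tstar)}\energy}\int_0^{\Tstar}\Phi(t)\,dt<\infty$, so scenario (5) fails, completing the contradiction.

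I expect the main obstacle to be the two ingredients of the div--curl step on the \emph{moving} domains: (a) making rigorous that the failure of (1)--(2) produces one family of domains on which the Hodge decomposition and the logarithmic bound hold with a single constant --- most cleanly by transporting everything to a fixed reference configuration through uniformly $C^{2}$-controlled diffeomorphisms built from the collar charts, so that the pulled-back operators have uniformly bounded, uniformly elliptic coefficients; and (b) the precise topological accounting, namely that simple connectivity together with connectedness of $\parOmega_t$ is exactly what annihilates the first-cohomology (``harmonic knot'') kernel $\Brace{u:\divergence u=0,\ \vorticity u=0,\ u\cdot\normal=0\ \text{on}\ \parOmega_t}$, so that $v$ is genuinely determined by $(\vorticity v,v_n)$. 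In a non-simply-connected domain this kernel carries uncontrolled circulations not seen by the vorticity, which is precisely why the reduction to \eqref{eqBKM2} requires the topological hypothesis and why Case (5) cannot in general be downgraded further.
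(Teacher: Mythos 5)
Your proposal is correct and follows essentially the same route as the paper: assume (1)--(4) fail together with finiteness of the vorticity quantity, extract the uniform geometry, split $v$ into a gradient part determined by the harmonic Neumann extension of $v_n$ plus a divergence-free field with vanishing normal trace, control the latter by the Ferrari-type logarithmic estimate (valid on simply connected domains with uniformly $C^{2,\alpha}$ boundaries), and close with a logarithmic Gr\"onwall/Osgood argument for the energy, which yields the contradiction; the paper then converts $\sup$ to $\limsup$ for the $L^2$ vorticity norm by continuity, exactly as you indicate. The only cosmetic differences are that the paper cites Ferrari's result and Schauder estimates explicitly rather than a general Hodge/Biot--Savart formulation (and, incidentally, connectedness of $\parOmega_t$ is not needed to kill the tangential harmonic kernel --- simple connectivity alone suffices), but these do not affect the argument.
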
   
In the simply connected fixed-boundary case, viewed through \eqref{eqBKM2}, the sole blow-up scenario \eqref{eqBKM3} refines to the vorticity-dominated condition
\begin{equation}\label{eqBKM5}  
\int_{0}^{\Tstar} \norm{\vorticity v}_{L^\infty(\Omega_0)}dt=\infty,
\end{equation}
which coincides exactly with the classical BKM criterion \cite{Beale1984,Ferrari1993}.  Thus, together with
Theorem \ref{thmain1}, this refinement recovers the classical
interior blow-up characterization when the boundary is fixed, while
preserving the geometric and boundary singularity mechanisms specific to the free-boundary problem.

Compared with the graph-based criterion in \cite{Luo2024}, which also reduces to the classical BKM condition \eqref{eqBKM5} in the fixed-boundary case, our results provide a more intrinsic and simpler characterization of free-boundary breakdown. Indeed, in simply connected graph domains, the criterion
\eqref{eqluo2024thm1.7b}--\eqref{eqluo2024thm1.7c} in
\cite{Luo2024} involves several graph-dependent quantities and
constraints. In contrast, Theorems \ref{thmain1} and
\ref{thmain2} are formulated directly in terms of the geometry and
motion of general closed free boundaries, without relying on graph
representations, periodicity, or symmetry. Moreover, the regularity
requirements in Cases (2) and (3) are strictly weaker than the
$C^3$-regularity requirements on $\psi(t)$ and
$\partial_t\psi(t)$ in \eqref{eqluo2024thm1.7a1}, respectively. In addition, our analysis does not rely on quantities such as 
\begin{equation*}
\norm{\partial^2_{tt}\psi(t)}_{H^{3/2}},\quad \int_0^t\norm{(v_1,v_2)(s)}_{\dot{W}^{1,\infty}}ds,\quad \norm{(v_1,v_2)(t)}_{L^{\infty}},
\end{equation*} 
nor on the graph-specific condition \eqref{eqluo2024thm1.7c}.

\subsection{Comparison with graph-based, boundary-flattening, and fixed-boundary approaches}

Different methods have different strengths. For the present problem, our analytical framework addresses three limitations of conventional approaches to free-boundary problems:
\begin{enumerate}[label={\textup{(\roman*)}},topsep=0pt,itemsep=0pt] 
\item \textit{Intrinsic geometric formulation}: Graph-based assumptions and local boundary flattening (via coordinate mappings $F$) impose coordinate-dependent restrictions on the geometry. In the presence of surface tension, such reductions may obscure the direct role of the mean curvature in boundary condition \eqref{eqEuler3}, or introduce additional nonlinear geometric terms after transforming the equations. The Eulerian formulation used here keeps the geometry of the moving curved boundary in the estimates and is therefore well suited to tracking curvature-driven boundary mechanisms.
\item \textit{Compatibility with moving-boundary dynamics}: Fixed-boundary formulations are less directly adapted to mechanisms that depend on the motion and geometry of the free surface. By contrast, the Eulerian framework used here keeps the boundary dynamics explicit. It couples curvature evolution to the fluid motion (Lemma \ref{leboundary regularity}), reflects the $3/2$-scaling laws through material derivatives in the energy functional (Remark \ref{rem3/2}), and incorporates the boundary-driven mechanisms in Cases (1)--(3) of Theorem \ref{thmain1}.
\item \textit{Boundary singularity mechanisms}: Boundary-flattening and fixed-boundary reductions are not designed to describe all geometric boundary mechanisms considered here, in particular self-intersection of the free surface and loss of regularity of the normal velocity. The Eulerian formulation allows these mechanisms to be stated directly in the blow-up criterion.
\end{enumerate}

\subsection{Paper structure}
The paper is organized as follows. Section \ref{se2} establishes the auxiliary analytical tools used throughout the argument. Section \ref{se3} proves Theorem \ref{thmain1}, using uniform exterior and interior ball radii to exclude free-boundary self-intersection in the continuation argument. Section \ref{se5} treats simply connected domains and proves the BKM-type blow-up characterization in Theorem \ref{thmain2}.

\section{Tailored auxiliary results}\label{se2}

In this section, we present some fundamental results.  We will adopt the Einstein summation convention and utilize the notation $\tencon$ to denote the contraction of certain indices of tensors with constant coefficients (see, e.g., \cite{Hamilton1982, Mantegazza2002}). 

Using the unit outer normal vector $\normal$, we define the tangential derivative of a scalar function $f$ by 
\begin{equation}\label{eqtangentialderivative}
\Bdnabla f \coloneqq \nabla f - \Paren{\nabla f \cdot \normal} \normal.
\end{equation} 
Similarly, for a vector field $F=\Paren{F^1,F^2,F^3}$, the tangential gradient and the tangential divergence are defined by 
\begin{equation*}
\Bdnabla F= \nabla F - \nabla F(\normal\otimes\normal),\quad \Bddiv F \coloneqq \operatorname{Tr}\Paren{\Bdnabla F}.
\end{equation*}  
The $\Paren{i,j}$-th component of the tangential gradient (with $i, j\in \{1, 2, 3\}$)  is given by
\begin{equation*}
\Bdnabla_j F^i=\Paren{\Bdnabla F}_{ij} = \partial_j F^i - \partial_l F^i \normal^l \normal_j. 
\end{equation*}  
With these notations, the mean curvature of $\parOmega_t$ is expressed as
\begin{equation*}
\mc_{\parOmega_t} = \Bddiv \normal.
\end{equation*}  

Furthermore, fix a point  $x\in\parOmega_t$, and let $X,Y\in T_x\Paren{\parOmega_t}$, where  $T_x\Paren{\parOmega_t}$ denotes the tangent space to $\parOmega_t$ at $x$. The second fundamental form is defined by\footnote{The second fundamental form, when defined via the shape operator, typically depends on the choice of orientation \cite[Section 5.2]{Tu2017}, and a minus sign may be included accordingly. We have chosen not to include the minus sign.}
\begin{equation*}
\textbf{\uppercase\expandafter{\romannumeral2}}(X,Y)=\nabla_{X}\normal\cdot Y= X^j\partial_j\normal\cdot Y.
\end{equation*}
Since  $X\cdot\normal=0$, it follows that
\begin{equation*}
\textbf{\uppercase\expandafter{\romannumeral2}}(X,Y)=\Paren{\partial_j\normal^i-\partial_l\normal^i\normal^l\normal_j} X^jY_i=\sff X\cdot Y=X^\top \sff Y,
\end{equation*}
where $i, j, l \in \{1, 2, 3\}$ and $\sff$ is defined by the tangential gradient of $\normal$:
\begin{equation*}
\sff=\Bdnabla\normal,\quad \sff_{ij}=\partial_j\normal^i-\partial_l\normal^i\normal^l\normal_j.
\end{equation*}  
Following the convention in \cite{Fusco2020}, we refer to the matrix $\sff$ as the second fundamental form. 

\begin{lemma}\label{le1} 
Let $f$ be a smooth function, and  $i, j \in \{1, 2, 3\}$. Then, the following holds
\begin{align*}
&[\DT,\nabla]f=-(\nabla v)^\top\nabla f,\quad  [\DT,\partial_i]f=- \partial_i v^k \partial_k f,\\ 
&[\DT, \nabla^2] f =\nabla v\tencon \nabla^2f+\nabla^2 v\tencon\nabla f,\\
&[\DT,\Bdnabla]f=-(\Bdnabla v)^\top\Bdnabla f,\quad [\DT,\Bdnabla_i]f=-\Bdnabla_i v^k\Bdnabla_k f, \\
&[\DT,\Bdnabla^2]f=\Bdnabla v\tencon \Bdnabla^2 f+\Bdnabla^2 v\tencon \Bdnabla f,\\
&\DT \normal=-(\Bdnabla v)^\top \normal,\quad \DT \normal_i=-\Bdnabla_iv^j \normal_j,\\
&\LB \normal =-\module{\sff}^2\normal+\Bdnabla \mc,\\
&\DT \sff_{ij}=-\Bdnabla_j\Bdnabla_iv^k \normal_k-\Bdnabla_iv^k \sff_{kj}-\Bdnabla_j v^k\sff_{ik},\\
&\DT \mc =-\LB v_n-\module{\sff}^2v_n+\Bdnabla \mc\cdot v,
\end{align*}
where $\Bracket{\cdot,\cdot}$ denotes the Lie bracket, $\LB \coloneqq \Bddiv \Bdnabla$
represents the Beltrami--Laplacian operator, and we abbreviate $\mc = \mc_{\parOmega_t}$.
\end{lemma}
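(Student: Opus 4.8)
The plan is to establish each commutator identity and geometric evolution formula by direct computation, organized from the simplest to the most involved. I would begin with the basic commutators: for $[\DT,\nabla]f$, write $\DT\nabla f = \partial_t\nabla f + v^k\partial_k\nabla f$ and $\nabla\DT f = \nabla(\partial_t f + v^k\partial_k f) = \partial_t\nabla f + (\nabla v^k)\partial_k f + v^k\nabla\partial_k f$, so that $[\DT,\nabla]f = -(\nabla v^k)\partial_k f = -(\nabla v)^\top\nabla f$; the componentwise version $[\DT,\partial_i]f = -\partial_i v^k\partial_k f$ is immediate. The second-order commutator $[\DT,\nabla^2]f$ follows by applying $[\DT,\partial_i]$ iteratively: $\DT\partial_i\partial_j f = \partial_i\DT\partial_j f - \partial_i v^k\partial_k\partial_j f = \partial_i\partial_j\DT f - (\partial_i v^k)\partial_k\partial_j f - (\partial_j v^k)\partial_i\partial_k f - (\partial_i\partial_j v^k)\partial_k f$, which has the claimed schematic form $\nabla v\tencon\nabla^2 f + \nabla^2 v\tencon\nabla f$.

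The tangential analogues require more care because $\Bdnabla$ involves $\normal$, which is itself $t$-dependent. The key intermediate step is $\DT\normal = -(\Bdnabla v)^\top\normal$: differentiate the constraint $\normal\cdot\normal = 1$ to get $\DT\normal\cdot\normal = 0$, so $\DT\normal$ is tangential; then use the kinematic boundary condition (the free surface is transported by $v$, i.e. $\DT$ is tangent to the evolving $\parOmega_t$) to identify the tangential part. Concretely, for any tangent vector field $\tau$ on $\parOmega_t$ one has $0 = \DT(\normal\cdot\tau) = \DT\normal\cdot\tau + \normal\cdot\DT\tau$, and computing $\normal\cdot\DT\tau$ via the transport of tangent vectors yields $\DT\normal\cdot\tau = -\tau^j\normal_i\partial_j v^i = -(\Bdnabla_j v^i)\tau^j\normal_i$ (the full derivative may be replaced by the tangential one since $\tau$ is tangential), giving $\DT\normal_i = -\Bdnabla_i v^j\normal_j$. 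Once $\DT\normal$ is known, $[\DT,\Bdnabla]f = [\DT,\nabla]f - \DT((\nabla f\cdot\normal)\normal)$ expands using the already-established $[\DT,\nabla]f$ plus the Leibniz rule on the second term, and after substituting $\DT\normal$ and simplifying (using $\Bdnabla f\cdot\normal = 0$ and $\nabla f\cdot\normal$ terms cancelling against normal-direction pieces) one obtains $-(\Bdnabla v)^\top\Bdnabla f$. The second tangential commutator $[\DT,\Bdnabla^2]f$ then follows by iterating $[\DT,\Bdnabla_i]$ exactly as in the interior case.

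For the curvature identities I would use $\sff = \Bdnabla\normal$ and $\mc = \operatorname{Tr}\sff = \Bddiv\normal$. The formula $\LB\normal = -|\sff|^2\normal + \Bdnabla\mc$ is the standard Gauss--Weingarten-type identity: compute $\Bdnabla_j\Bdnabla_j\normal^i$, use that the normal component of $\Bdnabla\sff$ relates to $|\sff|^2$ (via $\Bdnabla_j\normal\cdot\Bdnabla_j\normal = |\sff|^2$) and the tangential part gives $\Bdnabla_i\mc$ by the Codazzi equation. For $\DT\sff_{ij} = \DT\Bdnabla_j\normal^i$, apply $[\DT,\Bdnabla_j]$ to $\normal^i$ and then substitute $\DT\normal^i = -\Bdnabla_i v^k\normal_k$: $\DT\Bdnabla_j\normal^i = \Bdnabla_j\DT\normal^i + [\DT,\Bdnabla_j]\normal^i = -\Bdnabla_j(\Bdnabla_i v^k\normal_k) - \Bdnabla_j v^k\Bdnabla_k\normal^i = -\Bdnabla_j\Bdnabla_i v^k\normal_k - \Bdnabla_i v^k\Bdnabla_j\normal_k - \Bdnabla_j v^k\sff_{ik}$, and recognizing $\Bdnabla_j\normal_k = \sff_{kj}$ gives the stated expression. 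Finally, $\DT\mc = \DT\operatorname{Tr}\sff = g^{ij}\DT\sff_{ij} + (\DT g^{ij})\sff_{ij}$ where $g$ is the induced metric; tracing the $\DT\sff_{ij}$ formula produces $-\LB v_n$ (from the $-\Bdnabla_j\Bdnabla_i v^k\normal_k$ term after using $v_n = v\cdot\normal$ and Leibniz), the $-|\sff|^2 v_n$ term (from the metric variation together with the $\sff$-contracted pieces), and the transport term $\Bdnabla\mc\cdot v$ (arising because $\DT$ acting on a scalar on the moving surface carries an extra tangential convection); the algebra here, bookkeeping the metric derivative $\DT g^{ij}$ and matching all the $\sff$-quadratic contractions, is the main obstacle. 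I expect the delicate points throughout to be (a) correctly invoking the kinematic condition that $\parOmega_t$ is materially transported — this is what makes $\DT\normal$ purely tangential with the stated value — and (b) the careful index contraction in the last two identities, where one must consistently distinguish ambient derivatives from intrinsic (Levi-Civita) ones and track the variation of the induced metric.
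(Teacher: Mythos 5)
Your proposal is correct in substance, but note that the paper's own ``proof'' of this lemma is essentially a citation: all of the identities except a couple are taken from Section 3.1 of \cite{Shatah2008a}, and the only computation written out is the one for $\DT\sff_{ij}$, which coincides line for line with yours (apply $[\DT,\Bdnabla_j]$ to $\normal_i$ and substitute $\DT\normal_i=-\Bdnabla_i v^k\normal_k$). So you are re-deriving from first principles what the paper outsources; the trade-off is self-containedness versus brevity, and your derivations of the flat commutators, of $\DT\normal$ from $\module{\normal}^2=1$ together with material transport of tangent vectors, and of $\LB\normal$ via Codazzi are exactly the computations underlying the cited formulas. One simplification for the step you flag as the main obstacle: in this paper's conventions $\sff=\Bdnabla\normal$ is an ambient $3\times 3$ matrix and $\mc=\Bddiv\normal=\sum_i\sff_{ii}$ is a Euclidean trace, so no induced metric $g^{ij}$ enters and there is no $\DT g^{ij}$ term to track. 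Tracing your formula for $\DT\sff_{ij}$ over $i=j$ gives $\DT\mc=-\LB v\cdot\normal-2\,\sff:\Bdnabla v$, and the stated form then follows from the Leibniz expansion $\LB v_n=\LB v\cdot\normal+2\,\sff:\Bdnabla v+v\cdot\LB\normal$ combined with $\LB\normal=-\module{\sff}^2\normal+\Bdnabla\mc$; this is precisely the rewriting the paper performs (in reverse) at the start of the proof of Lemma~\ref{le5}, and it closes the bookkeeping you were worried about without any metric-variation argument.
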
	
\begin{proof}
Most of the above formulas can be found in \cite[Section 3.1]{Shatah2008a}. The remaining ones follow from direct calculations. For example, since $\sff_{ij} = \Bdnabla_j \normal_i$, we have 
\begin{align*}
\DT \sff_{ij}&=\Bdnabla_j\DT\normal_i+[\DT,\Bdnabla_j]\normal_i\\
&=\Bdnabla_j\Paren{-\Bdnabla_iv^k \normal_k}-\Bdnabla_j v^k\Bdnabla_k \normal_i\\
&=-\Bdnabla_j\Bdnabla_iv^k \normal_k-\Bdnabla_iv^k \Bdnabla_j\normal_k-\Bdnabla_j v^k\Bdnabla_k \normal_i\\
&=-\Bdnabla_j\Bdnabla_iv^k \normal_k-\Bdnabla_iv^k \sff_{kj}-\Bdnabla_j v^k\sff_{ik}.\qedhere
\end{align*}
\end{proof} 

By divergence-free condition \eqref{eqEuler2}, it follows that 
\begin{equation}\label{eqdivDtv}
\divergence \DT v = \partial_i v^j \partial_j v^i,
\end{equation}
and therefore, from \eqref{eqEuler1}, we obtain
\begin{equation}\label{eqlaplace p}
-\Delta p = \partial_i v^j \partial_j v^i.
\end{equation}

Applying Lemma \ref{le1} to the commutator for the pressure yields the following result. 
\begin{lemma}
For pressure $p$ and velocity $v$ satisfying \eqref{eqEuler1}, the following commutator identities hold:
\begin{equation}\label{eqDtlnablap}
[\DT,\partial_j]p=\partial_j v_i\DT v^i,\quad 
[\DT^{2},\partial_j]p=2\partial_j v_i\DT^2 v^i+\partial_j \DT v_i\DT v^i,
\end{equation} 
for $j = 1,2,3$.
\end{lemma}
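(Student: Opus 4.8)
The plan is to derive both identities from the single commutator formula $[\DT,\partial_j]f = -\partial_j v^k\,\partial_k f$ supplied by Lemma \ref{le1}, together with the momentum equation \eqref{eqEuler1} written as $\partial_k p = -\DT v_k$. For the first identity, applying the commutator formula with $f = p$ and inserting $\partial_k p = -\DT v_k$ gives at once
\[
	[\DT,\partial_j]p = -\partial_j v^k\,\partial_k p = \partial_j v^k\,\DT v_k = \partial_j v_i\,\DT v^i,
\]
where the final relabeling is harmless since indices are raised and lowered with the Euclidean metric.

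For the second identity I would use the standard operator decomposition $[\DT^2,\partial_j] = \DT\,[\DT,\partial_j] + [\DT,\partial_j]\,\DT$ (i.e.\ $[AB,C]=A[B,C]+[A,C]B$ with $A=B=\DT$) and evaluate the two pieces on $p$. The first piece is handled by the product rule and the first identity: $\DT\bigl([\DT,\partial_j]p\bigr) = (\DT\partial_j v_i)\,\DT v^i + \partial_j v_i\,\DT^2 v^i$, and one expands $\DT\partial_j v_i = \partial_j\DT v_i - \partial_j v^k\,\partial_k v_i$ using the formula for $[\DT,\partial_i]$ in Lemma \ref{le1}. For the second piece I first observe, again via the first identity and \eqref{eqEuler1}, that $\partial_k(\DT p) = \DT\partial_k p - [\DT,\partial_k]p = -\DT^2 v_k - \partial_k v_i\,\DT v^i$; hence $[\DT,\partial_j](\DT p) = -\partial_j v^k\,\partial_k(\DT p) = \partial_j v^k\,\DT^2 v_k + \partial_j v^k\partial_k v_i\,\DT v^i$.

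Adding the two pieces, the cubic-in-$v$ contributions $\mp\,\partial_j v^k\partial_k v_i\,\DT v^i$ cancel exactly, leaving $\partial_j\DT v_i\,\DT v^i + \partial_j v_i\,\DT^2 v^i + \partial_j v^k\,\DT^2 v_k$; symmetrizing the last two terms (once more by the Euclidean identification of upper and lower indices) collapses them to $2\partial_j v_i\,\DT^2 v^i$, which is the claimed formula. There is essentially no obstacle here — the computation is purely algebraic — and the only points that need care are the bookkeeping of index positions in the symmetrization that produces the factor $2$, and the derivation of $\partial_k(\DT p)$, where one must commute $\DT$ past $\partial_k$ using the first identity rather than naively differentiating the Euler relation in $x_k$.
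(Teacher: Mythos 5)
Your proposal is correct and follows essentially the same route as the paper: both split $[\DT^2,\partial_j]p$ as $\DT\bigl([\DT,\partial_j]p\bigr)+[\DT,\partial_j]\DT p$, feed in the first identity and the Euler relation $\nabla p=-\DT v$, and observe the cancellation of the cubic terms to arrive at $2\partial_j v_i\DT^2 v^i+\partial_j\DT v_i\DT v^i$. The only difference is cosmetic bookkeeping (you compute $\partial_k(\DT p)$ as a separate step, whereas the paper expands it in-line), so nothing further is needed.
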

\begin{proof}
The first identity follows directly from the commutator $[\DT,\partial_j]$ in Lemma \ref{le1} and the Euler equation \eqref{eqEuler1}. For the second identity, applying the first result and again utilizing the commutator $[\DT,\partial_j]$ from Lemma \ref{le1}, we obtain 
\begin{align*}
[\DT^{2}, \partial_j ]p={}&\DT\Paren{[\DT,\partial_j ]p}+[\DT,\partial_j]\DT p\\
={}&\DT\Paren{\partial_j v_i\DT v^i}-\partial_j v^k\partial_k\DT p\\
={}&\partial_j v_i\DT^2 v^i+\DT\partial_j v_i\DT v^i-\partial_j v^k[\partial_k,\DT] p-\partial_j v^k\DT\partial_k p\\
={}&\partial_j v_i\DT^2 v^i+[\DT,\partial_j] v_i\DT v^i+\partial_j \DT v_i\DT v^i+\partial_j v^k\partial_kv^i\DT v_i+\partial_j v^k\DT^2v_k\\
={}&2\partial_j v_i\DT^2 v^i-\partial_jv^k\partial_kv_i\DT v^i+\partial_j \DT v_i\DT v^i+\partial_j v^k\partial_kv^i\DT v_i\\
={}&2\partial_j v_i\DT^2 v^i+\partial_j \DT v_i\DT v^i.
\end{align*}
This completes the proof. 
\end{proof}  

For a vector field $F$, we define
\begin{equation*}
\vorticity F \coloneqq \nabla F - (\nabla F)^\top.
\end{equation*}
A straightforward calculation also yields the following identity: 
\begin{equation}\label{eqDtvorticity}
[\DT,\vorticity] F=(\nabla v)^\top(\nabla F)^\top-\nabla F\nabla v.
\end{equation} 

To establish the energy estimate in Section \ref{se3}, it is necessary to compute the exact expressions for the following quantities. 

\begin{lemma}\label{le3}  
For the vorticity $\vorticity v$, we have 
\begin{equation*}
\DT \nabla^2 \Paren{\vorticity v}=\nabla v  \tencon \nabla^2 \Paren{\vorticity v}+ \Paren{\vorticity v}\tencon\nabla^{3}v +\nabla^2v  \tencon \nabla \Paren{\vorticity v},
\end{equation*}
where the symbol $\tencon$ denotes the contraction of specific tensor indices as previously stated. 

Moreover, the following identities are valid: 
\begin{align*}
\divergence \DT^2 v&= 3\partial_i v^j \partial_j\DT v^i-2\partial_i v^j\partial_jv^k\partial_kv^i,\\
\divergence\DT^3 v&=4\partial_i v^j \partial_j\DT^2 v^i+3\partial_i\DT v^j \partial_j\DT v^i-12\partial_i v^l\partial_l v^j \partial_j\DT v^i+6\partial_iv^l\partial_l v^j\partial_jv^k\partial_kv^i.
\end{align*}
\end{lemma}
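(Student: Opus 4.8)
The plan is to derive all three identities by repeatedly commuting the material derivative $\DT$ past the spatial operators $\nabla^2(\vorticity\,\cdot\,)$ and $\divergence$, so that everything reduces to the commutator formulas of Lemma \ref{le1}, the Euler equation \eqref{eqEuler1}, and the divergence identity \eqref{eqdivDtv}.

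For the first identity, I would write $\DT\nabla^2(\vorticity v)=\nabla^2\DT(\vorticity v)+[\DT,\nabla^2](\vorticity v)$. Applied componentwise, the commutator is governed by the scalar rule $[\DT,\nabla^2]f=\nabla v\tencon\nabla^2 f+\nabla^2 v\tencon\nabla f$ from Lemma \ref{le1}, which already supplies the terms $\nabla v\tencon\nabla^2(\vorticity v)$ and $\nabla^2 v\tencon\nabla(\vorticity v)$. It then remains to compute $\DT(\vorticity v)=\vorticity(\DT v)+[\DT,\vorticity]v$. Since $\DT v=-\nabla p$ by \eqref{eqEuler1} and the Hessian $\nabla^2 p$ is symmetric, $\vorticity(\DT v)=-(\nabla^2 p-(\nabla^2 p)^\top)=0$, so $\DT(\vorticity v)=[\DT,\vorticity]v=(\nabla v)^\top(\nabla v)^\top-\nabla v\nabla v$ by \eqref{eqDtvorticity}. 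Adding and subtracting $\nabla v(\nabla v)^\top$ and using $(\nabla v)^\top-\nabla v=-\vorticity v$ rewrites this as $\DT(\vorticity v)=-\vorticity v\,(\nabla v)^\top-\nabla v\,\vorticity v$, i.e.\ a contraction of $\vorticity v$ with $\nabla v$. Differentiating twice with the Leibniz rule then yields $(\vorticity v)\tencon\nabla^3 v+\nabla v\tencon\nabla^2(\vorticity v)+\nabla^2 v\tencon\nabla(\vorticity v)$, and combining with the commutator contribution gives the stated formula, the numerical constants being absorbed into $\tencon$.

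For the two divergence identities I would argue by induction on the power of $\DT$, starting from $\divergence\DT v=\partial_i v^j\partial_j v^i$ in \eqref{eqdivDtv}. For any vector field $F$ one has the elementary commutator
\begin{equation*}
	\divergence\DT F=\partial_i\!\bigl(\partial_t F^i+v^k\partial_k F^i\bigr)=\DT\divergence F+\partial_i v^k\partial_k F^i,
\end{equation*}
hence $\divergence\DT^{m+1}v=\DT(\divergence\DT^m v)+\partial_i v^k\partial_k\DT^m v^i$. Taking $m=1$, I differentiate $\partial_i v^j\partial_j v^i$ by the Leibniz rule using $\DT\partial_i v^j=\partial_i\DT v^j-\partial_i v^k\partial_k v^j$ (the commutator $[\DT,\partial_i]$ of Lemma \ref{le1}); relabelling dummy indices merges the two terms first order in $\DT v$ into $2\partial_i v^j\partial_j\DT v^i$, which combined with the correction $\partial_i v^k\partial_k\DT v^i$ produces the coefficient $3$, while the two cubic terms coincide after relabelling and give $-2\partial_i v^j\partial_j v^k\partial_k v^i$. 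Taking $m=2$, I apply $\DT$ to this expression for $\divergence\DT^2 v$, once more via Leibniz and $[\DT,\partial_i]$, noting $\DT\DT v=\DT^2 v$; the cubic term generates three products of the form $\partial v\,\partial v\,\partial\DT v$ that are all equal under cyclic relabelling of the contracted indices, together with three quartic products $\partial_i v^j\partial_j v^k\partial_k v^l\partial_l v^i$ that likewise coincide, and adding the correction $\partial_i v^k\partial_k\DT^2 v^i$ raises the $\DT^2 v$-coefficient from $3$ to $4$; collecting terms gives the stated formula.

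The computations are routine, so the only genuine care is the bookkeeping in the third identity: one must check exactly which of the cubic products $\partial v\,\partial v\,\partial\DT v$ and which of the quartic products of $\nabla v$ are equal under cyclic permutation of the dummy indices, since the coefficients $-12$ and $6$ depend on counting those multiplicities correctly. For the first identity, the single non-mechanical observation is that $\DT(\vorticity v)$ carries $\vorticity v$ itself --- not merely $\nabla v$ --- as a bona fide tensor factor, which is precisely what produces the $(\vorticity v)\tencon\nabla^3 v$ term after two spatial derivatives.
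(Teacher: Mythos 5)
Your proposal is correct and follows essentially the same route as the paper: $\vorticity\DT v=0$ from \eqref{eqEuler1} together with the commutators of Lemma \ref{le1} and \eqref{eqDtvorticity} for the first identity, and the $[\divergence,\DT]$ commutation plus dummy-index relabelling for the divergence identities, yielding the same coefficients $3,-2,4,3,-12,6$. The only cosmetic difference is that you derive $\divergence\DT^2 v$ by the same recursion $\divergence\DT^{m+1}v=\DT\divergence\DT^m v+\partial_i v^k\partial_k\DT^m v^i$ that the paper uses only for $\divergence\DT^3 v$, whereas the paper expands $\DT=\partial_t+v\cdot\nabla$ directly in that step.
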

\begin{proof}

Since, by \eqref{eqEuler1},
\begin{equation*}
\vorticity \DT v = 0,
\end{equation*}
it follows that
\begin{equation*}
\DT \Paren{\vorticity v} = \DT \vorticity v - \vorticity \DT v = [\DT, \vorticity] v,
\end{equation*}
and by applying the identity \eqref{eqDtvorticity}, we obtain
\begin{equation}\label{eqDtcurlv}
\DT \Paren{\vorticity v} = (\nabla v)^\top (\nabla v)^\top - \nabla v \nabla v= -(\nabla v)^\top \Paren{\vorticity v} - \Paren{\vorticity v} \nabla v. 
\end{equation} 

Then, the first claim follows immediately from the commutator $[\DT, \nabla^2]$ in Lemma \ref{le1}. Indeed, applying \eqref{eqDtcurlv}, we obtain 
\begin{align*}
\DT\nabla^2\Paren{\vorticity v}
&=\nabla^2\DT\Paren{\vorticity v}+[\DT,\nabla^2]\Paren{\vorticity v}\\
&=\nabla^2\Bracket{-(\nabla v)^{\top}\Paren{\vorticity v}-\Paren{\vorticity v}\nabla v}+\nabla v\tencon \nabla^2 \Paren{\vorticity v}+\nabla^2 v\tencon\nabla \Paren{\vorticity v}\\
&=\nabla v  \tencon \nabla^2 \Paren{\vorticity v}+ \Paren{\vorticity v}\tencon\nabla^{3}v +\nabla^2v  \tencon \nabla \Paren{\vorticity v}.
\end{align*}

Next, using divergence-free condition \eqref{eqEuler2}, the commutator $[\DT, \partial_j]$ from Lemma \ref{le1}, and the definition of the material derivative $\DT = \partial_t + v^k \partial_k$, we obtain 
\begin{align*}
\divergence \DT^2 v={}&\partial_i\Paren{\partial_t\DT v^i+v^j\partial_j\DT v^i}\\
={}&\partial_i\Bracket{\partial_t\Paren{\partial_t v^i+v^k\partial_k v^i}+v^j\partial_j\Paren{\partial_t v^i+v^k\partial_k v^i}}\\
={}&\partial_i\Bracket{\partial^2_t v^i+\partial_tv^k\partial_k v^i+v^k\partial_t\partial_k v^i+v^j\Paren{\partial_t\partial_j v^i+\partial_jv^k\partial_k v^i+v^k\partial^2_{jk} v^i}}\\
={}&\partial_t\partial_iv^k\partial_k v^i+\partial_iv^k\partial_t\partial_k v^i+\partial_iv^j\Paren{\partial_t\partial_j v^i+\partial_jv^k\partial_k v^i+v^k\partial^2_{jk} v^i}\\
&+v^j\Paren{\partial^2_{ij}v^k\partial_k v^i+\partial_iv^k\partial^2_{jk} v^i}\\
={}&3\partial_t\partial_jv^i\partial_i v^j+3\partial_iv^jv^k\partial^2_{jk} v^i+\partial_iv^j\partial_jv^k\partial_k v^i\\
={}&3\partial_i v^j\DT \partial_jv^i+\partial_iv^j\partial_jv^k\partial_k v^i\\
={}&3\partial_i v^j \partial_j\DT v^i+3\partial_i v^j[\DT, \partial_j]v^i+\partial_iv^j\partial_jv^k\partial_k v^i\\
={}&3\partial_i v^j \partial_j\DT v^i-2\partial_i v^j\partial_jv^k\partial_kv^i. 
\end{align*}

Finally, from the above formula and again using the commutator $[\DT, \partial_j]$ in Lemma \ref{le1}, it follows that 
\begin{align*}
\divergence\DT^3 v={}&\DT\divergence\DT^2 v+[\divergence,\DT]\DT^2 v\\
={}&\DT\Bracket{3\partial_i v^j \partial_j\DT v^i-2\partial_i v^j\partial_jv^k\partial_kv^i}+[\partial_j,\DT]\DT^2 v^j\\
={}&3\DT\Paren{\partial_i v^j \partial_j\DT v^i}-6\Paren{\DT\partial_i v^j}\partial_jv^k\partial_kv^i+\partial_jv^i\partial_i\DT^2 v^j\\
={}&3\DT\partial_i v^j \partial_j\DT v^i+3\partial_i v^j\DT \partial_j\DT v^i-6\partial_i\DT v^j\partial_jv^k\partial_kv^i+6\partial_iv^l\partial_l v^j\partial_jv^k\partial_kv^i\\
&+\partial_jv^i\partial_i\DT^2 v^j\\
={}&3\Paren{\partial_i\DT v^j \partial_j\DT v^i-\partial_i v^l\partial_l v^j \partial_j\DT v^i} +3\Paren{\partial_i v^j \partial_j\DT^2 v^i-\partial_i v^j \partial_jv^l\partial_l\DT v^i}\\
&-6\partial_i\DT v^j\partial_jv^k\partial_kv^i+6\partial_iv^l\partial_l v^j\partial_jv^k\partial_kv^i+\partial_jv^i\partial_i\DT^2 v^j\\
={}&4\partial_i v^j \partial_j\DT^2 v^i+3\partial_i\DT v^j \partial_j\DT v^i-12\partial_i v^l\partial_l v^j \partial_j\DT v^i+6\partial_iv^l\partial_l v^j\partial_jv^k\partial_kv^i,
\end{align*}
where we have utilized the following identities: 
\begin{align*}
\DT\partial_i v^j \partial_j\DT v^i&=\partial_i\DT v^j \partial_j\DT v^i-\partial_i v^l\partial_l v^j \partial_j\DT v^i,\\
\partial_i v^j\DT \partial_j\DT v^i&=\partial_i v^j \partial_j\DT^2 v^i-\partial_i v^j \partial_jv^l\partial_l\DT v^i.
\end{align*} 
This concludes the proof.
\end{proof}

To control $I_1(t)$ in Proposition \ref{prenergy estimate}, we extract the divergence part of $-\Delta \DT^2 p$.
\begin{lemma}\label{le4}  
The following identity holds:
\begin{align*}
-\Delta \DT^2 p={}&4\partial_i v^j \partial_j\DT^2 v^i+3\partial_i\DT v^j \partial_j\DT v^i-12\partial_i v^l\partial_l v^j \partial_j\DT v^i+6\partial_iv^l\partial_l v^j\partial_jv^k\partial_kv^i\\
&+\sum_j\partial_j \Paren{2\partial_j v_i\DT^2 v^i+\partial_j \DT v_i\DT v^i}.
\end{align*}
\end{lemma}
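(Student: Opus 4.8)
The plan is to reduce $-\Delta\DT^2 p$ to the divergence $\divergence\DT^3 v$, which has already been computed in Lemma~\ref{le3}, plus a correction coming from the commutator in \eqref{eqDtlnablap}. Writing $\Delta=\partial_j\partial_j$, I would first move one copy of $\DT^2$ past the \emph{inner} spatial derivative, using the second identity in \eqref{eqDtlnablap}:
\begin{equation*}
	\partial_j\DT^2 p=\DT^2\partial_j p-[\DT^2,\partial_j]p=\DT^2\partial_j p-\Paren{2\partial_j v_i\DT^2 v^i+\partial_j \DT v_i\DT v^i}.
\end{equation*}
Applying $-\partial_j$ and summing over $j$ then yields
\begin{equation*}
	-\Delta\DT^2 p=-\partial_j\DT^2\partial_j p+\sum_j\partial_j\Paren{2\partial_j v_i\DT^2 v^i+\partial_j \DT v_i\DT v^i},
\end{equation*}
so that it only remains to identify the first term on the right-hand side.

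For that term I would invoke the Euler equation \eqref{eqEuler1} in the form $\partial_j p=-\DT v^j$, which gives $\DT^2\partial_j p=-\DT^3 v^j$ and hence
\begin{equation*}
	-\partial_j\DT^2\partial_j p=\partial_j\DT^3 v^j=\divergence\DT^3 v.
\end{equation*}
Substituting the explicit expression for $\divergence\DT^3 v$ from Lemma~\ref{le3} produces precisely the four gradient-polynomial terms
$4\partial_i v^j \partial_j\DT^2 v^i+3\partial_i\DT v^j \partial_j\DT v^i-12\partial_i v^l\partial_l v^j \partial_j\DT v^i+6\partial_iv^l\partial_l v^j\partial_jv^k\partial_kv^i$,
and combining this with the divergence correction above gives the claimed identity.

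The argument is essentially bookkeeping; the only point requiring care is the order in which $\DT^2$ is commuted with the two spatial derivatives, and keeping track of index placements (recalling that the Euclidean metric makes raising and lowering immaterial, so $\partial_j p=-\DT v^j$ may be contracted freely). An alternative but more laborious route would be to apply $\DT^2$ directly to $-\Delta p=\partial_i v^j\partial_j v^i$ from \eqref{eqlaplace p} and expand the commutator $[\DT^2,\Delta]$; the route through $\divergence\DT^3 v$ avoids this entirely and reuses Lemma~\ref{le3} verbatim, so that is the one I would carry out.
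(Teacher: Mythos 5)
Your proposal is correct and follows essentially the same route as the paper's proof: commuting $\DT^2$ past the inner gradient via \eqref{eqDtlnablap}, using \eqref{eqEuler1} to turn $\DT^{2}\nabla p$ into $-\DT^{3}v$, and then substituting the expression for $\divergence\DT^{3}v$ from Lemma \ref{le3}. The only difference is cosmetic (componentwise $\partial_j\partial_j$ bookkeeping versus the paper's $\divergence\nabla$ notation), so no further comment is needed.
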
 
\begin{proof}  
This result is obtained by applying \eqref{eqEuler1}, \eqref{eqDtlnablap}, and Lemma \ref{le3}: 
\begin{align*}
-\Delta \DT^{2}p={}&-\divergence\nabla\DT^2p\\
={}&-\divergence\DT^2\nabla p-\divergence[\nabla,\DT^2]p\\
={}&\divergence \DT^3 v+\sum_j\partial_j [\DT^2,\partial_j]p\\
={}&4\partial_i v^j \partial_j\DT^2 v^i+3\partial_i\DT v^j \partial_j\DT v^i-12\partial_i v^l\partial_l v^j \partial_j\DT v^i+6\partial_iv^l\partial_l v^j\partial_jv^k\partial_kv^i\\
&+\sum_j\partial_j \Paren{2\partial_j v_i\DT^2 v^i+\partial_j \DT v_i\DT v^i}.\qedhere
\end{align*} 
\end{proof} 

To handle the boundary energy, it is necessary to track the evolution of the pressure on the free boundary and isolate the error terms. 
\begin{lemma}\label{le5}
On the free boundary $\parOmega_t$, we have  
\begin{equation*}
\DT^{2} p= -\LB (\DT v\cdot\normal)+\error,
\end{equation*}
with the error terms
\begin{equation*}
\error 
=-\module{\sff}^2 \DT v\cdot\normal+\Bdnabla p\cdot\DT v +\Bdnabla^2v\tencon\Bdnabla v\tencon\normal+\Bdnabla v\tencon\Bdnabla v\tencon \sff. 
\end{equation*} 
\end{lemma}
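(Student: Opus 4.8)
The plan is to start from the boundary condition $p = \mc$ on $\parOmega_t$ (the first equation of \eqref{eqEuler3}) and differentiate it twice along the material derivative $\DT$, then use the evolution equation for the mean curvature from Lemma \ref{le1}, namely $\DT \mc = -\LB v_n - \module{\sff}^2 v_n + \Bdnabla \mc \cdot v$, to convert curvature time-derivatives into tangential-Laplacian terms acting on $v_n$. The key reduction I expect to need is that on the boundary $v_n = v\cdot\normal$, so $\DT(v_n)$ and $\DT v \cdot \normal$ differ only by commutator/error terms: using $\DT\normal = -(\Bdnabla v)^\top \normal$ from Lemma \ref{le1}, we get $\DT(v\cdot\normal) = \DT v\cdot\normal - v\cdot(\Bdnabla v)^\top\normal = \DT v\cdot\normal - (\Bdnabla v\, v)\cdot\normal$, and the second term is of the form $\Bdnabla v \tencon v$, which is absorbable into $\error$. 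Thus the leading term $-\LB(\DT v\cdot\normal)$ emerges from $-\LB$ applied to $\DT(v\cdot\normal)$ up to commuting $\LB$ with this correction, which again produces $\sff$- and $\Bdnabla$-type error terms.

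Concretely, the steps I would carry out are: (i) write $\DT^2 p = \DT^2 \mc$ on $\parOmega_t$; (ii) apply $\DT\mc = -\LB v_n - \module{\sff}^2 v_n + \Bdnabla\mc\cdot v$ once, then apply $\DT$ again, producing $\DT^2\mc = -\DT(\LB v_n) - \DT(\module{\sff}^2 v_n) + \DT(\Bdnabla\mc\cdot v)$; (iii) commute $\DT$ past $\LB = \Bddiv\Bdnabla$ using the commutators $[\DT,\Bdnabla]f = -(\Bdnabla v)^\top\Bdnabla f$ and $[\DT,\Bdnabla^2]f = \Bdnabla v\tencon\Bdnabla^2 f + \Bdnabla^2 v\tencon\Bdnabla f$ from Lemma \ref{le1} (together with the evolution of the tangential metric, which contributes $\sff$-terms through $\DT\sff$), to trade $\DT\LB v_n$ for $\LB \DT v_n$ plus terms of the schematic type $\Bdnabla^2 v\tencon\Bdnabla v$ and $\Bdnabla v\tencon\Bdnabla v\tencon\sff$; (iv) replace $\DT v_n = \DT(v\cdot\normal)$ by $\DT v\cdot\normal$ modulo $\Bdnabla v\tencon v$ as above, and commute $\LB$ past that correction; (v) handle the lower-order pieces: $\DT(\module{\sff}^2 v_n)$ expands via $\DT\sff_{ij} = -\Bdnabla_j\Bdnabla_i v^k\normal_k - \Bdnabla_i v^k\sff_{kj} - \Bdnabla_j v^k\sff_{ik}$ into $\module{\sff}^2\DT v\cdot\normal$ (the term kept in $\error$) plus $\Bdnabla^2 v\tencon\Bdnabla v\tencon\normal$ and $\Bdnabla v\tencon\Bdnabla v\tencon\sff$ contributions, while $\DT(\Bdnabla\mc\cdot v)$ is rewritten using $\mc = p$ on the boundary as $\Bdnabla p\cdot\DT v$ plus $\Bdnabla v$-type errors; (vi) collect everything, matching the four listed error terms $-\module{\sff}^2\DT v\cdot\normal$, $\Bdnabla p\cdot\DT v$, $\Bdnabla^2 v\tencon\Bdnabla v\tencon\normal$, and $\Bdnabla v\tencon\Bdnabla v\tencon\sff$.

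The main obstacle is bookkeeping in step (iii)–(v): one must be careful that commuting $\DT$ through the tangential derivatives is done intrinsically (the tangential gradient depends on $\normal$, which itself evolves), so spurious normal-derivative terms and metric-variation terms appear and must be verified to be of the schematic form collected in $\error$ — in particular confirming that no uncontrolled term like $\Bdnabla^3 v$ or $\Bdnabla^2\mc$ survives, and that all surviving terms genuinely match the stated four types up to constant-coefficient contractions. A secondary subtlety is that the identity is asserted pointwise on $\parOmega_t$, so every use of $p = \mc$, $v_n = v\cdot\normal$ must be a boundary identity, and one should only differentiate these tangentially or along $\DT$ (which preserves the boundary), never in the full normal direction. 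Once the schematic reductions are set up, the remaining computation is routine tensor algebra of the same flavor as the proof of Lemma \ref{le3}.
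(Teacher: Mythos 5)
Your overall strategy --- apply $\DT$ twice to the boundary identity $p=\mc$ and use the evolution formulas of Lemma \ref{le1} --- is the same as the paper's, but the execution you describe has a genuine gap. The lemma asserts an \emph{exact} formula in which $\error$ consists only of the four types $-\module{\sff}^2\DT v\cdot\normal$, $\Bdnabla p\cdot\DT v$, $\Bdnabla^2v\tencon\Bdnabla v\tencon\normal$, $\Bdnabla v\tencon\Bdnabla v\tencon\sff$; in particular it contains no term with an undifferentiated factor $v$, no $\Bdnabla\sff$, no $\Bdnabla^2\mc$, and no third derivatives of $v$. If, as in your steps (ii)--(v), you differentiate $\DT\mc=-\LB v_n-\module{\sff}^2v_n+\Bdnabla\mc\cdot v$ term by term, such terms \emph{do} appear: $\DT(\Bdnabla\mc\cdot v)$ contributes $v\cdot\Bdnabla\DT\mc$, which by the same evolution formula contains $v\tencon\Bdnabla\LB v_n$ (hence $v\tencon\Bdnabla^3v\tencon\normal$ and $v\tencon\Bdnabla v\tencon\Bdnabla\sff$) as well as $v\tencon v\tencon\Bdnabla^2\mc$; $\DT\Paren{\module{\sff}^2v_n}$ contributes $v_n\,\sff\tencon\Bdnabla^2v\tencon\normal$; replacing $\DT v_n$ by $\DT v\cdot\normal$ leaves $v^i\Bdnabla_iv^j\normal_j$, and applying $-\LB$ to it produces $v\tencon\Bdnabla^3v\tencon\normal$ and $v\tencon\Bdnabla v\tencon\LB\normal$; and $[\DT,\LB]v_n$ contains $\Bdnabla v\tencon v\tencon\Bdnabla\sff$ because $\Bdnabla^2v_n$ contains $v\tencon\Bdnabla\sff$. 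None of these is ``absorbable into $\error$'' as you claim (your assertion that the $v\cdot\DT\normal$ correction and the remainder of $\DT(\Bdnabla\mc\cdot v)$ are of $\Bdnabla v$-type is false); for the stated formula to hold they must cancel identically, and verifying that cancellation --- which also drags in surface curvature commutators when exchanging $\Bdnabla$ and $\LB$ --- is the entire difficulty, not routine bookkeeping.

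The missing idea is to perform this cancellation once, at the level of the \emph{first} derivative, before differentiating again: the paper uses $\LB\normal=-\module{\sff}^2\normal+\Bdnabla\mc$ to write $\DT p=-\LB v_n-\module{\sff}^2v_n+\Bdnabla\mc\cdot v=-\LB v_n+\LB\normal\cdot v=-\LB v\cdot\normal-2\,\sff:\Bdnabla v$, an expression free of undifferentiated $v$, of $\mc$, and of derivatives of $\sff$. Applying $\DT$ to this form, using $\DT\normal$, $\DT\sff$, $[\DT,\Bdnabla]$ and $[\DT,\Bdnabla^2]$ from Lemma \ref{le1}, produces only the four stated error types; the leading term $-\LB(\DT v\cdot\normal)$ and the terms $-\module{\sff}^2\DT v\cdot\normal+\Bdnabla p\cdot\DT v$ are then reconstituted by using the same identity $\LB\normal=-\module{\sff}^2\normal+\Bdnabla\mc$ together with $p=\mc$ on $\parOmega_t$. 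Without this reorganization (or an explicit verification that all of the transport and curvature-derivative terms listed above cancel), your plan does not establish the exact form of $\error$.
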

\begin{proof}
From boundary condition \eqref{eqEuler3}, $\sff_{ij}=\Bdnabla_j\normal_i$ and the identities 
\begin{equation}\label{eqDtmc} 
\DT \mc =-\LB v_n-\module{\sff}^2v_n+\Bdnabla \mc\cdot v,\quad \LB \normal =-\module{\sff}^2\normal+\Bdnabla \mc,\quad 
\end{equation}
in Lemma \ref{le1}, it follows that
\begin{align*}
\DT p&=-\LB v_n-\module{\sff}^2v_n+\Bdnabla \mc\cdot v\\
&=-\LB v_n+\LB \normal\cdot v\\
&=-\LB v\cdot \normal-2 \sff:\Bdnabla v,
\end{align*}
where ``$:$'' indicates double contraction, i.e., $A:B=\sum_{i,j}A_{ij}B_{ij}$ for the matrices $A$ and $B$. 
We then differentiate to obtain 
\begin{equation*}
\DT^2p=-\DT\LB v\cdot \normal -\LB v\cdot \DT\normal-2\DT \sff:\Bdnabla v-2 \sff:\DT\Bdnabla v.
\end{equation*} 
From the formulas for $\DT \normal$, $\DT \sff$, $[\DT, \Bdnabla]$, and $[\DT, \Bdnabla^2]$ in Lemma \ref{le1}, along with $\sff_{ij} = \Bdnabla_j \normal_i$, it follows that 
\begin{align*}
\DT^2p={}&-\LB \DT v^i\normal_i-[\DT,\LB] v^i\normal_i-\LB v^i\DT\normal_i-2\DT \sff_{ij}\Bdnabla_jv^i\\
&-2 \sum_j\sff_{ij} \Bdnabla_j \DT v^i-2\sum_j \sff_{ij}: [\DT,\Bdnabla_j] v^i\\ 
={}&-\sum_j\Bdnabla_j\Bdnabla_j \DT v^i\normal_i-2\sum_j\Bdnabla_j \DT v^i \Bdnabla_j\normal_i-[\DT,\LB] v^i\normal_i+\LB v^i\Bdnabla_i v^j \normal_j\\
&+2\sum_j\Paren{\Bdnabla_j\Bdnabla_iv^k \normal_k+\Bdnabla_iv^k \sff_{kj}+\Bdnabla_j v^k\sff_{ik}}\Bdnabla_jv^i+2\sum_j\sff_{ij}\Bdnabla_jv^k\Bdnabla_kv^i\\ 
={}&-\LB\Paren{\DT v^i\normal_i}+\DT v^i\LB\normal_i+\Bdnabla^2v\tencon\Bdnabla v\tencon\normal+\LB v^i\Bdnabla_i v^j \normal_j\\ 
&+2\sum_j\Bdnabla_j\Bdnabla_iv^k \Bdnabla_jv^i\normal_k+2\sum_j\Bdnabla_j v^k\Bdnabla_jv^i\sff_{ik}+4\sum_j\Bdnabla_jv^k\Bdnabla_kv^i\sff_{ij}\\ 
={}&-\LB\Paren{\DT v\cdot\normal}+\DT v^i\Paren{-\module{\sff}^2\normal_i+\Bdnabla_i\mc}+\Bdnabla^2v\tencon\Bdnabla v\tencon\normal+\Bdnabla v\tencon\Bdnabla v\tencon \sff,
\end{align*}
where \eqref{eqDtmc} has been used in the last step, and the proof is complete. 
\end{proof}  

For $u \in L^2(\parOmega)$, we define $u \in H^{\frac{1}{2}}(\parOmega)$ if 
\begin{equation*}
\norm{u}_{H^{\frac 12}(\parOmega)}\coloneqq \norm{u}_{L^2(\parOmega)}+\inf \Brace{\norm{\nabla w}_{L^2(\Omega)}: w\in H^1(\Omega) \text{ and } w|_{\parOmega}=u}<\infty.
\end{equation*} 
It follows immediately from the definition that, for any $u\in H^1(\Omega)$,
\begin{equation}\label{eqH1/2}
\norm{u}_{H^{\frac 12}(\parOmega)}\le \norm{u}_{L^2(\parOmega)}+\norm{\nabla u}_{L^2(\Omega)}.
\end{equation}

The relationship between the regularity of the mean curvature and that of the second fundamental form is expressed as follows: 
\begin{lemma}\label{lecurvaturebound}
Let $\Omega \subset \mathbb{R}^3$ be a bounded domain with $C^{1,\alpha}$ boundary ($\alpha \in (0,1)$). Then, the second fundamental form $\sff$ and mean curvature $\mc$ satisfy 
\begin{equation*}
\norm{\sff}_{L^p(\parOmega)} \le C \Paren{1+\norm{\mc}_{L^p(\parOmega)}},\quad p \in (1, \infty).
\end{equation*}
If, in addition, $\norm{\sff}_{L^4(\parOmega)} \le C$ for some positive constant $C$, then for $k \in\Brace{\frac{1}{2}, 1, \frac{3}{2}, 2}$, we have
\begin{equation*}
\norm{\sff}_{H^{k}(\parOmega)} \le C \Paren{1+\norm{\mc}_{H^{k}(\parOmega)}}.
\end{equation*}  
\end{lemma}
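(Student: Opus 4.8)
The plan is to localize $\parOmega$ into graph charts and reduce both bounds to interior elliptic regularity for the prescribed--mean--curvature equation. First I would cover the closed surface by finitely many charts in which, after a rigid motion, $\parOmega=\Brace{x_3=\phi(y):y\in B}$ for a fixed disc $B\subset\mathbb{R}^2$ and a $C^{1,\alpha}$ function $\phi$, with the number of charts and a bound for $\norm{\phi}_{C^{1,\alpha}(B)}$ depending only on the $C^{1,\alpha}$--character of $\parOmega$. In such a chart the unit normal is $\normal=(-\nabla\phi,1)/\sqrt{1+\module{\nabla\phi}^2}$, each entry of $\sff=\Bdnabla\normal$ has the form $M(\nabla\phi)\tencon\nabla^2\phi$ with $M$ smooth and $\module{M},\module{M^{-1}}$ bounded in terms of $\norm{\nabla\phi}_{L^\infty}$, and $\mc=\divergence\Paren{\nabla\phi/\sqrt{1+\module{\nabla\phi}^2}}$; hence $\phi$ solves the uniformly elliptic quasilinear equation
\begin{equation*}
	a^{ij}(\nabla\phi)\,\partial_{ij}\phi=F,\qquad a^{ij}(\xi)=\delta^{ij}-\frac{\xi_i\xi_j}{1+\module{\xi}^2},\qquad F\coloneqq\sqrt{1+\module{\nabla\phi}^2}\;\mc,
\end{equation*}
with ellipticity and $\norm{a^{ij}(\nabla\phi)}_{C^{0,\alpha}}$ controlled by $\norm{\nabla\phi}_{C^{0,\alpha}}$, and $\norm{F}_{L^p(B)}\le C\norm{\mc}_{L^p}$, $\norm{\phi}_{W^{1,p}(B)}\le C$ from the $C^{1,\alpha}$--bound.

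For the $L^p$ bound, since $a^{ij}(\nabla\phi)$ is continuous, hence of vanishing mean oscillation, the Calder\'on--Zygmund $W^{2,p}$ theory for elliptic equations with VMO coefficients gives, on a slightly smaller disc $B'\Subset B$,
\begin{equation*}
	\norm{\nabla^2\phi}_{L^p(B')}\le C\Paren{\norm{F}_{L^p(B)}+\norm{\phi}_{W^{1,p}(B)}}\le C\Paren{1+\norm{\mc}_{L^p}};
\end{equation*}
summing over the finite atlas and using $\module{\sff}\lesssim 1+\module{\nabla^2\phi}$ in each chart yields $\norm{\sff}_{L^p(\parOmega)}\le C(1+\norm{\mc}_{L^p(\parOmega)})$.

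For the Sobolev bounds I would first observe that $\norm{\sff}_{L^4(\parOmega)}\le C$ is equivalent, chartwise, to $\norm{\nabla^2\phi}_{L^4(B)}\le C$, so $\nabla\phi\in W^{1,4}(B)\hookrightarrow C^{0,1/2}(B)$ with controlled norm; hence one may take $\alpha=\tfrac12$ with $\norm{\nabla\phi}_{C^{0,1/2}}$ bounded. Then I would bootstrap along $k=\tfrac12,1,\tfrac32,2$, using at each level the local elliptic estimate $\norm{\phi}_{H^{k+2}(B')}\le C(\norm{F}_{H^{k}(B)}+\norm{\phi}_{L^2(B)})$ --- obtained by freezing $a^{ij}(\nabla\phi)$ at a point, absorbing the error $(a^{ij}(\nabla\phi)-a^{ij}(\nabla\phi(y_0)))\partial_{ij}\phi$ on a small disc where the oscillation of $\nabla\phi$ is small, and controlling lower-order terms by interpolation --- combined with the multiplier bound $\norm{F}_{H^k}=\norm{\sqrt{1+\module{\nabla\phi}^2}\,\mc}_{H^k}\le C\norm{\mc}_{H^k}$. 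At $k=\tfrac12$ the latter holds because $\sqrt{1+\module{\nabla\phi}^2}\in W^{1,4}\cap L^\infty$, which multiplies $H^{1/2}(\mathbb{R}^2)$ via fractional Leibniz together with $H^{1/2}(\mathbb{R}^2)\hookrightarrow L^4$ and $\norm{\nabla^2\phi}_{L^4}\le C$; this gives $\norm{\sff}_{H^{1/2}}\le C(1+\norm{\mc}_{H^{1/2}})$, hence $\phi\in H^{5/2}(B')$ and $\nabla\phi\in H^{3/2}(\mathbb{R}^2)$. From here the remaining rungs are routine, since $H^s(\mathbb{R}^2)$ is a Banach algebra for every $s>1$: $\nabla\phi\in H^{3/2}$ forces $a^{ij}(\nabla\phi),\sqrt{1+\module{\nabla\phi}^2}\in H^{3/2}$, giving the $k=1$ bound and $\nabla\phi\in H^2$, and iterating with $\nabla\phi\in H^2$ then $\nabla\phi\in H^{5/2}$ gives $k=\tfrac32$ and $k=2$; summing over charts completes the proof.

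The hard part will be exactly the base rung $k=\tfrac12$: there the induced geometry is only $C^{1,1/2}$, so the coefficients $a^{ij}(\nabla\phi)$ lie in no Sobolev algebra and the freeze--and--absorb scheme must be carried out with genuinely fractional product and commutator estimates in $H^{1/2}(\mathbb{R}^2)$ --- this is precisely where $\norm{\sff}_{L^4}\le C$ is used, both to make the frozen-coefficient error small on small discs and to make $\sqrt{1+\module{\nabla\phi}^2}$ an $H^{1/2}$-multiplier. Once $k=\tfrac12$ is settled the higher rungs follow mechanically, and the only remaining bookkeeping is to keep every constant dependent solely on the $C^{1,\alpha}$-character of $\parOmega$ (equivalently, after the upgrade, on $\norm{\sff}_{L^4(\parOmega)}$). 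One could instead argue intrinsically, using that the Codazzi--Mainardi equations make $\sff$ a Codazzi tensor with $\divergence_{\parOmega}\sff=\Bdnabla\mc$ and vanishing antisymmetric part, so $\sff$ is recovered from $\mc$ by a first-order elliptic div--curl system on the closed surface; but the graph formulation is cleaner here because it keeps the regularity of the coefficients explicit.
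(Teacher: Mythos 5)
Your plan is correct in outline, but it is a genuinely different route from the paper's: the paper does not prove this lemma internally at all --- it cites \cite[Proposition 2.12]{Julin2024} for the $L^p$ bound and for $k\in\Brace{\frac12,1,2}$, and disposes of $k=\frac32$ by the interpolation $H^{3/2}=[H^1,H^2]_{1/2}$. What you propose is essentially a self-contained reconstruction of the cited result: localization to $C^{1,\alpha}$ graph charts, the non-divergence prescribed-mean-curvature equation $a^{ij}(\nabla\phi)\partial_{ij}\phi=\sqrt{1+\module{\nabla\phi}^2}\,\mc$, Calder\'on--Zygmund $W^{2,p}$ theory with H\"older/VMO coefficients for the $L^p$ bound, and a freeze-and-absorb bootstrap through $k=\frac12,1,\frac32,2$ once $\norm{\sff}_{L^4}\le C$ upgrades the charts to $W^{2,4}\hookrightarrow C^{1,1/2}$. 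Your diagnosis of the base rung $k=\frac12$ as the only delicate step is accurate, and the ingredients you name are the right ones: the top-order error is absorbed via the smallness of $\norm{\nabla\phi-\nabla\phi(y_0)}_{L^\infty}$ on small discs, while the fractional-Leibniz remainder is merely bounded (not small) thanks to $\norm{\nabla^2\phi}_{L^4}\le C$ and $H^{1/2}(\mathbb{R}^2)\hookrightarrow L^4$; the higher rungs then follow from the algebra property of $H^s(\mathbb{R}^2)$, $s>1$. What the paper's route buys is brevity; what yours buys is self-containedness, explicit constant dependence (only on the $C^{1,\alpha}$ character and the $L^4$ bound of $\sff$), and a direct treatment of $k=\frac32$, which is arguably cleaner since the estimate is affine rather than linear in $\mc$, so naively interpolating the $k=1$ and $k=2$ bounds yields $\Paren{1+\norm{\mc}_{H^1}}^{1/2}\Paren{1+\norm{\mc}_{H^2}}^{1/2}$ rather than literally $1+\norm{\mc}_{H^{3/2}}$. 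The remaining work in your sketch is real but routine: the equivalence of the intrinsic $H^k(\parOmega)$ norms with the chartwise flat norms at each rung (which your bootstrap itself supplies the chart regularity for), and the cutoff/commutator bookkeeping in the fractional interior estimate.
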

\begin{proof}
The $L^p$ estimate and the $H^k$ estimate for $k = \frac{1}{2}, 1, 2$ can be found in \cite[Proposition 2.12]{Julin2024}, while the case for $k = \frac{3}{2}$ can be obtained via the Sobolev interpolation $H^{\frac{3}{2}} = [H^1, H^2]_{\frac{1}{2}}$. 
\end{proof}

The regularity of the free boundary is fully controlled by its mean curvature through elliptic regularity:
\begin{lemma}\label{leboundary regularity}
Let $\Omega \subset \mathbb{R}^3$ be a domain with $\parOmega \in H^{s_0}$ for some $s_0 > 2$. If the mean curvature satisfies
\begin{equation*}
\norm{\mc}_{H^{s-2}(\parOmega)}<\infty\ \text{for}\ s> s_0,
\end{equation*} 
then the boundary regularity lifts to $\parOmega\in H^s$.
\end{lemma}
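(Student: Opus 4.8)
\textbf{Proof plan for Lemma~\ref{leboundary regularity}.}
The plan is to parametrize $\parOmega$ locally as a graph and interpret the mean curvature equation as a quasilinear elliptic PDE for the graph function, then run a standard bootstrap using Schauder/Sobolev elliptic regularity. First I would cover $\parOmega$ by finitely many coordinate patches in which, after a rigid motion, the boundary is the graph $x_3 = \phi(x_1,x_2)$ of a function $\phi$ defined on a ball $B \subset \mathbb{R}^2$; since $\parOmega \in H^{s_0}$ with $s_0 > 2$, Sobolev embedding in two dimensions gives $\phi \in C^{1,\beta}(B)$ for some $\beta \in (0,1)$, so the coefficients below are at least continuous. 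In these coordinates the mean curvature is
\begin{equation*}
	\mc = \operatorname{div}\!\left( \frac{\nabla \phi}{\sqrt{1+|\nabla\phi|^2}} \right)
	= \frac{1}{\sqrt{1+|\nabla\phi|^2}}\left( \delta^{ij} - \frac{\partial_i\phi\,\partial_j\phi}{1+|\nabla\phi|^2} \right)\partial^2_{ij}\phi,
\end{equation*}
which is a uniformly elliptic second-order operator $a^{ij}(\nabla\phi)\,\partial^2_{ij}\phi = \mc$ whose coefficients $a^{ij}$ depend smoothly on $\nabla\phi$ and hence inherit the regularity of $\nabla\phi$.

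The core is then a bootstrap. Suppose at some stage $\phi \in H^{\sigma}(B')$ on a slightly smaller ball, with $2 < \sigma < s$ and $\sigma \le s_0$ initially. The coefficients $a^{ij}(\nabla\phi)$ then lie in $H^{\sigma-1}$, which for $\sigma > 2$ is a multiplication algebra on $\mathbb{R}^2$ (indeed $H^{\sigma-1}(\mathbb{R}^2)$ is an algebra once $\sigma - 1 > 1$), so the product $a^{ij}\,\partial^2_{ij}\phi$ and, more to the point, the frozen-coefficient elliptic estimate, are available: from $a^{ij}\partial^2_{ij}\phi = \mc \in H^{s-2}$ and interior elliptic regularity for the operator with $H^{\sigma-1}$ coefficients, one gains two derivatives up to the regularity the coefficients permit, i.e.\ $\phi \in H^{\min(\sigma+1,\,s)}(B'')$ on a still smaller ball (the gain is capped because the coefficients only have $\sigma-1$ derivatives; writing $\mc - (a^{ij}(\nabla\phi)-a^{ij}(0))\partial^2_{ij}\phi$ as the right-hand side for the constant-coefficient Laplacian-type operator, the second term lies in $H^{\sigma-1}$). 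Iterating, after finitely many steps $\phi$ reaches $H^{s-\text{(small)}}$ and then a final step closes it at $H^{s}$; covering $\parOmega$ by the finitely many patches and using a partition of unity upgrades this to $\parOmega \in H^{s}$. Standard references for the elliptic step with Sobolev coefficients are the theory in Gilbarg–Trudinger together with the product estimates for $H^\sigma$ spaces.

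The main obstacle is bookkeeping the interplay between the regularity of the coefficients and the gain in the elliptic estimate so that each iteration is legitimate — concretely, one must verify at every stage that $a^{ij}(\nabla\phi) - a^{ij}(0)$ multiplies $H^{\sigma-1}$ into itself (which needs $\sigma - 1 > 1$, guaranteed by $\sigma > 2$, maintained throughout since we only ever increase $\sigma$) and that the composition $\nabla\phi \mapsto a^{ij}(\nabla\phi)$ preserves $H^{\sigma-1}$ (a Moser-type composition estimate, valid because $a^{ij}$ is smooth and $H^{\sigma-1} \hookrightarrow L^\infty$ for $\sigma > 2$). A secondary point is that the gain must be truncated at $s$: if $s_0$ and $s$ are such that $\sigma + 1$ would overshoot $s$, one simply runs the last step with target exactly $s$, using $\mc \in H^{s-2}$ on the nose; the hypothesis $s > s_0 \ge 2$ ensures $s - 2 > 0$ so the constant-coefficient estimate indeed delivers $H^s$. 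No genuine analytic difficulty arises beyond these routine but careful verifications.
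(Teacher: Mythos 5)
The paper itself gives no argument here: it simply cites \cite[Proposition A.2]{Shatah2008a}, whose proof is indeed the local-graph, quasilinear-elliptic bootstrap you outline, so your overall strategy is the right one and matches the cited source. However, the step on which your whole iteration rests is justified incorrectly. You claim that, writing $a^{ij}(0)\partial^2_{ij}\phi=\mc-\bigl(a^{ij}(\nabla\phi)-a^{ij}(0)\bigr)\partial^2_{ij}\phi$, ``the second term lies in $H^{\sigma-1}$.'' It does not: $a^{ij}(\nabla\phi)-a^{ij}(0)\in H^{\sigma-1}$ and $\partial^2_{ij}\phi\in H^{\sigma-2}$, and multiplication by an $H^{\sigma-1}$ function (even though $H^{\sigma-1}(\mathbb{R}^2)$ is an algebra for $\sigma>2$) cannot raise the regularity of the other factor; the product is only in $H^{\sigma-2}$. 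Consequently the constant-coefficient elliptic estimate returns $\phi\in H^{\sigma}_{\mathrm{loc}}$ --- exactly what you assumed --- and the bootstrap as you justified it never advances past the first step.

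The gain $\phi\in H^{\min(\sigma+1,s)}$ per step is nevertheless the correct (and standard) statement, but it requires a genuine rough-coefficient elliptic regularity input rather than the naive freezing argument. Two standard repairs: (i) work with the divergence form $\operatorname{div}\bigl(F(\nabla\phi)\bigr)=\mc$ and differentiate once, so that $u=\partial_k\phi$ solves the \emph{linear} divergence-form equation $\operatorname{div}\bigl(DF(\nabla\phi)\nabla u\bigr)=\partial_k\mc$ with coefficients $DF(\nabla\phi)\in H^{\sigma-1}\cap C^0$; elliptic regularity for divergence-form operators with coefficients in an algebra above $L^\infty$ then gives $u\in H^{\min(s-1,\sigma)}_{\mathrm{loc}}$, i.e.\ the claimed gain capped by the coefficient regularity; or (ii) invoke directly a theorem on interior regularity for nondivergence operators with $H^{\tau}$ ($\tau>d/2$) coefficients, whose proof uses smallness of the oscillation $\|a^{ij}(\nabla\phi)-a^{ij}(\nabla\phi(x_0))\|_{L^\infty}$ on small balls together with an a priori estimate on difference quotients or mollifications (or a paraproduct decomposition), not a one-line perturbation off the Laplacian. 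Two smaller points you should also address: the hypothesis $\norm{\mc}_{H^{s-2}(\parOmega)}<\infty$ must be given a meaning consistent with the charts when $s-2$ exceeds the regularity $s_0-1$ of the transition maps (this is why the iteration should be set up chart-by-chart, one derivative at a time), and the local interior gains on shrinking balls must be patched by a finite cover chosen at the outset so that the conclusion is a statement about all of $\parOmega$.
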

\begin{proof}
See \cite[Proposition A.2]{Shatah2008a}.
\end{proof}

This implies that for a time-dependent free boundary $\parOmega_t$, the uniform bound 
\begin{equation*}
\sup_{t \in I} \norm{\mc}_{H^{s-2}(\parOmega_t)} < \infty,\ \text{on the time interval}\ I,
\end{equation*} 
guarantees uniform control of the boundary regularity: $\parOmega_t \in H^s,\forall t\in I$, provided $\parOmega_t \in H^{s_0}$. 

The following div-curl estimates play a pivotal role in the subsequent section. 
\begin{lemma}
Let $\Omega \subset \mathbb{R}^3$ be a bounded domain with $\parOmega \in H^{2+\varepsilon}$ for some $\varepsilon > 0$ and $\norm{\sff}_{H^{\frac{3}{2}}(\parOmega)} \le C$ for some positive constant $C$. Then, for any smooth vector field $F$ and for $k \in \Brace{1, \frac{3}{2}, 3}$, the following holds:
\begin{equation}\label{eqdivcurlk}
\norm{F}_{H^{k}(\Omega)} \le C \Paren{\norm{F_n}_{H^{k-\frac 12}(\parOmega)}+\norm{F}_{L^2(\Omega)}+\norm{\divergence F}_{H^{k-1}(\Omega)}+\norm{\vorticity F}_{H^{k-1}(\Omega)}},
\end{equation} 
where $F_n = F \cdot \normal$. 
\end{lemma}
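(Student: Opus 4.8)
The plan is to establish the div-curl inequality \eqref{eqdivcurlk} by the standard route of reducing the full-space (Hodge-type) estimate to a boundary-corrector argument, with the key technical point being that the boundary $\parOmega$ only has $H^{2+\varepsilon}$-regularity, so that the coefficients of the associated elliptic problems are merely in $H^{1+\varepsilon}$ and one must track carefully which Sobolev products remain bounded. First I would recall the classical identity $-\Delta F = \vorticity\Paren{\vorticity F} - \nabla\Paren{\divergence F}$ (componentwise $-\Delta F^i = \partial_j(\vorticity F)_{ij} - \partial_i(\divergence F)$ appropriately contracted), which produces a genuine elliptic system for $F$ with right-hand side involving one derivative of $\divergence F$ and $\vorticity F$. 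The task is then to combine interior elliptic regularity with boundary elliptic regularity given only the single scalar boundary datum $F_n$.

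The cleanest approach is to split $F = \nabla\phi + w$, where $\phi$ solves the Neumann problem $\Delta\phi = \divergence F$ in $\Omega$ with $\partial_\normal\phi = F_n$ on $\parOmega$, normalized by $\int_\Omega\phi = 0$. Then $w = F - \nabla\phi$ satisfies $\divergence w = 0$, $\vorticity w = \vorticity F$, and $w_n = 0$ on $\parOmega$, which is exactly the setting of the classical Hodge estimate (tangential vorticity, zero normal trace, divergence-free). For the potential part, elliptic regularity for the Neumann problem on an $H^{2+\varepsilon}$ domain gives $\norm{\nabla\phi}_{H^k(\Omega)} \le C(\norm{\divergence F}_{H^{k-1}(\Omega)} + \norm{F_n}_{H^{k-1/2}(\parOmega)} + \norm{\nabla\phi}_{L^2(\Omega)})$ for $k \in \{1, \tfrac32\}$, and a compactness/Poincaré argument (using $\int_\Omega \phi = 0$ and the solvability condition $\int_\Omega\divergence F = \int_{\parOmega}F_n$) absorbs the $L^2$ term back into $\norm{F}_{L^2(\Omega)}$ and the data. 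The $k=3$ case needs $H^{5/2}$ boundary regularity in the classical theory, so here one instead invokes Lemma~\ref{leboundary regularity}: the hypothesis $\norm{\sff}_{H^{3/2}(\parOmega)} \le C$ together with Lemma~\ref{lecurvaturebound} (run backwards) controls $\norm{\mc}_{H^{3/2}(\parOmega)}$, hence by Lemma~\ref{leboundary regularity} the boundary is actually in $H^{7/2}$, which is more than enough regularity to run the $k=3$ elliptic estimates; the $H^{3/2}$ control on $\sff$ is precisely what the paper carries along to make this bootstrap legitimate. For the divergence-free part $w$, one uses the standard vector-potential or reflection argument: since $\Omega$ need not be simply connected there may be a finite-dimensional harmonic-field (cohomology) obstruction, but those harmonic fields are smooth and their contribution is controlled by $\norm{w}_{L^2} \le \norm{F}_{L^2} + \norm{\nabla\phi}_{L^2}$, so they are harmless for the estimate; the non-harmonic part obeys $\norm{w}_{H^k(\Omega)} \le C(\norm{\vorticity F}_{H^{k-1}(\Omega)} + \norm{w}_{L^2(\Omega)})$ by elliptic regularity for the $\vorticity$-$\divergence$-trace system with the same boundary-regularity input as above.

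The main obstacle — the step I expect to require the most care — is verifying that the elliptic regularity constants in both the Neumann problem and the Hodge system depend only on $\norm{\sff}_{H^{3/2}(\parOmega)}$ (equivalently, on a uniform $H^{2+\varepsilon}$-plus-curvature bound for $\parOmega$) and not on higher norms of the boundary parametrization. Concretely, when one localizes and flattens the boundary, the transformed operator has leading coefficients built from the boundary chart, which lie in $H^{1+\varepsilon}(\mathbb{R}^2)$; one must check that $H^{1+\varepsilon} \cdot H^{k-1}$-type products and the coefficient–solution pairings appearing in the commutators stay in $H^{k-1}$ for the relevant $k$, which is exactly where the condition $\varepsilon > 0$ (rather than $\varepsilon = 0$) is used, since $H^{1+\varepsilon}(\mathbb{R}^2)$ is an algebra and multiplies $H^s$ boundedly for $|s| \le 1+\varepsilon$. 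For $k = \tfrac32$ this is the delicate borderline case, and for $k=3$ it is precisely why one must first upgrade $\parOmega$ to $H^{7/2}$ via Lemmas~\ref{lecurvaturebound}–\ref{leboundary regularity} before the third-order estimate can even be stated. Once the coefficient bookkeeping is settled, assembling $\norm{F}_{H^k} \le \norm{\nabla\phi}_{H^k} + \norm{w}_{H^k}$ and collecting the right-hand sides yields \eqref{eqdivcurlk}; I would only sketch the localization and cite standard references (e.g. the div-curl lemmas used in the free-boundary Euler literature, or \cite{Shatah2008a}) for the routine parts.
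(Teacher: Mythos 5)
Your route is workable and correct in outline, but it is genuinely different from what the paper does. The paper's proof is essentially a citation-and-interpolation argument: the case $k=1$ is quoted directly from \cite[Theorem 3.6]{Julin2024}; for $k=3$ the hypothesis $\norm{\sff}_{H^{3/2}(\parOmega)}\le C$ is first upgraded via Lemma \ref{leboundary regularity} to $\parOmega\in H^{7/2}$ (exactly as you do), after which \cite[Theorem 3.1]{Julin2024} (built on \cite{Cheng_2016}) gives an $H^3$ bound with $\norm{\Bdnabla F_n}_{H^{3/2}(\parOmega)}$ and $\norm{F}_{L^\infty(\Omega)}$ on the right, the $L^\infty$ term being absorbed by the interpolation $\norm{F}_{L^\infty}\le \varepsilon\norm{F}_{H^3}+C_\varepsilon\norm{F}_{L^2}$; the fractional case $k=\tfrac32$ is then obtained by interpolating between the $k=1$ and $k=3$ estimates rather than proved directly. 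You instead decompose $F=\nabla\phi+w$ with $\phi$ solving a Neumann problem and $w$ divergence-free with $w\cdot\normal=0$, proving each $k$ separately. What your approach buys is a more self-contained argument (modulo standard elliptic theory on Sobolev-class domains) that handles $F_n$ through the Neumann problem and so never needs the $L^\infty$-absorption trick; what the paper's approach buys is brevity, the constant-tracking being delegated entirely to the cited theorems, and the avoidance of two points that your sketch leaves slightly exposed: (i) the direct $k=\tfrac32$ Neumann estimate $\phi\in H^{5/2}$ genuinely uses the curvature bound $\norm{\sff}_{H^{3/2}(\parOmega)}\le C$ (or the $H^{7/2}$ bootstrap), not merely $\parOmega\in H^{2+\varepsilon}$ with the $H^{1+\varepsilon}$-algebra bookkeeping, and (ii) on non-simply-connected domains the harmonic Neumann fields still require an $H^k$-by-$L^2$ bound with constants depending only on the admissible geometry --- this is exactly the zero-data instance of the Hodge estimate you cite, so it is not circular, but it should be stated for all of $w$ rather than only for its ``non-harmonic part.'' The paper sidesteps both issues by interpolating between $k=1$ and $k=3$, where the cited results already contain the geometry-only constants.
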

\begin{proof} 
The $H^{2+\varepsilon}$-regularity of the free boundary implies $C^{1,\alpha}$-regularity for some sufficiently small $\alpha= \alpha(\varepsilon) > 0$.

\textit{Case $k=1$}: Follows from \cite[Theorem 3.6]{Julin2024} via standard div-curl theory.

\textit{Case $k=3$}: By Lemma \ref{leboundary regularity}, $\norm{\sff}_{H^{\frac{3}{2}}(\parOmega)} \le C$ implies $\parOmega \in H^{7/2}$. Adapting \cite[Theorem 3.1]{Julin2024} (based on \cite[Theorem 1.3]{Cheng_2016}):
\begin{align*}
\norm{F}_{H^3(\Omega)}
\le{}& C\Big(\norm{\Bdnabla F_n}_{H^{\frac{3}{2}}(\parOmega)}+ (1+\norm{\sff}_{H^{\frac{3}{2}}(\parOmega)})\norm{F}_{L^{\infty}(\Omega)}\nonumber\\
&\qquad+\norm{\divergence F}_{H^{2}(\Omega)}+\norm{\vorticity F}_{H^{2}(\Omega)}\Big)\\
\le{}& C\Paren{\norm{F_n}_{H^{\frac{5}{2}}(\parOmega)}+ \norm{F}_{L^{\infty}(\Omega)}+\norm{\divergence F}_{H^{2}(\Omega)}+\norm{\vorticity F}_{H^{2}(\Omega)}}.
\end{align*}
By interpolation, we obtain the following estimate:
\begin{equation*}
\norm{F}_{L^\infty(\Omega)} \le \varepsilon \norm{F}_{H^3(\Omega)} + C_\varepsilon \norm{F}_{L^2(\Omega)},
\end{equation*}
for sufficiently small $\varepsilon > 0$. Therefore, we have 
\begin{equation*}
\norm{F}_{H^3(\Omega)}\le C\Paren{\norm{F_n}_{H^{\frac{5}{2}}(\parOmega)}+\norm{F}_{L^2(\Omega)}+\norm{\divergence F}_{H^{2}(\Omega)}+\norm{\vorticity F}_{H^{2}(\Omega)}}.
\end{equation*} 

\textit{Case $k=\frac{3}{2}$}: Follows from interpolation between $H^1$ and $H^3$ cases.
\end{proof}

Finally, we list the specific elliptic estimates that will be utilized in subsequent sections. 
\begin{lemma}\label{leellipticestimate} 
Let $\Omega \subset \mathbb{R}^3$ be a bounded domain with $\parOmega \in C^{1,\alpha}$ for some $\alpha \in (0,1)$, and $\norm{\sff}_{H^{\frac{1}{2}}(\parOmega)} \le C$ for some positive constant $C$. Let $u$ be the solution to the following Dirichlet problem:
\begin{equation*}
\begin{cases}
\Delta u=f, & \text{ in } \Omega,\\ 
u=0, & \text{ on } \parOmega.
\end{cases}
\end{equation*}
Then, it holds
\begin{equation*}
\norm{\partial_\normal u}_{H^1(\parOmega)}+\norm{\nabla u}_{H^{\frac{3}{2}}(\Omega)} \le C\norm{f}_{H^{\frac{1}{2}}(\Omega)},
\end{equation*}
where $\partial_\normal$ denotes the outer normal derivative.

Moreover, if the function $f$ can be expressed as $f = \divergence F$ for some vector field $F$, then we have
\begin{equation*}
\norm{u}_{H^1(\Omega)}\le C\norm{F}_{L^2(\Omega)}.
\end{equation*}
\end{lemma}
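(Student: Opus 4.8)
The plan is to treat the two assertions separately: the divergence-form bound follows from an elementary energy identity, while the sharper elliptic estimate requires a localization--straightening argument together with the low-regularity elliptic theory already invoked for \eqref{eqdivcurlk}.

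For the divergence-form claim, $\Delta u=\divergence F$ with $u|_{\parOmega}=0$ means $\int_\Omega\nabla u\cdot\nabla\phi\,dx=\int_\Omega F\cdot\nabla\phi\,dx$ for every $\phi\in H^1_0(\Omega)$; choosing $\phi=u$ gives $\norm{\nabla u}_{L^2(\Omega)}\le\norm{F}_{L^2(\Omega)}$, and since $u$ vanishes on $\parOmega$ the Poincaré inequality on the bounded domain $\Omega$ upgrades this to $\norm{u}_{H^1(\Omega)}\le C\norm{F}_{L^2(\Omega)}$. The same test against $u$ for a general right-hand side $f$ produces the base energy bound $\norm{\nabla u}_{L^2(\Omega)}\le C\norm{f}_{L^2(\Omega)}\le C\norm{f}_{H^{1/2}(\Omega)}$, which I will reuse below.

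For the first estimate, since $\parOmega\in C^{1,\alpha}$ I would cover $\parOmega$ by finitely many coordinate cylinders in which it is the graph of a $C^{1,\alpha}$ function $\eta$, the bound $\norm{\sff}_{H^{1/2}(\parOmega)}\le C$ supplying the missing top-order Sobolev regularity of $\eta$ (one derivative below the classical threshold). Flattening the boundary converts $\Delta u=f$ into a divergence-form equation $\partial_k(a^{k\ell}\partial_\ell\tilde u)=\tilde f$ on a half-ball with zero Dirichlet data on the flat face, whose leading coefficients $a^{k\ell}$ are algebraic functions of $\nabla\eta$ and hence lie in $C^{0,\alpha}\cap H^{3/2}$. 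I would then apply the elliptic regularity for Sobolev-class domains developed in \cite{Cheng_2016} and \cite{Julin2024} (the endpoint version being \cite[Theorem 3.1]{Julin2024}, itself resting on \cite[Theorem 1.3]{Cheng_2016}): the $C^{0,\alpha}$ control of the coefficients absorbs the lower-order commutator terms while their $H^{3/2}$ bound carries the top-order regularity, yielding $\tilde u\in H^{5/2}$ locally with norm bounded by $\norm{\tilde f}_{H^{1/2}}+\norm{\tilde u}_{L^2}$ of the half-ball. Patching these boundary pieces with interior $H^{5/2}$-regularity for $\Delta u=f$ and absorbing $\norm{u}_{L^2(\Omega)}$ via the energy bound gives $\norm{\nabla u}_{H^{3/2}(\Omega)}\le C\norm{f}_{H^{1/2}(\Omega)}$. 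For the Neumann trace, $\partial_\normal u=\nabla u\cdot\normal$ on $\parOmega$, the trace map sends $H^{3/2}(\Omega)\to H^1(\parOmega)$, and $\normal\in H^{3/2}(\parOmega)$ because $\Bdnabla\normal=\sff\in H^{1/2}(\parOmega)$; since multiplication by an $H^{3/2}$ function preserves $H^1$ on the two-dimensional surface $\parOmega$ (as $3/2>1=\dim\parOmega/2$), it follows that $\norm{\partial_\normal u}_{H^1(\parOmega)}\le C\norm{\nabla u}_{H^{3/2}(\Omega)}$, completing the proof.

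The main obstacle I anticipate is precisely this endpoint elliptic step: obtaining $u\in H^{5/2}(\Omega)$ from only $f\in H^{1/2}(\Omega)$ when $\parOmega$ is merely $C^{1,\alpha}$ with $\sff\in H^{1/2}$ --- one fewer classical boundary derivative than a naive count would demand. Because $C^{1,\alpha}$ is not a Sobolev hypothesis of order strictly above $2$, Lemma \ref{leboundary regularity} cannot be bootstrapped here (unlike in \eqref{eqdivcurlk}, where $\parOmega\in H^{2+\varepsilon}$ is assumed), so one must work directly with the rough-coefficient flattened equation and the paraproduct estimates of \cite{Cheng_2016,Julin2024}, which are precisely calibrated to the pair of hypotheses ``$C^{1,\alpha}$ plus $\sff\in H^{1/2}$''. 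The remaining ingredients --- the energy identity, Poincaré, interior regularity patching, and the trace and surface-multiplication estimates --- are routine.
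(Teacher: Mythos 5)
Your treatment of the divergence-form bound is exactly the paper's argument: integrate by parts (equivalently, test the weak formulation with $u$) to get $\norm{\nabla u}_{L^2(\Omega)}\le\norm{F}_{L^2(\Omega)}$ and conclude with Poincar\'e. For the first estimate, however, the paper does not give a proof at all: it quotes \cite[Proposition 3.8]{Julin2024}, which is stated under precisely the hypotheses of Lemma \ref{leellipticestimate} ($C^{1,\alpha}$ boundary together with $\norm{\sff}_{H^{1/2}(\parOmega)}\le C$) and delivers both $\norm{\nabla u}_{H^{3/2}(\Omega)}$ and $\norm{\partial_\normal u}_{H^1(\parOmega)}$. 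Your plan to reconstruct that result by covering $\parOmega$ with Sobolev-class graph charts, flattening, and invoking rough-coefficient elliptic regularity is the right strategy in outline, and it is essentially what underlies the cited proposition.

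As written, though, the reconstruction has gaps at exactly the endpoint steps where the hypotheses do the work. First, the results you lean on do not cover the step you need: \cite[Theorem 3.1]{Julin2024} and \cite[Theorem 1.3]{Cheng_2016} are div--curl estimates for vector fields with prescribed boundary trace, not an $H^{5/2}$ estimate for the scalar Dirichlet problem with $H^{1/2}$ data on a domain of this low regularity; the statement that does that job is Proposition 3.8 itself, so your argument is circular unless you actually carry out the paraproduct/commutator analysis for the flattened operator. In that analysis, the remark that the coefficients lie in $C^{0,\alpha}\cap H^{3/2}$ is not by itself enough: for $\alpha<1/2$ multiplication by a $C^{0,\alpha}$ function does not preserve $H^{1/2}$, so the top-order terms must be handled through the Sobolev regularity of the chart (i.e., through $\sff\in H^{1/2}$), which is the genuine content of the Cheng--Shkoller/Julin--La Manna machinery rather than a routine absorption. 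Second, the trace bound $\norm{\nabla u\cdot\normal}_{H^1(\parOmega)}\le C\norm{\nabla u}_{H^{3/2}(\Omega)}$ is the borderline case $H^{3/2}(\Omega)\to H^1(\parOmega)$, which is not available on general Lipschitz (or merely $C^{1,\alpha}$) domains and must itself be justified using the curvature bound, in the spirit of \eqref{eqH1/2} but one derivative higher; your surface product estimate for $\normal\in H^{3/2}(\parOmega)$ is fine, but the endpoint trace assertion needs proof. In short: for the second estimate you match the paper; for the first, either cite \cite[Proposition 3.8]{Julin2024} as the paper does, or supply the flattening argument with the correct endpoint product, regularity, and trace estimates --- the present sketch does not yet constitute a proof.
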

\begin{proof}
See \cite[Proposition 3.8]{Julin2024} for the first elliptic estimate. We now demonstrate the second one. Using integration by parts, we obtain
\begin{equation*}
\int_{\Omega}\module{\nabla u}^2dx=-\int_{\Omega}\Delta uudx=-\int_{\Omega}\divergence Fudx=\int_{\Omega}F^i\partial_iudx.
\end{equation*}
It follows that 
\begin{equation*}
\norm{\nabla u}_{L^2(\Omega)}^2\le \norm{\nabla u}_{L^2(\Omega)}\norm{F}_{L^2(\Omega)},
\end{equation*}
and hence
\begin{equation*}
\norm{\nabla u}_{L^2(\Omega)}\le \norm{F}_{L^2(\Omega)}.
\end{equation*}
Therefore, the $H^1$ estimate follows by Poincar\'{e}'s inequality. 
\end{proof}

\section{Proof of Theorem \ref{thmain1}}\label{se3}

We establish Theorem \ref{thmain1} by contradiction. 

Suppose that the maximal time $\Tstar <\infty$. Then, either the velocity field $v(\cdot, \Tstar) \notin H^3(\Omega_{\Tstar})$, or the free boundary $\parOmega_{\Tstar} \notin H^4$. 

Assume that none of the four scenarios in Theorem \ref{thmain1} holds. Then, there exists a positive constant $\CC$ such that
\begin{align}
&\inf_{0\le t<\Tstar}\radi(\Omega_t)>\CC^{-1},\quad\sup_{0\le t<\Tstar}
\norm{\parOmega_t}_{H^{2+\varepsilon}}\le \CC,\label{eqass1}\\
&\sup\limits_{0\le t< \Tstar}\Paren{\norm{\mc}_{H^{\frac 32}(\parOmega_t)}+\norm{v_n}_{H^{\frac 52}(\parOmega_t)}}\le \CC,\label{eqass2}\\
&\int_{0}^{\Tstar} \norm{\nabla v}_{L^{\infty}(\Omega_t)}dt\le \CC,\label{eqass3}\\
&\norm{\vorticity v_0}_{L^2(\Omega_0)}^2+\frac{1}{2}\norm{v_0}_{L^2(\Omega_0)}^2+\mathcal{H}^{2}(\parOmega_0)\le \CC. \label{eqass4}
\end{align} 
Here, $\mathcal{H}^{2}(\parOmega_0):=\int_{\parOmega_0}dS$ is the surface area, i.e., the two-dimensional Hausdorff measure of the surface, and  $\radi(\Omega_t)$ denotes the uniform exterior and interior ball radius of $\Omega_t$, defined as:
\begin{equation*}
\radi(\Omega_t):=\sup  \Brace{r > 0 : \forall x \in \parOmega_t, \ \exists B_r(y) \subset \Omega_t, \ B_r(z) \subset \Omega_t^c\ \text{with}\ x \in \partial B_r(y) \cap \partial B_r(z)}.
\end{equation*} 
Note that $\radi(\Omega_t) > 0$ excludes singularities like cusps, corners, or boundary self-intersection (Case (1)), see Fig.~\ref{fig:case1} for an illustration of the case where the radius $\radi = 0$, in which the boundary self-intersects in at least one point.

We recall the following Reynolds transport theorem (see, e.g.,  \cite{Shatah2008a}).

\begin{lemma}\label{leReynolds}
For a smooth function $f$ defined on the moving domain $\Omega_t$, the following holds:
\begin{align*}
\frac{d}{dt}\int_{\Omega_t}fdx&=\int_{\Omega_t}\DT fdx,\\
\frac{d}{dt}\int_{\parOmega_t}fdS&=\int_{\parOmega_t}(\DT f+f\Bddiv v) dS.
\end{align*}
\end{lemma}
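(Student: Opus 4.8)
\textit{Proof proposal (for Lemma~\ref{leReynolds}).}
The plan is to pull both integrals back to the fixed initial configuration along the Lagrangian flow and differentiate under the integral there. Let $\Phi_t\colon\overline{\Omega_0}\to\overline{\Omega_t}$ be the flow of $v$, i.e.\ $\partial_t\Phi_t(y)=v(\Phi_t(y),t)$ with $\Phi_0=\mathrm{id}$; since $v_n=\mathscr V_{\parOmega_t}$ means $v$ is tangent to the moving interface, $\Phi_t$ maps $\Omega_0$ onto $\Omega_t$ and $\parOmega_0$ onto $\parOmega_t$. All differentiations under the integral sign are justified by the smoothness of $f$, $v$, and of a boundary parametrization, under the regularity standing throughout.

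For the volume identity, the change of variables $x=\Phi_t(y)$ gives $\int_{\Omega_t}f\,dx=\int_{\Omega_0}(f\circ\Phi_t)\,J_t\,dy$ with $J_t=\det(\nabla_y\Phi_t)>0$. Differentiating in $t$ over the \emph{fixed} domain $\Omega_0$ and invoking the classical Liouville formula $\partial_t J_t=\big((\divergence v)\circ\Phi_t\big)J_t$,
\begin{align*}
	\frac{d}{dt}\int_{\Omega_t}f\,dx={}&\int_{\Omega_0}\big((\DT f)\circ\Phi_t\big)J_t\,dy\\
	&+\int_{\Omega_0}(f\circ\Phi_t)\big((\divergence v)\circ\Phi_t\big)J_t\,dy.
\end{align*}
The second term vanishes by the incompressibility condition \eqref{eqEuler2}, and undoing the change of variables yields $\int_{\Omega_t}\DT f\,dx$, the first claim. (Without incompressibility the same computation produces the textbook form $\int_{\Omega_t}(\DT f+f\,\divergence v)\,dx$.)

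For the surface identity, parametrize $\parOmega_0$ locally by $y=\gamma(u)$, $u=(u^1,u^2)$, so that $X(u,t):=\Phi_t(\gamma(u))$ parametrizes $\parOmega_t$, with induced metric $g_{\alpha\beta}(u,t)=\partial_\alpha X\cdot\partial_\beta X$ and area element $dS_t=\sqrt{\det g(u,t)}\,du$. Then $\int_{\parOmega_t}f\,dS=\int_{\parOmega_0}(f\circ X)\sqrt{\det g}\,du$, so it suffices to prove $\partial_t\sqrt{\det g}=\big((\Bddiv v)\circ X\big)\sqrt{\det g}$. Since $\partial_t X=v\circ X$, one gets $\partial_\alpha(\partial_t X)=(\nabla v)\,\partial_\alpha X$ (with $\nabla v$ the Jacobian matrix of $v$), hence $\partial_t g_{\alpha\beta}=\partial_\alpha X^\top\big(\nabla v+(\nabla v)^\top\big)\partial_\beta X$. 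Using Jacobi's identity $\partial_t\sqrt{\det g}=\tfrac12\sqrt{\det g}\,g^{\alpha\beta}\partial_t g_{\alpha\beta}$ together with the fact that $g^{\alpha\beta}\partial_\alpha X^i\partial_\beta X^j=\delta^{ij}-\normal^i\normal^j$ is the orthogonal projection onto $T_x\parOmega_t$, the right-hand side collapses to $\sqrt{\det g}\,(\delta^{ij}-\normal^i\normal^j)\partial_j v^i=\sqrt{\det g}\,\Bddiv v$, recalling $(\Bdnabla v)_{ij}=\partial_j v^i-\partial_l v^i\normal^l\normal_j$ and $\Bddiv v=\operatorname{Tr}(\Bdnabla v)$ from Section~\ref{se2}. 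Differentiating the pulled-back integral in $t$ and transporting back to $\parOmega_t$ then gives $\frac{d}{dt}\int_{\parOmega_t}f\,dS=\int_{\parOmega_t}(\DT f+f\,\Bddiv v)\,dS$.

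The only substantive step is the evolution law for the surface element, $\partial_t(dS_t)=(\Bddiv v)\,dS_t$; the rest is bookkeeping with the flow map. In contrast to the volume case, where the Jacobi/Liouville formula is immediate, here one must recognize that contracting the symmetrized velocity gradient with the inverse induced metric reproduces exactly the tangential divergence $\Bddiv v$ — this is where the geometry of $\parOmega_t$ enters. One could also bypass local coordinates and deduce $\partial_t(dS_t)=(\Bddiv v)\,dS_t$ from the first-variation-of-area formula for a normal-plus-tangential deformation with velocity $v$, but the coordinate computation above is the most self-contained.
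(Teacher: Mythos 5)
Your proof is correct. The paper itself gives no argument for this lemma — it simply cites Shatah--Zeng \cite{Shatah2008a} — and your Lagrangian-pullback computation (Liouville's formula for the volume Jacobian, and Jacobi's formula plus the identity $g^{\alpha\beta}\partial_\alpha X^i\partial_\beta X^j=\delta^{ij}-\normal^i\normal^j$ for the evolution of the area element) is exactly the standard proof underlying that reference; you also correctly observe that the stated form of the volume identity uses the divergence-free condition \eqref{eqEuler2}, which the paper leaves implicit.
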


Applying this to the energy components of \eqref{eqEuler} yields
\begin{align*}
\frac{d}{dt}\Paren{\frac{1}{2}\norm{v}_{L^2(\Omega_t)}^2} &= \int_{\Omega_t} v \cdot (-\nabla p)  dx = -\int_{\parOmega_t} p v_n  dS, \\
\frac{d}{dt} \mathcal{H}^2(\parOmega_t) &=\frac{d}{dt}\int_{\parOmega_t}dS=\int_{\parOmega_t}\Bddiv v dS= \int_{\parOmega_t} \mc v_n  dS,
\end{align*}
where the last equality follows from differential geometry for a closed surface, i.e., $$\int_{\parOmega_t}\Bddiv v_{\text{tan}} dS=0,$$ for the tangential velocity $v_{\text{tan}}=v-v_n\normal$. 
The pressure-curvature coupling $p = \mc$ then yields exact energy conservation:
\begin{equation*}
\frac{1}{2}\norm{v}_{L^2(\Omega_t)}^2+\mathcal{H}^{2}(\parOmega_t)\equiv\frac{1}{2}\norm{v}_{L^2(\Omega_0)}^2+\mathcal{H}^{2}(\parOmega_0),\quad 0\le t<\Tstar.
\end{equation*} 
Thus from \eqref{eqass4}, we obtain the uniform bound:
\begin{equation}\label{eqvL2}
\sup\limits_{0\le t< \Tstar}\Paren{\frac{1}{2}\norm{v}_{L^2(\Omega_t)}^2+\mathcal{H}^{2}(\parOmega_t)}\le \CC.
\end{equation} 

\begin{remark}
The constant in the following will primarily depend on $\CC$ and $\CC^{-1}$ (the lower bound of the uniform ball radius), and we will denote it simply as $C(\CC)$.
\end{remark}

Next, we establish uniform vorticity control:
\begin{equation*}
\sup\limits_{0\le t< \Tstar}\norm{\vorticity v}_{L^2(\Omega_t)}\le C(\CC).
\end{equation*} 
In fact, by applying Lemma \ref{leReynolds} to the vorticity energy and using \eqref{eqDtcurlv}, we obtain
\begin{align*}
\frac{d}{dt}\Paren{\frac 12 \norm{\vorticity v}_{L^2(\Omega_t)}^2}={}&\int_{\Omega_t}\DT\Paren{\vorticity v}:\Paren{\vorticity v}dx\\
={}&-\int_{\Omega_t}\Bracket{(\nabla v)^{\top}\Paren{\vorticity v}+\Paren{\vorticity v}\nabla v}:\Paren{\vorticity v}dx\\
\le{}&2\norm{\nabla v}_{L^\infty(\Omega_t)}\norm{\vorticity v}_{L^2(\Omega_t)}^2,\quad 0< t<\Tstar.
\end{align*}
By  \eqref{eqass3} and \eqref{eqass4}, for any $0 \le t < \Tstar$, Gr\"onwall's inequality then yields:
\begin{align}
\norm{\vorticity v}_{L^2(\Omega_t)}^2&\le \norm{\vorticity v_0}_{L^2(\Omega_0)}^2 \exp\Paren{4\int_0^t\norm{\nabla v(s)}_{L^\infty(\Omega_s)}ds}\nonumber\\
&\le  \norm{\vorticity v_0}_{L^2(\Omega_0)}^2 \exp\Paren{4\int_0^{\Tstar}\norm{\nabla v(t)}_{L^\infty(\Omega_t)}dt}\nonumber\\
&\le C(\CC).\label{eqcurlvL2}
\end{align} 

We note that, by  \eqref{eqass1}, the free boundary $\parOmega_t$ belongs to the $C^{1,\alpha}$-class for some $\alpha \in (0,1)$ throughout the time interval $\left[0, \Tstar\right)$. Combining with the curvature bound in  \eqref{eqass2} and Sobolev's embedding, we apply Lemma \ref{lecurvaturebound} to conclude that the second fundamental form $\sff$ satisfies
\begin{equation*}
\norm{\sff}_{L^4(\parOmega_t)}\le C\Paren{1+\norm{\mc}_{L^4(\parOmega_t)}}\le C(\CC),\quad 0\le t< \Tstar.
\end{equation*} 
Then, from Lemma \ref{lecurvaturebound} again, it follows that
\begin{equation}\label{eqsffbound}
\norm{\sff}_{H^{\frac 32}(\parOmega_t)}\le C(\CC)\Paren{1+\norm{\mc}_{H^{\frac 32}(\parOmega_t)}}\le C(\CC),\quad 0\le t< \Tstar.
\end{equation}
Since, by \eqref{eqass1}, the free boundary $\parOmega_t$ belongs to the $H^{2+\varepsilon}$-class throughout the time interval $\left[0,\Tstar\right)$, and by applying Lemma \ref{leboundary regularity}, we deduce that the free boundary $\parOmega_t \in H^{\frac{7}{2}}$ uniformly on $\left[0,\Tstar\right)$, as the constant $C(\CC)$ in \eqref{eqsffbound} is independent of time. 
Additionally, the unit outer  normal vector $\normal \in H^{\frac{5}{2}}$ uniformly on $\left[0,\Tstar\right)$, since $\sff = \Bdnabla \normal$. Using boundary condition \eqref{eqEuler3}, we conclude that
\begin{equation}\label{eqCCC}
\begin{aligned}
&\inf_{0\le t<\Tstar}\radi(\Omega_t)>\CC^{-1},\quad \sup_{0\le t<\Tstar}
\norm{\parOmega_t}_{H^{\frac72}}
\le C(\CC),\\
&\sup\limits_{0\le t< \Tstar}\Paren{\norm{\normal}_{H^{\frac 52}(\parOmega_t)}+\norm{\sff}_{H^{\frac 32}(\parOmega_t)}+\norm{v_n}_{H^{\frac 52}(\parOmega_t)}}\le C(\CC),\\ 
&\sup\limits_{0\le t< \Tstar}\Paren{\norm{\vorticity v}_{L^2(\Omega_t)}+\norm{v}_{L^2(\Omega_t)}}\le C(\CC).
\end{aligned} 
\end{equation}

\begin{remark}
\eqref{eqass1}--\eqref{eqass4} are equivalent to \eqref{eqass3} and \eqref{eqCCC}. However, in the absence of  \eqref{eqass3}, we can derive all the other assumptions in \eqref{eqCCC}, except the $L^2$-bound on the vorticity given by \eqref{eqcurlvL2}, i.e.,
\begin{equation*}
\sup\limits_{0\le t< \Tstar}\norm{\vorticity v}_{L^2(\Omega_t)}\le C(\CC) 
\end{equation*}
may not be valid since the exponential growth factor in Gr\"onwall's estimate depends critically on
\begin{equation*}
\int_0^{\Tstar}\norm{\nabla v(t)}_{L^\infty(\Omega_t)}dt.
\end{equation*}
\end{remark}

We extend the unit outer normal $\normal$ to $\Omega_t$ using harmonic extension. By elliptic regularity theory for Dirichlet problems and the uniform domain characteristics in \eqref{eqCCC}, the extended $\normal$ (retaining the same notation) satisfies:
\begin{equation}\label{eqextendnormal}
\sup_{0\le t< \Tstar}\norm{\normal}_{H^{3}(\Omega_t)}\le C(\CC),\ \text{and}\ \sup_{0\le t< \Tstar} \norm{\normal}_{H^{5/2}(\parOmega_t)} \le C(\CC).
\end{equation}

Under \eqref{eqCCC}, we will derive energy estimates \eqref{eqenergy estimate} and \eqref{eqclose energy} 
on the time interval $(0,\Tstar)$.

We now list some inequalities that will be frequently used. The following Gagliardo--Nirenberg inequality can be found in \cite{Mironescu_2018}.
\begin{lemma}\label{leGN ineq}
Let $\Omega \subset \mathbb{R}^3$ be a bounded Lipschitz domain. For any $s_1, s_2 \ge 0$, $1 \le p_1, p_2 \le \infty$, $\theta \in (0, 1)$ with
\begin{equation*}
s=\theta s_1+(1-\theta) s_2,\quad  \frac{1}{p}=\frac{\theta}{p_1}+\frac{1-\theta}{p_2},
\end{equation*} 
the Gagliardo--Nirenberg inequality holds:
\begin{equation*} 
\norm{f}_{W^{s,p}(\Omega)}\le C\norm{f}_{W^{s_1,p_1}(\Omega)}^\theta\norm{f}_{W^{s_2,p_2}(\Omega)}^{1-\theta},
\end{equation*}
provided either
\begin{enumerate}[label={\rm (\arabic*)}]
\item $s_1 \neq s_2$ and $\nexists$ integer $k$ such that $s_2 = k$ and $p_2 = 1$ with $s_1 - 1/p_1 \ge s_2 - 1$, or
\item $s_1 = s_2$ (reducing to H\"older's inequality).
\end{enumerate}
\end{lemma}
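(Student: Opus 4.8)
The statement is quoted from \cite{Mironescu_2018}, so the shortest route is to cite it; for completeness I outline a self-contained argument. The plan is to reduce from the bounded Lipschitz domain $\Omega$ to the whole space $\mathbb{R}^3$ via a universal extension operator, prove the inequality on $\mathbb{R}^3$ by a Littlewood--Paley/Bernstein argument (or, for integer orders, by Nirenberg's classical iteration together with real interpolation), and transfer the estimate back.

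First I would fix the three index pairs $(s,p)$, $(s_1,p_1)$, $(s_2,p_2)$ appearing in the statement and invoke an extension operator $E$ for $\Omega$: for a bounded Lipschitz domain there is a linear $E$ with $Ef|_\Omega=f$ and $\norm{Ef}_{W^{\sigma,q}(\mathbb{R}^3)}\le C\norm{f}_{W^{\sigma,q}(\Omega)}$ for each of these finitely many $(\sigma,q)$ --- Stein's operator when $\sigma$ is an integer and $1\le q\le\infty$, and Rychkov's operator on the full Besov/Triebel--Lizorkin scale when $\sigma$ is not an integer (where $W^{\sigma,q}=B^{\sigma}_{q,q}$). Since restriction gives $\norm{g}_{W^{s,p}(\Omega)}\le\norm{g}_{W^{s,p}(\mathbb{R}^3)}$, it suffices to prove
\[
\norm{g}_{W^{s,p}(\mathbb{R}^3)}\le C\,\norm{g}_{W^{s_1,p_1}(\mathbb{R}^3)}^{\theta}\,\norm{g}_{W^{s_2,p_2}(\mathbb{R}^3)}^{1-\theta}
\]
under the stated scaling relations, and then apply this to $g=Ef$.

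For the whole-space estimate I would split along the hypothesis. If $s_1=s_2$ (case (2)), then $s=s_1$ and the bound is just H\"older's inequality. If $s_1\ne s_2$, decompose $g=\sum_j\Delta_j g$ into dyadic Littlewood--Paley pieces; Bernstein's inequalities control the change of integrability at a fixed frequency $2^j$ at the cost of powers of $2^j$ dictated by the Sobolev numbers $s_i-3/p_i$, and the convexity identities $s=\theta s_1+(1-\theta)s_2$ and $1/p=\theta/p_1+(1-\theta)/p_2$ make the resulting geometric series in $j$ summable, producing the product bound with exponents $\theta$ and $1-\theta$. The excluded configuration in case (1) --- $s_2$ an integer, $p_2=1$, and $s_1-1/p_1\ge s_2-1$ --- is precisely where this scheme fails, reflecting the genuine sharpness of Gagliardo--Nirenberg near $L^1$, so the hypotheses are essential rather than cosmetic. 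For integer $s_1,s_2$ one may alternatively run Nirenberg's iteration (repeated integration by parts and H\"older) on $g\in C_c^\infty(\mathbb{R}^3)$, pass to the limit by density, and recover the fractional orders by real interpolation.

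The main obstacle is the domain reduction: one needs an extension operator valid \emph{simultaneously} at all three smoothness/integrability indices in the inequality, which for an arbitrary Lipschitz domain is exactly Rychkov's construction --- a nontrivial input --- although restricting attention to the few indices actually used elsewhere in the paper lets one get by with Stein extension plus interpolation. A secondary difficulty is bookkeeping the admissible range carefully enough to genuinely avoid the forbidden endpoint rather than merely assuming $s_1\ne s_2$. Both issues dissolve upon citing \cite{Mironescu_2018}, which is the route adopted here.
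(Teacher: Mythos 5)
Your proposal is correct and follows essentially the same route as the paper, which proves the lemma simply by citing \cite{Mironescu_2018}. The additional self-contained sketch (Stein/Rychkov extension plus a Littlewood--Paley argument on $\mathbb{R}^3$) is a reasonable outline, though it glosses over the distinction between $W^{k,p}$ and Besov scales at integer smoothness and the endpoint cases $p_i\in\{1,\infty\}$ --- precisely the subtleties that make the Brezis--Mironescu citation the appropriate proof, as the paper itself does.
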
  

The following Kato--Ponce type inequality can be found in \cite{Cheng_2016}, \cite{Kato1988}, and \cite[Proposition 2.10]{Julin2024}.
\begin{lemma}\label{leKatoPonce}
Let $\Omega\subset\mathbb{R}^3$ be bounded with $C^{1,\alpha}$ boundary ($\alpha > 0$) and $\norm{\sff}_{H^{\frac 32}(\parOmega)}\le C$. Let $k\in \frac{1}{2}\mathbb{N},l\in\mathbb{N}$, with $k,l \le 3$ and let $p_1, p_2, q_1, q_2 \in[2, \infty]$ with $p_1, q_2<\infty$ satisfying  
\begin{equation*}
\frac{1}{p_1}+\frac{1}{q_1}=\frac{1}{p_2}+\frac{1}{q_2}=\frac{1}{2}.
\end{equation*} 
Then, the following product estimates hold:
\begin{align*}
\norm{fg}_{H^k(\parOmega)} &\le C\Paren{\norm{f}_{H^k(\parOmega)}\norm{g}_{L^{\infty}(\parOmega)}+\norm{f}_{L^{\infty}(\parOmega)}\norm{g}_{H^k(\parOmega)}},\\
\norm{fg}_{H^l(\parOmega)} &\le C\Paren{\norm{f}_{W^{l, p_1}(\parOmega)}\norm{g}_{L^{q_1}(\parOmega)}+\norm{f}_{L^{p_2}(\parOmega)}\norm{g}_{W^{l, q_2}(\parOmega)}},\\
\norm{fg}_{H^k(\Omega)}&\le C	\Paren{\norm{f}_{W^{k,p_1}(\Omega)}\norm{g}_{L^{q_1}(\Omega)}+	\norm{f}_{L^{p_2}(\Omega)}\norm{g}_{W^{k,q_2}(\Omega)}}.
\end{align*}   
\end{lemma}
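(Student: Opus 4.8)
The plan is to reduce the three product estimates to their flat‑space counterparts --- the Kato--Ponce inequality and the Hölder‑type fractional Leibniz rule on $\mathbb{R}^2$ and $\mathbb{R}^3$ --- and to transplant them to $\Omega$ and $\parOmega$ through extension operators and boundary charts whose operator norms are controlled solely by $\alpha$ and $\norm{\sff}_{H^{\frac 32}(\parOmega)}$. \emph{Geometric setup.} I would first observe that $C^{1,\alpha}$‑regularity together with $\norm{\sff}_{H^{\frac 32}(\parOmega)}\le C$ also controls $\norm{\mc}_{H^{\frac 32}(\parOmega)}$ (since $\mc=\operatorname{Tr}\sff$), so that Lemma~\ref{leboundary regularity}, applied after a short bootstrap starting from $C^{1,\alpha}$, lifts $\parOmega$ to (essentially) $H^{\frac 72}$ with a bound depending only on the constants. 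Consequently there is a finite atlas $\{U_a,\Phi_a\}$ flattening $\parOmega$, a subordinate partition of unity $\{\chi_a\}$, and a Stein‑type extension operator $E\colon W^{s,p}(\Omega)\to W^{s,p}(\mathbb{R}^3)$, all with norms controlled by $\alpha$ and $\norm{\sff}_{H^{\frac 32}(\parOmega)}$, the chart maps $\Phi_a$ being of class $H^{\frac 72}\subset C^{2,\beta}$ uniformly. The restriction $k,l\le 3$ is exactly the range in which pulling an $H^k$‑ or $W^{l,p}$‑function back and forth through such charts, and extending it by $E$, preserves the norm up to constants of this form.

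\emph{Interior estimate (third inequality).} For integer $k$, expand $D^k(fg)=\sum_{j=0}^k\binom{k}{j}D^jf\,D^{k-j}g$; Hölder with the prescribed exponent pairs handles the extreme terms $j\in\{0,k\}$, and for $0<j<k$ I would interpolate each factor by the Gagliardo--Nirenberg inequality of Lemma~\ref{leGN ineq}, writing $\norm{D^jf}_{L^{r_1}}\le C\norm{f}_{W^{k,p_1}(\Omega)}^{j/k}\norm{f}_{L^{p_2}(\Omega)}^{1-j/k}$ and symmetrically for $g$, then closing with Young's inequality. For half‑integer $k$, extend $f$ and $g$ by $E$, invoke the fractional Leibniz rule on $\mathbb{R}^3$ (Kato--Ponce, \cite{Kato1988,Cheng_2016}), and restrict back to $\Omega$; the constant picks up a factor $\norm{E}^2$.

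\emph{Boundary estimates (first two inequalities).} Localize via $\{\chi_a\}$, i.e.\ $fg=\sum_a\chi_a\,fg$, and estimate each $\chi_a\,fg$ by transporting it to a bounded flat region of $\mathbb{R}^2$ with $\Phi_a$; since $\Phi_a\in C^{2,\beta}$ uniformly, composition yields two‑sided bounds between $\norm{\cdot}_{W^{s,p}}$ on $\parOmega\cap U_a$ and $\norm{\cdot}_{W^{s,p}}$ of the pullback for all $0\le s\le 3$, with constants controlled by the atlas. On the flat pieces the first estimate is the Kato--Ponce inequality $\norm{\widetilde f\widetilde g}_{H^k(\mathbb{R}^2)}\le C\bigl(\norm{\widetilde f}_{H^k}\norm{\widetilde g}_{L^\infty}+\norm{\widetilde f}_{L^\infty}\norm{\widetilde g}_{H^k}\bigr)$, proved by the Littlewood--Paley paraproduct splitting of $\widetilde f\widetilde g$ with the high--high interaction absorbed because $k\ge 0$; the second is the Hölder‑type fractional Leibniz rule, handled for integer $l$ exactly as in the interior case and in general by the same paraproduct argument. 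Summing over the finitely many charts (which have bounded overlap) produces the stated inequalities.

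\emph{Main obstacle.} The only genuine difficulty is uniformity: one must verify that $E$ and every chart‑induced norm equivalence have norms depending solely on $\alpha$ and $\norm{\sff}_{H^{\frac 32}(\parOmega)}$, and not on any higher, uncontrolled regularity of $\parOmega$. This is precisely what the quantitative $H^{\frac 72}$‑atlas coming from Lemma~\ref{leboundary regularity} and a Stein extension adapted to $C^{1,\alpha}$‑plus‑curvature‑controlled boundaries are built to provide, and it is the reason the hypothesis is phrased via $\norm{\sff}_{H^{\frac 32}}$ rather than a bare $H^s$‑norm of $\parOmega$. Once a clean flat‑space Kato--Ponce statement is fixed the half‑integer cases add nothing new; alternatively, one may simply cite \cite[Proposition~2.10]{Julin2024} and \cite{Cheng_2016}, where exactly this packaging is carried out.
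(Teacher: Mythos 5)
The paper does not actually prove this lemma: it is stated with a pointer to \cite{Kato1988}, \cite{Cheng_2016} and \cite[Proposition 2.10]{Julin2024}, and your fallback option (``simply cite'') is exactly what the authors do. Your sketch instead reconstructs the standard proof behind those references --- localization by a partition of unity, flattening by charts, a Stein extension, the flat Kato--Ponce/fractional Leibniz rule for the half-integer cases, and the classical Leibniz--H\"older--Gagliardo--Nirenberg argument (your exponent bookkeeping $\tfrac1{r_1}+\tfrac1{r_2}=\tfrac12$ is correct) for the integer cases; this is the right architecture and correctly isolates the real issue, namely uniformity of the chart and extension constants in terms of $\alpha$ and $\norm{\sff}_{H^{3/2}(\parOmega)}$. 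Two points in your outline are glibber than they can afford to be, and they are precisely where deferring to \cite[Proposition 2.10]{Julin2024} earns its keep. First, Lemma \ref{leboundary regularity} cannot be invoked ``starting from $C^{1,\alpha}$'': its hypothesis is $\parOmega\in H^{s_0}$ with $s_0>2$, so the preliminary upgrade from $C^{1,\alpha}$ plus $\norm{\sff}_{H^{3/2}}\le C$ to Sobolev regularity of the boundary is a separate (standard, but not free) step, carried out in \cite{Julin2024} via local graph representations and elliptic regularity rather than via that lemma. Second, the claim that $H^{7/2}$ charts ($\subset C^{2,1/2}$ on a two-dimensional boundary) give two-sided $W^{s,p}$ equivalences for \emph{all} $0\le s\le 3$ and all $p\in[2,\infty]$ is delicate at the top end: the third derivatives of the chart maps lie only in $H^{1/2}\hookrightarrow L^4$, so preserving $W^{3,p_1}(\parOmega)$ for large finite $p_1$ (as the second inequality requires) needs a more careful argument about how these norms are defined and transported in low-regularity geometry --- again the content of the cited proposition. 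So your proposal is a sound blueprint of the proof that the literature supplies, whereas the paper's ``proof'' is the citation itself; if you want a self-contained argument you would need to fill in those two quantitative geometric steps.
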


We will also apply the following bilinear inequality, which can be found in \cite[Lemma 2.5]{Beale1981}:
\begin{equation}\label{eqbilinearineq}
\norm{fg}_{H^s(\Omega)}\le C \norm{f}_{H^r(\Omega)} \norm{g}_{H^s(\Omega)}, \quad r > \tfrac{3}{2}, \ r \ge s \ge 0.
\end{equation}

The primary energy functional captures the highest-order part of the estimates:
\begin{equation}\label{eqenergy1}
\energy(t)\coloneqq\frac{1}{2}\Paren{\int_{\Omega_t}\module{\DT^{2}v}^2dx+\int_{\parOmega_t}\module{\Bdnabla\Paren{\DT v\cdot \normal}}^2dS+\int_{\Omega_t}\module{\nabla^{2}\Paren{\vorticity v}}^2dx}.
\end{equation} 
The composite energy functional integrates spatial regularity: 
\begin{equation}\label{eqenergy2}
E(t)\coloneqq\norm{\DT^2v}_{L^{2}(\Omega_t)}^2+\norm{\DT v}_{H^{\frac 32}(\Omega_t)}^2+\norm{v}_{H^{3}(\Omega_t)}^2+\norm{\DT v\cdot\normal}_{H^1(\parOmega_t)}^2+1.
\end{equation}  

\begin{remark}\label{rem3/2}
The $3/2$-growth in the Sobolev regularity in \eqref{eqenergy2}, as identified in \cite{Julin2024,Hao2025,Shatah2008a}, explains the energy level used above: a material derivative is effectively comparable to a $3/2$-order spatial derivative, capturing the intrinsic smoothing effect induced by surface tension.  The energy functional is constructed using the material derivative without separating the time derivative, a formulation particularly suited for energy estimates in the Eulerian framework: the local-in-time a priori estimate for system \eqref{eqEuler} is obtained by applying energy functional \eqref{eqenergy2} in \cite{Julin2024} by setting the electric field there to zero.  This type of functional leverages the structure of system \eqref{eqEuler}, allowing for a gain of $1/2$-order spatial derivative through direct substitution from \eqref{eqEuler1}; see \cite{Hao2025} for detailed discussions.
\end{remark} 

We compute the time derivative of the energy functional \eqref{eqenergy1}. 
\begin{lemma}
Under the consolidated estimates \eqref{eqCCC}, for any $t \in (0, \Tstar)$, it holds
\begin{align}
\frac{d\energy}{dt}\le{}& C(\CC)\Paren{\norm{\nabla v}_{L^\infty(\Omega_t)}+\norm{\nabla p}_{H^1(\Omega_t)}}E(t)-\int_{\Omega_t}[\DT^{2},\nabla] p\cdot \DT^{2}vdx\nonumber\\
&+\int_{\Omega_t} \DT^{2} p\divergence \DT^{2}vdx-\int_{\parOmega_t}\error (\DT^{2}v \cdot \normal)dS,\label{eqclaim1}
\end{align}
where the error $\error$ is defined in Lemma \ref{le5}.
\end{lemma}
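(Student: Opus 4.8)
\emph{Strategy.} The plan is to differentiate each of the three integrals forming $\energy$ in \eqref{eqenergy1} by the Reynolds transport theorem (Lemma \ref{leReynolds}), to replace the top-order material derivatives using the Euler equation \eqref{eqEuler1}, to integrate by parts so as to move one derivative, and then to substitute the boundary identity of Lemma \ref{le5}. This should produce precisely the three displayed terms --- $-\int_{\Omega_t}[\DT^2,\nabla]p\cdot\DT^2 v\,dx$, $\int_{\Omega_t}\DT^2 p\,\divergence\DT^2 v\,dx$ and $-\int_{\parOmega_t}\error\,(\DT^2 v\cdot\normal)\,dS$ --- plus a family of remainders, each carrying a factor $\norm{\nabla v}_{L^\infty(\Omega_t)}$, $\norm{\Bdnabla v}_{L^\infty(\parOmega_t)}$ or $\norm{\nabla p}_{H^1(\Omega_t)}$ and otherwise bounded by $E(t)$.

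\emph{The interior kinetic term.} First I would compute, via Lemma \ref{leReynolds}, $\frac{d}{dt}\tfrac12\int_{\Omega_t}\module{\DT^2 v}^2\,dx=\int_{\Omega_t}\DT^3 v\cdot\DT^2 v\,dx$. From $\DT v=-\nabla p$ and \eqref{eqDtlnablap} one has $\DT^3 v=-\nabla\DT^2 p-[\DT^2,\nabla]p$, so integrating the gradient term by parts gives
\begin{equation*}
	\frac{d}{dt}\tfrac12\int_{\Omega_t}\module{\DT^2 v}^2\,dx=-\int_{\parOmega_t}\DT^2 p\,(\DT^2 v\cdot\normal)\,dS+\int_{\Omega_t}\DT^2 p\,\divergence\DT^2 v\,dx-\int_{\Omega_t}[\DT^2,\nabla]p\cdot\DT^2 v\,dx.
\end{equation*}
On $\parOmega_t$ I would substitute $\DT^2 p=-\LB(\DT v\cdot\normal)+\error$ from Lemma \ref{le5}: the $\error$-part produces the third displayed term, and an integration by parts on the closed surface turns the $\LB$-part into $-\int_{\parOmega_t}\Bdnabla(\DT v\cdot\normal)\cdot\Bdnabla(\DT^2 v\cdot\normal)\,dS$.

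\emph{The boundary and vorticity terms, and the cancellation.} Next I would apply the surface transport formula of Lemma \ref{leReynolds} to $\tfrac12\int_{\parOmega_t}\module{\Bdnabla(\DT v\cdot\normal)}^2\,dS$ and commute $\DT$ through $\Bdnabla$ and through the scalar $\DT v\cdot\normal$ (via $[\DT,\Bdnabla]f=-(\Bdnabla v)^\top\Bdnabla f$ and $\DT\normal=-(\Bdnabla v)^\top\normal$ from Lemma \ref{le1}). This yields $+\int_{\parOmega_t}\Bdnabla(\DT v\cdot\normal)\cdot\Bdnabla(\DT^2 v\cdot\normal)\,dS$, which cancels the term produced by the interior part, together with genuinely lower-order boundary integrals (involving $\Bdnabla v$, $\sff$, $\Bddiv v$ and at most $\Bdnabla^2 v$). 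For the vorticity integral, Lemma \ref{leReynolds} and the first identity of Lemma \ref{le3} give $\frac{d}{dt}\tfrac12\int_{\Omega_t}\module{\nabla^2(\vorticity v)}^2\,dx=\int_{\Omega_t}\nabla^2(\vorticity v):\big(\nabla v\tencon\nabla^2(\vorticity v)+(\vorticity v)\tencon\nabla^3 v+\nabla^2 v\tencon\nabla(\vorticity v)\big)\,dx$. Summing the three pieces, the top-order boundary terms cancel and only the three displayed terms plus the remainders survive.

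\emph{Estimating the remainders; the main obstacle.} Finally I would bound every remainder by $C(\CC)\big(\norm{\nabla v}_{L^\infty(\Omega_t)}+\norm{\Bdnabla v}_{L^\infty(\parOmega_t)}+\norm{\nabla p}_{H^1(\Omega_t)}\big)E(t)$. The geometric quantities ($\norm{\sff}_{H^{3/2}}$, $\norm{\normal}_{H^{5/2}(\parOmega_t)}$, $\norm{\normal}_{H^3(\Omega_t)}$, $\norm{v_n}_{H^{5/2}(\parOmega_t)}$, $\norm{v}_{L^2(\Omega_t)}$, $\parOmega_t\in H^{7/2}$) are supplied by \eqref{eqCCC}--\eqref{eqextendnormal}; the highest-order quantities ($\norm{\nabla^2(\vorticity v)}_{L^2(\Omega_t)}$, $\norm{v}_{H^3(\Omega_t)}$, $\norm{\DT v\cdot\normal}_{H^1(\parOmega_t)}$, and, since $\DT v=-\nabla p$, $\norm{\nabla p}_{H^{3/2}(\Omega_t)}$, together with their traces $\norm{\nabla^2 v}_{L^2(\parOmega_t)}$ and $\norm{\nabla^2 p}_{L^2(\parOmega_t)}$) are each $\le C(\CC)\sqrt{E(t)}$ directly from \eqref{eqenergy2} and the trace inequality; and the Gagliardo--Nirenberg and Kato--Ponce product estimates (Lemmas \ref{leGN ineq}, \ref{leKatoPonce} and \eqref{eqbilinearineq}) are used to distribute derivatives so that exactly one low-order factor is pulled out of each term. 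The coefficient $\norm{\nabla p}_{H^1(\Omega_t)}$ is retained rather than absorbed because its control rests on the elliptic estimate for \eqref{eqlaplace p}, carried out in Proposition \ref{prenergy estimate}. The hard part will be the top-order bookkeeping: checking that the boundary integral $\int_{\parOmega_t}\Bdnabla(\DT v\cdot\normal)\cdot\Bdnabla(\DT^2 v\cdot\normal)\,dS$ genuinely cancels between the interior-kinetic and boundary pieces --- the structural core of the argument --- and taming the borderline contributions (the $(\vorticity v)\tencon\nabla^3 v$ term in the vorticity evolution, and the $\Bdnabla^2 v$-type terms on $\parOmega_t$ coming from $\Bdnabla(\DT v\cdot\DT\normal)$), which are at worst cubic in the highest norms and must be split by interpolation, e.g. $\norm{\nabla^2 v}_{L^4(\Omega_t)}\le C\norm{\nabla v}_{L^\infty(\Omega_t)}^{1/2}\norm{\nabla^3 v}_{L^2(\Omega_t)}^{1/2}$, into a low-order weight times a quantity comparable to $E(t)$. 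The three terms that resist this absorption are exactly those kept in \eqref{eqclaim1}.
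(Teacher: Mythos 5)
Your proposal is correct and follows essentially the same route as the paper: Reynolds transport on the three pieces of $\energy$, substitution of \eqref{eqEuler1} and \eqref{eqDtlnablap} plus the divergence theorem for the kinetic part, the surface transport formula with the commutators of Lemma \ref{le1} for the boundary part, Lemma \ref{le3} for the vorticity part, and Lemma \ref{le5} to convert the boundary pressure term, with remainders absorbed into $C(\CC)\Paren{\norm{\nabla v}_{L^\infty(\Omega_t)}+\norm{\Bdnabla v}_{L^\infty(\parOmega_t)}+\norm{\nabla p}_{H^1(\Omega_t)}}E(t)$ via \eqref{eqCCC}, the trace theorem, Gagliardo--Nirenberg and Kato--Ponce. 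The only cosmetic difference is that you cancel $\int_{\parOmega_t}\Bdnabla(\DT v\cdot\normal)\cdot\Bdnabla(\DT^{2}v\cdot\normal)\,dS$ directly, whereas the paper performs the equivalent cancellation at the level of $\int_{\parOmega_t}(\DT^{2}v\cdot\normal)\LB(\DT v\cdot\normal)\,dS$ before invoking Lemma \ref{le5}; these differ by a single integration by parts on the closed surface.
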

\begin{proof} 
We define
\begin{align*} 
\energy(t)&=\frac{1}{2}\int_{\Omega_t}\module{\DT^{2}v}^2dx+\frac{1}{2}\int_{\parOmega_t}\module{\Bdnabla(\DT v\cdot \normal)}^2dS+\frac{1}{2}\int_{\Omega_t}\module{\nabla^{2} \Paren{\vorticity v}}^2dx\\
&=:\Lambda_1(t)+\Lambda_2(t)+\Lambda_3(t),
\end{align*}
and we will apply the Reynolds transport theorem (Lemma \ref{leReynolds}) several times without further mention.

From \eqref{eqEuler1} and the divergence theorem, we obtain
\begin{align*}
\frac{d\Lambda_1}{dt}
={}&\int_{\Omega_t}\DT^{3}v\cdot \DT^{2}vdx\\
={}&-\int_{\Omega_t}\DT^{2}\nabla p\cdot \DT^{2}vdx\\
={}&-\int_{\Omega_t}\nabla\DT^{2} p\cdot \DT^{2}vdx-\int_{\Omega_t}[\DT^{2},\nabla] p\cdot \DT^{2}vdx\\ 
={}&\underbrace{-\int_{\parOmega_t} \DT^{2} p (\DT^{2}v \cdot \normal) dS}_{\eqqcolon J_{1}(t)}+\int_{\Omega_t} \DT^{2} p\divergence \DT^{2}vdx-\int_{\Omega_t}[\DT^{2},\nabla] p\cdot \DT^{2}vdx.
\end{align*}  

To control the boundary energy $\Lambda_2$, we apply the commutator $[\DT, \Bdnabla]$ from Lemma \ref{le1} along with the following divergence theorem, namely,
\begin{equation*}
\int_{\parOmega_t}\Bdnabla f\cdot\Bdnabla gdS=-\int_{\parOmega_t}\LB f gdS,
\end{equation*}
to deduce
\begin{align*}
\frac{d\Lambda_2}{dt}
={}&\int_{\parOmega_t}\DT\Bdnabla(\DT v\cdot \normal)\cdot \Bdnabla(\DT v\cdot \normal)dS
+\frac{1}{2}\int_{\parOmega_t}\module{\Bdnabla(\DT v\cdot \normal)}^2\Bddiv vdS\\
={}&\int_{\parOmega_t}[\DT,\Bdnabla](\DT v\cdot \normal)\cdot \Bdnabla(\DT v\cdot \normal)dS+\int_{\parOmega_t}\Bdnabla\DT(\DT v\cdot \normal)\cdot \Bdnabla(\DT v\cdot \normal)dS\\
&+\frac{1}{2}\int_{\parOmega_t}\module{\Bdnabla(\DT v\cdot \normal)}^2\Bddiv vdS\\
={}&-\int_{\parOmega_t}\Bracket{(\Bdnabla v)^\top \Bdnabla(\DT v\cdot \normal)}\cdot \Bdnabla(\DT v\cdot \normal)dS
+\frac{1}{2}\int_{\parOmega_t}\module{\Bdnabla(\DT v\cdot \normal)}^2\Bddiv vdS\\
&+\int_{\parOmega_t}\Bdnabla(\DT^{2}v\cdot \normal)\cdot \Bdnabla(\DT v\cdot \normal)dS+\int_{\parOmega_t}\Bdnabla(\DT v\cdot \DT\normal)\cdot \Bdnabla(\DT v\cdot \normal)dS\\
\le{}&-\int_{\parOmega_t}(\DT^{2}v\cdot \normal)\LB(\DT v\cdot \normal)dS+C\norm{\Bdnabla v}_{L^\infty(\parOmega_t)}\norm{\Bdnabla(\DT v\cdot \normal)}_{L^2(\parOmega_t)}^2\\
&+\underbrace{\int_{\parOmega_t}\Bdnabla(\DT v\cdot \DT\normal)\cdot \Bdnabla(\DT v\cdot \normal)dS}_{\eqqcolon W(t)}.
\end{align*}
Again, using the formula for $\DT \normal$ in Lemma \ref{le1}, the trace theorem, and the regularity of the normal vector in \eqref{eqCCC}, we have
\begin{align*}
\module{W(t)}
\le{}&\int_{\parOmega_t}\module{\Bdnabla\DT v\tencon\DT\normal\tencon \Bdnabla(\DT v\cdot \normal)}dS+\int_{\parOmega_t}\module{\DT v\tencon \Bdnabla\DT\normal\tencon\Bdnabla(\DT v\cdot \normal)}dS\\
\le{}&C\norm{\Bdnabla\DT v}_{L^2(\parOmega_t)}\norm{\DT \normal}_{L^\infty(\parOmega_t)}\norm{\Bdnabla(\DT v\cdot \normal)}_{L^2(\parOmega_t)}\\
&+C\Paren{\norm{\DT v\tencon \Bdnabla^2v\tencon\normal}_{L^2(\parOmega_t)}+\norm{\DT v\tencon \Bdnabla v\tencon \Bdnabla \normal}_{L^2(\parOmega_t)}}\norm{\Bdnabla(\DT v\cdot \normal)}_{L^2(\parOmega_t)}\\
\le{}&C(\CC)\norm{\Bdnabla v}_{L^\infty(\parOmega_t)}\Paren{\norm{\Bdnabla\DT v}_{L^2(\parOmega_t)}^2+\norm{\Bdnabla(\DT v\cdot \normal)}_{L^2(\parOmega_t)}^2}\\
&+C(\CC)\norm{\nabla p}_{H^1(\Omega_t)}\norm{v}_{H^{3}(\Omega_t)}\norm{\Bdnabla(\DT v\cdot \normal)}_{L^2(\parOmega_t)}\\
\le{}&C(\CC)\norm{\Bdnabla v}_{L^\infty(\parOmega_t)}\Paren{\norm{\DT v}_{H^{\frac 32}(\Omega_t)}^2+\norm{\Bdnabla(\DT v\cdot \normal)}_{L^2(\parOmega_t)}^2}\\
&+C(\CC)\norm{\nabla p}_{H^1(\Omega_t)}\Paren{\norm{v}_{H^{3}(\Omega_t)}^2+\norm{\Bdnabla(\DT v\cdot \normal)}_{L^2(\parOmega_t)}^2}\\
\le{}&C(\CC)\Paren{\norm{\Bdnabla v}_{L^\infty(\parOmega_t)}+\norm{\nabla p}_{H^1(\Omega_t)}}E(t),
\end{align*}
where we have used the following results in the third inequality:
\begin{align*}
\norm{\DT v\tencon \Bdnabla^2v\tencon\normal}_{L^2(\parOmega_t)}&\le \norm{\DT v}_{L^4(\parOmega_t)} \norm{\Bdnabla^2v}_{L^4(\parOmega_t)}\norm{\normal}_{L^\infty(\parOmega_t)}\\
&\le C \norm{\DT v}_{H^{\frac 12}(\parOmega_t)} \norm{v}_{H^{\frac 52}(\parOmega_t)}\\
&\le C\norm{\nabla p}_{H^1(\Omega_t)}\norm{v}_{H^{3}(\Omega_t)},\\
\norm{\DT v\tencon \Bdnabla v\tencon \Bdnabla \normal}_{L^2(\parOmega_t)}&\le \norm{\DT v}_{L^4(\parOmega_t)} \norm{\Bdnabla v}_{L^{\infty}(\parOmega_t)}\norm{\Bdnabla \normal}_{L^4(\parOmega_t)}\\
&\le C\norm{\DT v}_{H^{\frac 12}(\parOmega_t)} \norm{v}_{H^{\frac 52}(\parOmega_t)}\norm{\normal}_{H^{\frac 32}(\parOmega_t)}\\
&\le C(\CC)\norm{\nabla p}_{H^1(\Omega_t)}\norm{v}_{H^{3}(\Omega_t)},
\end{align*}
which can be proved using the Sobolev embedding theorem, the trace theorem, \eqref{eqEuler1}, and the regularity of the normal vector in \eqref{eqCCC}. Since $H^3(\Omega_t) \hookrightarrow C^{1,\frac{1}{2}-\delta}(\overline{\Omega_t})$ in 3D (Sobolev embedding with $0<\delta\ll 1$), this guarantees the continuity of $\nabla v$ up to the boundary. Consequently, the inequality 
\begin{equation}\label{eq_Bdleint}
\norm{\Bdnabla v}_{L^\infty(\parOmega_t)} \le \norm{\nabla v}_{L^\infty(\Omega_t)}
\end{equation} 
holds by applying \cite[Lemma B.2]{Luo2024}.

Therefore, we obtain
\begin{equation*}
\frac{d\Lambda_2}{dt}\le\underbrace{-\int_{\parOmega_t}(\DT^{2}v\cdot \normal) \LB(\DT v\cdot \normal)dS}_{\eqqcolon J_2(t)}+C(\CC)\Paren{\norm{\nabla v}_{L^\infty(\Omega_t)}+\norm{\nabla p}_{H^1(\Omega_t)}}E(t).
\end{equation*}

To compute the last term involving the curl, $\Lambda_3$, we utilize Lemma \ref{le3} to obtain
\begin{align*}
\frac{d\Lambda_3}{dt}={}&\int_{\Omega_t}\DT \nabla^2 \Paren{\vorticity v}\tencon \nabla^2 \Paren{\vorticity v}dx\\
={}&\int_{\Omega_t} \nabla v \tencon \nabla^{2} \Paren{\vorticity v}\tencon \nabla^{2} \Paren{\vorticity v}+\nabla^{3} v \tencon  \Paren{\vorticity v}\tencon \nabla^{2} \Paren{\vorticity v}\\
&\qquad+ \nabla^2 v \tencon \nabla \Paren{\vorticity v}\tencon \nabla^{2} \Paren{\vorticity v} dx\\
\le{}& C \Paren{\norm{\nabla v}_{L^\infty(\Omega_t)}\norm{\nabla^{2}v}_{H^1(\Omega_t)}^2+\norm{\nabla^2 v \tencon \nabla \Paren{\vorticity v}\tencon \nabla^{2} \Paren{\vorticity v}}_{L^1(\Omega_t)}}\\
\le{}& C \Paren{\norm{\nabla v}_{L^\infty(\Omega_t)}\norm{v}_{H^3(\Omega_t)}^2+\norm{\module{\nabla^2v}^2}_{L^2(\Omega_t)}\norm{\nabla^3v}_{L^2(\Omega_t)}}. 
\end{align*}
Note that we have
\begin{equation*}
\norm{\module{\nabla^2v}^2}_{L^2(\Omega_t)}=\norm{\nabla^2 v}_{L^4(\Omega_t)}^2\le C\norm{\nabla v}_{W^{1,4}(\Omega_t)}^2
\end{equation*}
and by Gagliardo--Nirenberg inequality in Lemma \ref{leGN ineq}
\begin{equation}\label{eqnablavW14}
\norm{\nabla v}_{W^{1,4}(\Omega_t)}\le C\norm{\nabla v}_{H^2(\Omega_t)}^{\frac 12}\norm{\nabla v}_{L^\infty(\Omega_t)}^{\frac 12}.
\end{equation}  
Thus, it holds
\begin{equation*}
\norm{\module{\nabla^2v}^2}_{L^2(\Omega_t)}\norm{\nabla^3v}_{L^2(\Omega_t)}\le C\norm{\nabla v}_{L^\infty(\Omega_t)}\norm{v}_{H^3(\Omega_t)}^2
\end{equation*}
and 
\begin{equation*}
\frac{d\Lambda_3}{dt}
\le C\norm{\nabla v}_{L^\infty(\Omega_t)}E(t).
\end{equation*}

Combining with the above calculations and applying Lemma \ref{le5}, we obtain
\begin{align*}
J_1(t)+J_2(t)&=-\int_{\parOmega_t} \DT^{2} p (\DT^{2}v \cdot \normal) dS-\int_{\parOmega_t}(\DT^{2}v\cdot \normal) \LB(\DT v\cdot \normal)dS\\
&=-\int_{\parOmega_t}\error (\DT^{2}v \cdot \normal)dS.
\end{align*}
Therefore, the claim \eqref{eqclaim1} follows.
\end{proof}

The error term $\error$ can be estimated in the following lemma.
\begin{lemma} 
Under \eqref{eqCCC}, for any $t \in (0, \Tstar)$, we have
\begin{equation}\label{eqRp}
\norm{\error}_{H^{\frac 12}(\parOmega_t)}\le C(\CC)\Paren{  \norm{\nabla v}_{L^\infty(\Omega_t)}+\norm{\nabla p}_{H^1(\Omega_t)}+1} \sqrt{E(t)}.
\end{equation}
\end{lemma}
\begin{proof}
From Lemma \ref{le5}, the error $\error$ can be expressed as
\begin{equation*}
\error 
=-\module{\sff}^2 \DT v\cdot\normal+\Bdnabla p\cdot\DT v +\Bdnabla^2v\tencon\Bdnabla v\tencon\normal+\Bdnabla v\tencon\Bdnabla v\tencon \sff.
\end{equation*}

For the second term, recalling the curvature bound provided in \eqref{eqCCC}, we proceed by applying Kato--Ponce estimates from Lemma \ref{leKatoPonce}. It follows from \eqref{eqEuler1} and the definition of the tangential derivative in \eqref{eqtangentialderivative} that 
\begin{equation*}
\Bdnabla p\cdot\DT v=-\Bdnabla p\cdot\nabla p=-\Bdnabla p\cdot\Paren{\Bdnabla p+(\nabla p \cdot\normal)\normal}=-\Bdnabla p\cdot \Bdnabla p,
\end{equation*} 
since $\Bdnabla p\cdot\normal$ vanishes. Next, by the extension of the normal vector satisfying $\norm{\normal}_{H^{3}(\Omega_t)} \le C(\CC)$ as given in \eqref{eqextendnormal}, and by the trace theorem, we obtain
\begin{equation*}
\norm{\Bdnabla p\cdot\DT v}_{H^{\frac 12}(\parOmega_t)}=\norm{\Bdnabla p\cdot\Bdnabla p}_{H^{\frac 12}(\parOmega_t)}\le C\norm{\Bdnabla p\cdot\Bdnabla p}_{H^1(\Omega_t)}\le C(\CC)\norm{\module{\nabla p}^2}_{H^1(\Omega_t)}. 
\end{equation*}
We then apply \eqref{eqEuler1}, Lemma \ref{leKatoPonce}, and the Sobolev embedding theorem to obtain
\begin{align*}
\norm{\module{\nabla p}^2}_{H^1(\Omega_t)}
\le{}& C(\CC)\norm{\nabla p}_{L^{6}(\Omega_t)}\norm{\nabla p}_{W^{1,3}(\Omega_t)}\\
\le{}& C(\CC)\norm{\nabla p}_{H^1(\Omega_t)}\norm{\nabla p}_{H^{\frac 32}(\Omega_t)}\\
\le{}& C(\CC)\norm{\nabla p}_{H^1(\Omega_t)}\sqrt{E(t)}.
\end{align*}

For the first term, again utilizing the curvature bound in \eqref{eqCCC}, the bilinear inequality \eqref{eqbilinearineq}, and the trace theorem, we have
\begin{equation*}
\norm{\module{\sff}^2\DT v\cdot\normal}_{H^{\frac 12}(\parOmega_t)}\le C(\CC)\norm{\sff}_{H^{\frac 32}(\parOmega_t)}^2\norm{\DT v\cdot\normal}_{H^{\frac 12}(\parOmega_t)}\le C(\CC)\sqrt{E(t)}.
\end{equation*} 

For the remaining terms, owing to the regularity of the normal vector and the second fundamental form as stated in \eqref{eqCCC}, and by the bilinear inequality \eqref{eqbilinearineq}, it suffices to show that  
\begin{equation*}
\norm{\Bdnabla^2v\tencon\Bdnabla v}_{H^{\frac 12}(\parOmega_t)} 
\le C(\CC)\norm{\nabla v}_{L^\infty(\Omega_t)}\sqrt{E(t)}.
\end{equation*}
Indeed, by applying \eqref{eqH1/2}, one obtains
\begin{equation}\label{eqnununu} 
\norm{\Bdnabla^2v\tencon\Bdnabla v}_{H^{\frac 12}(\parOmega_t)} 
\le  C(\CC)\Paren{\norm{\Bdnabla^2v\tencon\Bdnabla v}_{L^{2}(\parOmega_t)}+\norm{\nabla\Paren{\Bdnabla^2v\tencon\Bdnabla v}}_{L^2(\Omega_t)}}. 
\end{equation} 

To control the second term, a straightforward calculation reveals that the following tensors can be expressed in terms of the $\tencon$-products:
\begin{align*}
\Bdnabla v={}&\nabla v+\nabla v\tencon\normal^{\tencon,2},\\
\nabla \Bdnabla v
={}&\nabla^2 v+\nabla^2 v\tencon\normal^{\tencon,2}+\nabla v\tencon\nabla \normal\tencon\normal,\\
\Bdnabla^2v
={}&\nabla^2 v+\nabla^2 v\tencon\normal^{\tencon,2}+\nabla^2 v\tencon\normal^{\tencon,4}+\nabla v\tencon\nabla \normal\tencon\normal+\nabla v\tencon\nabla \normal\tencon\normal^{\tencon,3},\\
\nabla\Bdnabla^2v={}&\nabla^3 v+\nabla^3 v\tencon\normal^{\tencon,2}+\nabla^3 v\tencon\normal^{\tencon,4}+\nabla^2 v\tencon\nabla\normal\tencon\normal^{\tencon,3}+\nabla^2 v\tencon\nabla\normal\tencon\normal\\
&+\nabla v\tencon\nabla^2 \normal\tencon\normal+\nabla v\tencon\nabla^2 \normal\tencon\normal^{\tencon,3}+\nabla v\tencon\Paren{\nabla \normal}^{\tencon,2}+\nabla v\tencon\Paren{\nabla \normal}^{\tencon,2}\tencon\normal^{\tencon,2},
\end{align*}
where $T^{\tencon,m}$ denotes the $m$-fold $\tencon$-product of the tensor $T$. 

Then, we proceed to estimate 
\begin{equation*}
\norm{\nabla\Paren{\Bdnabla^2v\tencon\Bdnabla v}}_{L^2(\Omega_t)}\le C\Paren{ \underbrace{\norm{\nabla\Bdnabla^2v\tencon\Bdnabla v}_{L^2(\Omega_t)}}_{\eqqcolon \Pi_1}+\underbrace{\norm{\Bdnabla^2v\tencon\nabla\Bdnabla v}_{L^2(\Omega_t)}}_{\eqqcolon \Pi_2}}.
\end{equation*}

To estimate $\Pi_1$, for sufficiently small $\varepsilon > 0$, by the bilinear inequality \eqref{eqbilinearineq} and \eqref{eqextendnormal}, we obtain
\begin{equation*}
\norm{\Paren{\nabla v\tencon\normal^{\tencon,2}}\tencon\Paren{\nabla^3 v\tencon\normal^{\tencon,4}}}_{L^2(\Omega_t)}\le C(\CC)\norm{\nabla v}_{L^\infty(\Omega_t)}\norm{v}_{H^3(\Omega_t)}.
\end{equation*}
Similarly, for sufficiently small $\varepsilon > 0$, we obtain
\begin{align*} 
\norm{\Paren{\nabla v\tencon\normal^{\tencon,2}}\tencon\Paren{\nabla^2 v\tencon\nabla\normal\tencon\normal^{\tencon,3}}}_{L^2(\Omega_t)}
&\le  C\norm{\normal}_{H^{\frac 32+\varepsilon}(\Omega_t)}^5\norm{\normal}_{H^{\frac 52+\varepsilon}(\Omega_t)}\norm{\nabla v}_{L^\infty(\Omega_t)}\norm{v}_{H^3(\Omega_t)}\\
&\le C(\CC)\norm{\nabla v}_{L^\infty(\Omega_t)}\norm{v}_{H^3(\Omega_t)},\\
\norm{\Paren{\nabla v\tencon\normal^{\tencon,2}}\tencon\Paren{\nabla v\tencon\nabla^2\normal\tencon\normal^{\tencon,3}}}_{L^2(\Omega_t)}&\le C\norm{\normal}_{H^{\frac 32+\varepsilon}(\Omega_t)}^5\norm{\nabla^2\normal}_{L^{3}(\Omega_t)}\norm{\nabla v}_{L^\infty(\Omega_t)}\norm{v}_{H^3(\Omega_t)}\\
&\le C(\CC)\norm{\nabla v}_{L^\infty(\Omega_t)}\norm{v}_{H^3(\Omega_t)},\\
\norm{\Paren{\nabla v\tencon\normal^{\tencon,2}}\tencon\Paren{\nabla v\tencon\Bracket{\nabla \normal}^{\tencon,2}\tencon\normal^{\tencon,2}}}_{L^2(\Omega_t)}&\le C\norm{\normal}_{H^{\frac 32+\varepsilon}(\Omega_t)}^4\norm{\nabla\normal}_{H^{\frac 32+\varepsilon}(\Omega_t)}^2\norm{\nabla v}_{L^\infty(\Omega_t)}\norm{v}_{H^3(\Omega_t)}\\
&\le C(\CC)\norm{\nabla v}_{L^\infty(\Omega_t)}\norm{v}_{H^3(\Omega_t)},
\end{align*}
and the remaining estimates in $\Pi_1$ are straightforward.

To handle $\Pi_2$, it suffices to control the product term
\begin{equation*} 
\Paren{\nabla^2 v\tencon\normal^{\tencon,2}}\tencon\Paren{\nabla^2 v\tencon\normal^{\tencon,4}},
\end{equation*} 
since the remaining terms are either straightforward or have already been computed above. By Lemma \ref{leGN ineq} and \eqref{eqextendnormal}, it follows that
\begin{equation*}
\norm{\Paren{\nabla^2 v\tencon\normal^{\tencon,2}}\tencon\Paren{\nabla^2 v\tencon\normal^{\tencon,4}}}_{L^2(\Omega_t)}\le C(\CC) \norm{\nabla^2 v}_{L^4(\Omega_t)}^2 \le C(\CC) \norm{\nabla v}_{L^\infty(\Omega_t)}\norm{\nabla v}_{H^{2}(\Omega_t)}.
\end{equation*}
We conclude that
\begin{equation*}
\norm{\nabla\Paren{\Bdnabla^2v\tencon\Bdnabla v}}_{L^2(\Omega_t)}\le C(\CC)\norm{\nabla v}_{L^\infty(\Omega_t)}\norm{v}_{H^3(\Omega_t)},
\end{equation*}
and thus, from \eqref{eqnununu}, the trace theorem, and \eqref{eq_Bdleint}, it follows that
\begin{align*} 
\norm{\Bdnabla^2v\tencon\Bdnabla v}_{H^{\frac 12}(\parOmega_t)}  
\le{}&C(\CC) \Paren{\norm{\nabla v}_{L^\infty(\Omega_t)} \norm{\Bdnabla^2v}_{L^{2}(\parOmega_t)}+\norm{\nabla v}_{L^\infty(\Omega_t)}\norm{v}_{H^3(\Omega_t)}}\\ 
\le{}& C(\CC)\norm{\nabla v}_{L^\infty(\Omega_t)}\sqrt{E(t)}.
\end{align*}

The estimate for $\norm{\Bdnabla v\tencon\Bdnabla v}_{H^{\frac 12}(\parOmega_t)}$ can be derived similarly, and we omit the details. Therefore, \eqref{eqRp} follows. 
\end{proof}

Then, the terms $\int_{\parOmega_t}\error (\DT^{2}v \cdot \normal)dS$ and $\int_{\Omega_t}[\DT^{2},\nabla] p\cdot \DT^{2}vdx$ in \eqref{eqclaim1} can be controlled as follows.
\begin{lemma}
Under \eqref{eqCCC}, for any $t \in (0, \Tstar)$, it holds
\begin{equation}\label{eqclaim2}
\module{\int_{\parOmega_t}\error (\DT^{2}v \cdot \normal)dS}+\module{\int_{\Omega_t}[\DT^{2},\nabla] p\cdot \DT^{2}vdx}\le C(\CC)\Paren{\norm{\nabla v}_{L^\infty(\Omega_t)}+\norm{\nabla p}_{H^1(\Omega_t)}+1}E(t).
\end{equation}
\end{lemma}
\begin{proof} 
For the first term, it holds
\begin{equation*}
\module{\int_{\parOmega_t}\error (\DT^{2}v \cdot \normal)dS}
\le C\norm{\DT^{2}v\cdot\normal}_{H^{-\frac{1}{2}}(\parOmega_t)}\norm{\error}_{H^{\frac{1}{2}}(\parOmega_t)}.
\end{equation*}
Note that, from \eqref{eqCCC}, the free boundary $\parOmega_t \in H^{\frac 72}$, and thus, we can apply the normal trace theorem (see, e.g., \cite[Theorem 3.1]{Cheng2007} and \cite[Lemma 5.1]{Coutand2010}). 
\begin{equation*}
\norm{\DT^{2}v\cdot\normal}_{H^{-\frac{1}{2}}(\parOmega_t)}\le C(\CC)\Paren{\norm{\DT^2v}_{L^{2}(\Omega_t)}+\norm{\divergence \DT^{2}v}_{H^{-1}(\Omega_t)}},
\end{equation*}
and we have
\begin{align}
\norm{\divergence \DT^{2}v}_{H^{-1}(\Omega_t)}\le{}&\sup \Brace{\module{\int_{\Omega_t}\divergence \DT^{2}v Fdx}:F\in H^1_0(\Omega_t),\norm{F}_{H^1_0(\Omega_t)}\le 1}\nonumber\\
\le{}&\sup \Brace{\module{\int_{\Omega_t}\DT^{2}v\cdot \nabla Fdx}:F\in H^1_0(\Omega_t),\norm{F}_{H^1_0(\Omega_t)}\le 1}\nonumber\\
\le{}& \norm{\DT^{2}v}_{L^{2}(\Omega_t)}.\label{eqH-1}
\end{align} 
We then use \eqref{eqRp} to obtain
\begin{equation*}
\module{\int_{\parOmega_t}\error (\DT^{2}v \cdot \normal)dS}\le C(\CC)\Paren{\norm{\nabla v}_{L^\infty(\Omega_t)}+\norm{\nabla p}_{H^1(\Omega_t)}+1}E(t).
\end{equation*} 

For the second term, by applying \eqref{eqEuler1}, \eqref{eqDtlnablap}, and the Sobolev embedding theorem, we obtain
\begin{align*}
\module{\int_{\Omega_t}[\DT^{2},\nabla] p\cdot \DT^{2}vdx}
\le{}& C\norm{[\DT^{2},\nabla] p}_{L^2(\Omega_t)}\norm{\DT^2v}_{L^2(\Omega_t)}\\
\le{}& C\Paren{\norm{\nabla v\tencon \DT^2 v}_{L^2(\Omega_t)}+\norm{\nabla \DT v\tencon \DT v}_{L^2(\Omega_t)}}\norm{\DT^2v}_{L^2(\Omega_t)}\\
\le{}& C\left(\norm{\nabla v}_{L^\infty(\Omega_t)}\norm{\DT^2v}_{L^2(\Omega_t)}\right.\\
&\quad\left.+\norm{\nabla \DT v}_{L^3(\Omega_t)}\norm{\DT v}_{L^6(\Omega_t)}\right)\norm{\DT^2v}_{L^2(\Omega_t)}\\
\le{}& C\Paren{\norm{\nabla v}_{L^\infty(\Omega_t)}+\norm{\nabla p}_{H^1(\Omega_t)}}\Paren{\norm{\DT^2v}_{L^2(\Omega_t)}^2+\norm{\DT v}_{H^{\frac 32}(\Omega_t)}^2}\\
\le{}& C\Paren{\norm{\nabla v}_{L^\infty(\Omega_t)}+\norm{\nabla p}_{H^1(\Omega_t)}}E(t).
\end{align*}  
\end{proof}

Combining the above three lemmas, we obtain
\begin{equation}\label{eqmidstep}
\frac{d}{dt}\energy(t)\le C(\CC)\Paren{\norm{\nabla v}_{L^\infty(\Omega_t)}+\norm{\nabla p}_{H^1(\Omega_t)}+1}E(t)+I(t),
\end{equation}
where
\begin{equation}\label{eqdefI(t)}
I(t)=\int_{\Omega_t} \DT^{2} p\divergence \DT^{2}vdx.
\end{equation} 
To estimate $I(t)$, we make use of the following result.
\begin{lemma}
Under \eqref{eqCCC}, we have
\begin{equation}\label{eqclaim3}
\norm{\divergence \DT^{2}v}_{H^{\frac{1}{2}}(\Omega_t)}\le C(\CC)\Paren{\norm{\nabla v}_{L^{\infty}(\Omega_t)}+\norm{\nabla^2p}_{L^{2}(\Omega_t)}}\sqrt{E(t)},\quad t \in (0, \Tstar).
\end{equation}
\end{lemma}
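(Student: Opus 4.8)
The plan is to start from the pointwise identity in Lemma~\ref{le3}, which exhibits $\divergence\DT^2 v$ as the sum of a bilinear term $3\,\partial_i v^j\partial_j\DT v^i$ and a trilinear term $-2\,\partial_i v^j\partial_j v^k\partial_k v^i$, and to estimate the $H^{\frac12}(\Omega_t)$-norm of each separately by means of the product estimates of Lemmas~\ref{leKatoPonce} and \ref{leGN ineq}, the bilinear inequality \eqref{eqbilinearineq}, and \eqref{eqnablavW14}. Two preliminary reductions feed everything. First, $\DT v=-\nabla p$ by \eqref{eqEuler1}, so $\nabla\DT v=-\nabla^2 p$ and $\norm{\nabla^2 p}_{H^{\frac12}(\Omega_t)}\le\norm{\nabla p}_{H^{\frac32}(\Omega_t)}=\norm{\DT v}_{H^{\frac32}(\Omega_t)}\le\sqrt{E(t)}$, hence also $\norm{\nabla^2 p}_{L^3(\Omega_t)}\le C(\CC)\sqrt{E(t)}$ by Sobolev embedding. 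Second, the div-curl estimate \eqref{eqdivcurlk} with $k=1$, together with $\divergence v=0$ and the bounds \eqref{eqCCC}, gives $\norm{v}_{H^1(\Omega_t)}\le C(\CC)$, so by interpolation $\norm{\nabla v}_{H^s(\Omega_t)}\le C(\CC)E(t)^{s/4}$ for $s\in[0,2]$ and $\norm{\nabla v}_{L^\infty(\Omega_t)}\le C(\CC)\norm{v}_{H^3(\Omega_t)}\le C(\CC)\sqrt{E(t)}$ (more sharply, $\norm{\nabla v}_{L^\infty(\Omega_t)}\le C(\CC)E(t)^{3/8+\delta}$ for any small $\delta>0$, via $H^{3/2+}(\Omega_t)\hookrightarrow L^\infty$ and the same interpolation).

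For the bilinear term $\nabla v\tencon\nabla\DT v=-\nabla v\tencon\nabla^2 p$ I would apply the Kato-Ponce estimate of Lemma~\ref{leKatoPonce} in $H^{\frac12}(\Omega_t)$ with the exponent pairs $(p_2,q_2)=(\infty,2)$ and $(p_1,q_1)=(8,8/3)$. The first pair yields $\norm{\nabla v}_{L^\infty}\norm{\nabla^2 p}_{H^{\frac12}}\le\norm{\nabla v}_{L^\infty}\sqrt{E(t)}$ immediately. For the second, I would interpolate $\norm{\nabla v}_{W^{1/2,8}}\le C\norm{\nabla v}_{H^2}^{1/4}\norm{\nabla v}_{L^\infty}^{3/4}$ (Lemma~\ref{leGN ineq}) and $\norm{\nabla^2 p}_{L^{8/3}}\le C\norm{\nabla^2 p}_{L^2}^{1/4}\norm{\nabla^2 p}_{L^3}^{3/4}$, then combine with the preliminary bounds and Young's inequality to absorb $\norm{\nabla v}_{L^\infty}^{3/4}\norm{\nabla^2 p}_{L^2}^{1/4}\le\norm{\nabla v}_{L^\infty}+\norm{\nabla^2 p}_{L^2}$. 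This is the one place where the Kato-Ponce product estimate is genuinely needed rather than the plain bilinear estimate \eqref{eqbilinearineq}: the latter would force $\nabla v$ into $H^2$ and produce only the weaker bound $C(\CC)E(t)$, while routing through the interpolation $\norm{w}_{H^{1/2}}\le C\norm{w}_{L^2}^{1/2}\norm{w}_{H^1}^{1/2}$ would call for $\nabla^3 p$, which is not controlled by $E(t)$.

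For the trilinear term $\nabla v\tencon\nabla v\tencon\nabla v$ I would instead use $\norm{w}_{H^{\frac12}(\Omega_t)}\le C\norm{w}_{L^2(\Omega_t)}^{1/2}\norm{w}_{H^1(\Omega_t)}^{1/2}$ together with $\norm{\nabla v\tencon\nabla v\tencon\nabla v}_{L^2}\le\norm{\nabla v}_{L^\infty}^2\norm{\nabla v}_{L^2}$ and $\norm{\nabla(\nabla v\tencon\nabla v\tencon\nabla v)}_{L^2}\le C\norm{\nabla^2 v}_{L^4}\norm{\nabla v}_{L^8}^2$, controlling $\norm{\nabla^2 v}_{L^4}$ by \eqref{eqnablavW14} and each factor $\norm{\nabla v}_{L^p}$ ($p<\infty$) by $\norm{\nabla v}_{L^2}^{2/p}\norm{\nabla v}_{L^\infty}^{1-2/p}$. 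I expect this trilinear piece to be the main obstacle: the raw estimate carries a quadratic factor $\norm{\nabla v}_{L^\infty}^2$, whereas the claim is only linear in $\norm{\nabla v}_{L^\infty}$, so the surplus power of $\norm{\nabla v}_{L^\infty}$ must be traded back into $E(t)$ using the Sobolev bound $\norm{\nabla v}_{L^\infty}\le C(\CC)\norm{v}_{H^3}$; the whole argument then hinges on tracking the exponents of $E(t)$ through these trades carefully, which is precisely where exploiting the \emph{boundedness} of the low-order norms $\norm{v}_{L^2},\norm{\nabla v}_{L^2}$ to sharpen the interpolation exponents is decisive. Summing the bilinear and trilinear contributions then yields \eqref{eqclaim3}.
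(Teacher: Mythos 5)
Your treatment of the bilinear term $3\partial_i v^j\partial_j\DT v^i$ is correct and is essentially the paper's own argument with different exponents (the paper pairs $L^{12/5}\times W^{\frac12,12}$ where you pair $W^{\frac12,8}\times L^{8/3}$; both close via Young's inequality). The gap is in the trilinear term, and it is quantitative, not cosmetic. Write $A=\norm{\nabla v}_{L^\infty(\Omega_t)}$ and $B=\norm{v}_{H^3(\Omega_t)}\le\sqrt{E(t)}$, and recall $\norm{\nabla v}_{L^2(\Omega_t)}\le C(\CC)$ by \eqref{eqvH1}. Your splitting gives $\norm{\module{\nabla v}^3}_{L^2}\le C(\CC)A^2$ and $\norm{\nabla\Paren{\module{\nabla v}^3}}_{L^2}\le C\norm{\nabla^2 v}_{L^4}\norm{\nabla v}_{L^8}^2\le C(\CC)\Paren{B^{1/2}A^{1/2}}A^{3/2}=C(\CC)A^2B^{1/2}$, so the $L^2$--$H^1$ interpolation yields at best $C(\CC)A^2B^{1/4}$. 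The target is $C(\CC)AB$, which would require $A\le CB^{3/4}$; Sobolev only gives $A\le C(\CC)B$, and the trade you propose, replacing the surplus factor via $\norm{\nabla v}_{L^\infty}\le C(\CC)\norm{v}_{H^3}$, produces $C(\CC)AB^{5/4}\le C(\CC)\norm{\nabla v}_{L^\infty}E(t)^{5/8}$ --- an overshoot by $E^{1/8}$ that no amount of exponent bookkeeping inside this particular H\"older split can remove, even using the bounded low norms. The loss is fatal downstream: in Proposition \ref{prenergy estimate} the quantity \eqref{eqclaim3} is multiplied by another $\sqrt{E(t)}$ in the bound for $I_2(t)$, so $E^{5/8}$ would become $E^{9/8}$ and the Gr\"onwall closure \eqref{eqfinalestimate} breaks; the exact power $\sqrt{E}$ is the whole content of the lemma.

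The paper avoids ever producing two factors of $A$: it first proves \eqref{eqnablav2}, namely $\norm{\module{\nabla v}^2}_{H^{1/2}(\parOmega_t\text{-free, interior norm})}=\norm{\module{\nabla v}^2}_{H^{1/2}(\Omega_t)}\le C(\CC)\norm{v}_{H^3(\Omega_t)}$, where the second factor of $\nabla v$ is paid by the a priori bound $\norm{v}_{H^1(\Omega_t)}\le C(\CC)$ rather than by $L^\infty$, and then applies Kato--Ponce to the product $\nabla v\cdot\module{\nabla v}^2$, so that a single $\norm{\nabla v}_{L^\infty}$ appears: the main term is $\norm{\nabla v}_{L^\infty}\norm{\module{\nabla v}^2}_{H^{1/2}}\le C(\CC)AB$, and the complementary term $\norm{\nabla v}_{W^{1/2,1/\delta}}\norm{\module{\nabla v}^2}_{L^{2/(1-2\delta)}}$ is interpolated against the uniform $H^1$ bound back to the same quantity. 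Your $L^2$--$H^1$ interpolation scheme can be salvaged, but only by placing exactly one $L^\infty$ factor in each piece, e.g. $\norm{\module{\nabla v}^3}_{L^2}\le A\norm{\nabla v}_{L^4}^2\le C(\CC)AB^{3/4}$ and $\norm{\nabla\Paren{\module{\nabla v}^3}}_{L^2}\le CA\norm{\nabla^2 v}_{L^3}\norm{\nabla v}_{L^6}\le C(\CC)AB^{5/4}$, whose interpolation gives $C(\CC)AB\le C(\CC)\norm{\nabla v}_{L^\infty}\sqrt{E(t)}$ as required. As written, however, your trilinear step does not reach \eqref{eqclaim3}.
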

\begin{proof}
By Lemma \ref{le3}, it follows that
\begin{equation*}
\norm{\divergence \DT^{2}v}_{H^{\frac{1}{2}}(\Omega_t)}\le C\Paren{\norm{\partial_i v^j \partial_j\DT v^i}_{H^{\frac{1}{2}}(\Omega_t)}+\norm{\partial_i v^j\partial_jv^k\partial_kv^i}_{H^{\frac{1}{2}}(\Omega_t)}}.
\end{equation*}

For the first term, by applying Lemma \ref{leKatoPonce}, we obtain
\begin{align*}
\norm{\partial_i v^j \partial_j\DT v^i}_{H^{\frac{1}{2}}(\Omega_t)}\le{}& C\Paren{\norm{\nabla v}_{L^{\infty}(\Omega_t)}\norm{\nabla \DT v}_{H^{\frac{1}{2}}(\Omega_t)}+\norm{\nabla\DT v}_{L^{\frac{12}{5}}(\Omega_t)}\norm{\nabla v}_{W^{\frac{1}{2},12}(\Omega_t)}}\\
\le{}& C\Paren{\norm{\nabla v}_{L^{\infty}(\Omega_t)}\norm{\DT v}_{H^{\frac{3}{2}}(\Omega_t)}+\norm{\nabla^2p}_{L^{2}(\Omega_t)}^{\frac 12}\norm{\nabla v}_{L^{\infty}(\Omega_t)}^{\frac 12}\sqrt{E(t)}}\\
\le{}& C\Paren{\norm{\nabla v}_{L^{\infty}(\Omega_t)}+\norm{\nabla^2p}_{L^{2}(\Omega_t)}}\sqrt{E(t)},
\end{align*}
where we have used the following estimates
\begin{align*}
\norm{\nabla\DT v}_{L^{\frac{12}{5}}(\Omega_t)}&\le C\norm{\nabla\DT v}_{H^{\frac{1}{4}}(\Omega_t)}\le C\norm{\DT v}_{H^{\frac{5}{4}}(\Omega_t)}\le C\norm{\nabla p}_{H^{1}(\Omega_t)}^{\frac 12}\norm{\DT v}_{H^{\frac{3}{2}}(\Omega_t)}^{\frac 12},\\
\norm{\nabla v}_{W^{\frac{1}{2},12}(\Omega_t)}&\le C\norm{\nabla v}_{W^{1,6}(\Omega_t)}^{\frac 12}\norm{\nabla v}_{L^{\infty}(\Omega_t)}^{\frac 12}\le C\norm{\nabla v}_{L^{\infty}(\Omega_t)}^{\frac 12}\norm{v}_{H^{3}(\Omega_t)}^{\frac 12},
\end{align*}
which can be derived by applying Lemma \ref{leGN ineq}, \eqref{eqEuler1}, and the Sobolev embedding theorem.

To handle the second term, recalling from \eqref{eqCCC} that the $L^2$-norms of the velocity and vorticity are bounded by $C(\CC)$, and combining these with  divergence-free condition \eqref{eqEuler2} and the lower-order div-curl estimate \eqref{eqdivcurlk} for $k = 1$, we obtain the following bound
\begin{equation}\label{eqvH1}
\norm{v}_{H^1(\Omega_t)}\le C(\CC)\Paren{\norm{\divergence v}_{L^2(\Omega_t)}+\norm{\vorticity v}_{L^2(\Omega_t)}+\norm{v_n}_{H^{\frac 12}(\parOmega_t)}+\norm{v}_{L^2(\Omega_t)}}\le C(\CC).
\end{equation}
We then apply Lemma \ref{leKatoPonce} to deduce that \begin{align}
\norm{\module{\nabla v}^2}_{H^{\frac 12}(\Omega_t)}&\le C\norm{\nabla v}_{W^{\frac 12,\frac{1}{\delta}}(\Omega_t)}\norm{\nabla v}_{L^{\frac{2}{1-2\delta}}(\Omega_t)}\nonumber\\
&\le C\norm{v}_{H^{3-3\delta}(\Omega_t)}\norm{\nabla v}_{L^{\frac{2}{1-2\delta}}(\Omega_t)}\nonumber\\
&\le C(\CC)\norm{v}_{H^{3}(\Omega_t)},\label{eqnablav2}
\end{align}  
where $\delta > 0$ is sufficiently small, and we have used the following results, which are obtained by applying \eqref{eqvH1}, Sobolev embedding and interpolation 
\begin{align*}
\norm{v}_{H^{3-3\delta}(\Omega_t)}&\le C(\CC)\norm{v}_{H^{3}(\Omega_t)}^{1-\frac{3\delta}{2}}\norm{v}_{H^{1}(\Omega_t)}^{\frac{3\delta}{2}}\le C(\CC)\norm{v}_{H^{3}(\Omega_t)}^{1-\frac{3\delta}{2}},\\
\norm{\nabla v}_{L^{\frac{2}{1-2\delta}}(\Omega_t)}&\le C(\CC)\norm{v}_{H^{1+3\delta}(\Omega_t)}\le C(\CC)\norm{v}_{H^{3}(\Omega_t)}^{\frac{3\delta}{2}}\norm{v}_{H^{1}(\Omega_t)}^{1-\frac{3\delta}{2}}\le C(\CC)\norm{v}_{H^{3}(\Omega_t)}^{\frac{3\delta}{2}}. 
\end{align*}
Therefore, again from Lemma \ref{leKatoPonce}, the Sobolev embedding theorem, and \eqref{eqnablav2}, it follows that
\begin{align*}
\|\partial_i v^j\partial_jv^k\partial_kv^i\|_{H^{\frac{1}{2}}(\Omega_t)}
&\le C(\CC)\Paren{\norm{\nabla v}_{L^{\infty}(\Omega_t)}\norm{\module{\nabla v}^2}_{H^{\frac 12}(\Omega_t)}+\norm{\nabla v}_{W^{\frac 12,\frac{1}{\delta}}(\Omega_t)}\norm{\module{\nabla v}^2}_{L^{\frac{2}{1-2\delta}}(\Omega_t)} }\\
&\le C(\CC)\Paren{\norm{\nabla v}_{L^{\infty}(\Omega_t)}\norm{v}_{H^3(\Omega_t)}+\norm{\nabla v}_{L^{\infty}(\Omega_t)}\norm{v}_{H^{3-3\delta}(\Omega_t)}\norm{\nabla v}_{L^{\frac{2}{1-2\delta}}(\Omega_t)}}\\
&\le C(\CC)\norm{\nabla v}_{L^{\infty}(\Omega_t)}\sqrt{E(t)}.
\end{align*}
Therefore, \eqref{eqclaim3} follows. This completes the proof.
\end{proof}

\begin{proposition}\label{prenergy estimate} 
Under \eqref{eqCCC}, for any $t\in (0, \Tstar)$, it holds that
\begin{equation}\label{eqclaim4}
\module{I(t)}\le C(\CC)\Paren{\norm{\nabla v}_{L^\infty(\Omega_t)}+\norm{\nabla p}_{H^1(\Omega_t)}+1}  E(t), 
\end{equation}
where $I(t)$ is defined in \eqref{eqdefI(t)}, and the energy functional \eqref{eqenergy1} satisfies:
\begin{equation}\label{eqenergy estimate}
\frac{d\energy}{dt}\le C(\CC)\Paren{\norm{\nabla v}_{L^\infty(\Omega_t)}+1}E(t).
\end{equation}
\end{proposition}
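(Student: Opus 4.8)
The plan is to dispose of the pressure gradient first, then estimate $I(t)$ from \eqref{eqdefI(t)} by converting it, via Green's identity, into an interior integral driven by the explicit right-hand side of Lemma~\ref{le4} and a boundary integral governed by Lemma~\ref{le5}, and finally feed \eqref{eqclaim4} back into \eqref{eqmidstep}. For the pressure: from \eqref{eqlaplace p} we have $-\Delta p=\partial_i v^j\partial_j v^i$ in $\Omega_t$ with $p=\mc_{\parOmega_t}$ on $\parOmega_t$; elliptic regularity (legitimate since $\parOmega_t\in H^{7/2}$ by \eqref{eqCCC}), together with $\norm{\partial_i v^j\partial_j v^i}_{L^2(\Omega_t)}\le C\norm{\nabla v}_{L^4(\Omega_t)}^2\le C(\CC)\norm{\nabla v}_{L^\infty(\Omega_t)}$ (interpolation and \eqref{eqvH1}) and $\norm{\mc_{\parOmega_t}}_{H^{3/2}(\parOmega_t)}\le\CC$, gives the key preliminary bound $\norm{\nabla p}_{H^1(\Omega_t)}\le C(\CC)(\norm{\nabla v}_{L^\infty(\Omega_t)}+1)$; consequently \eqref{eqclaim3} upgrades to $\norm{\divergence\DT^2 v}_{H^{1/2}(\Omega_t)}\le C(\CC)(\norm{\nabla v}_{L^\infty(\Omega_t)}+1)\sqrt{E(t)}$, and since $\DT v=-\nabla p$ we also have $\norm{\nabla p}_{H^{3/2}(\Omega_t)}=\norm{\DT v}_{H^{3/2}(\Omega_t)}\le\sqrt{E(t)}$.

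Next I would introduce the auxiliary function $w$ solving $\Delta w=\divergence\DT^2 v$ in $\Omega_t$ with $w=0$ on $\parOmega_t$. By Lemma~\ref{leellipticestimate} (applied with $\divergence\DT^2 v=\divergence(\DT^2 v)$), $\norm{w}_{H^1(\Omega_t)}\le C\norm{\DT^2 v}_{L^2(\Omega_t)}\le C\sqrt{E(t)}$; by its first estimate and the preceding paragraph, $\norm{\partial_\normal w}_{H^1(\parOmega_t)}+\norm{\nabla w}_{H^{3/2}(\Omega_t)}\le C\norm{\divergence\DT^2 v}_{H^{1/2}(\Omega_t)}\le C(\CC)(\norm{\nabla v}_{L^\infty(\Omega_t)}+1)\sqrt{E(t)}$; and since $\divergence(\nabla w)=\divergence\DT^2 v$ with $\norm{\divergence\DT^2 v}_{H^{-1}(\Omega_t)}\le\norm{\DT^2 v}_{L^2(\Omega_t)}$ as in \eqref{eqH-1}, the normal trace theorem yields the crucial low-regularity bound $\norm{\partial_\normal w}_{H^{-1/2}(\parOmega_t)}\le C\sqrt{E(t)}$ carrying \emph{no} blow-up factor. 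Since $w$ vanishes on $\parOmega_t$, integrating by parts twice gives $I(t)=\int_{\Omega_t}\Delta\DT^2 p\,w\,dx+\int_{\parOmega_t}\DT^2 p\,\partial_\normal w\,dS=:I_{\mathrm{int}}(t)+I_{\partial}(t)$.

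For $I_{\mathrm{int}}$ I would insert Lemma~\ref{le4}: using $\divergence v=0$ one has $4\partial_i v^j\partial_j\DT^2 v^i=4\partial_j(\partial_i v^j\DT^2 v^i)$, so the divergence part of $-\Delta\DT^2 p$, integrated by parts against $w$ (no boundary term), is bounded by $C(\norm{\nabla v}_{L^\infty(\Omega_t)}\norm{\DT^2 v}_{L^2(\Omega_t)}+\norm{\nabla\DT v}_{L^3(\Omega_t)}\norm{\DT v}_{L^6(\Omega_t)})\norm{\nabla w}_{L^2(\Omega_t)}\le C(\norm{\nabla v}_{L^\infty(\Omega_t)}+\norm{\nabla p}_{H^1(\Omega_t)})E(t)$ (via $\norm{\nabla\DT v}_{L^3}\le C\norm{\DT v}_{H^{3/2}}\le\sqrt E$ and $\norm{\DT v}_{L^6}\le C\norm{\nabla p}_{H^1}$); the non-divergence terms $3\partial_i\DT v^j\partial_j\DT v^i$, $-12\partial_i v^l\partial_l v^j\partial_j\DT v^i$, $6\partial_i v^l\partial_l v^j\partial_j v^k\partial_k v^i$ are paired directly with $w$ and estimated by their $L^{6/5}(\Omega_t)$-norms against $\norm{w}_{L^6(\Omega_t)}\le C\sqrt E$, where by Lemmas~\ref{leGN ineq} and \ref{leKatoPonce} one gets $\norm{\module{\nabla\DT v}^2}_{L^{6/5}}\le C\norm{\DT v}_{H^{5/4}}^2\le C\norm{\nabla p}_{H^1}\sqrt E$, $\norm{\module{\nabla v}^2\module{\nabla\DT v}}_{L^{6/5}}\le C\norm{\nabla v}_{L^4}^2\norm{\nabla\DT v}_{L^3}\le C(\CC)\norm{\nabla v}_{L^\infty}\sqrt E$, and — the delicate point — $\partial_i v^l\partial_l v^j\partial_j v^k\partial_k v^i=\tfrac12(\partial_i v^j\partial_j v^i)^2=\tfrac12(\Delta p)^2$ (Cayley--Hamilton for the trace-free matrix $\nabla v$, or \eqref{eqlaplace p} directly), so that $\norm{(\Delta p)^2}_{L^{6/5}}=\norm{\Delta p}_{L^{12/5}}^2\le C\norm{\nabla p}_{H^{5/4}}^2\le C\norm{\nabla p}_{H^1}\norm{\nabla p}_{H^{3/2}}\le C\norm{\nabla p}_{H^1}\sqrt E$. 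For $I_\partial$ I would use Lemma~\ref{le5}, writing $I_\partial(t)=\int_{\parOmega_t}\Bdnabla(\DT v\cdot\normal)\cdot\Bdnabla(\partial_\normal w)\,dS+\int_{\parOmega_t}\error\,\partial_\normal w\,dS$ (integrating by parts on the closed surface $\parOmega_t$ in the $\LB$-term): the first piece is bounded by $\norm{\DT v\cdot\normal}_{H^1(\parOmega_t)}\norm{\partial_\normal w}_{H^1(\parOmega_t)}\le C(\CC)(\norm{\nabla v}_{L^\infty}+1)E$, and the second, by \eqref{eqRp} and the $H^{-1/2}$-bound above, by $\norm{\error}_{H^{1/2}(\parOmega_t)}\norm{\partial_\normal w}_{H^{-1/2}(\parOmega_t)}\le C(\CC)(\norm{\Bdnabla v}_{L^\infty(\parOmega_t)}+\norm{\nabla v}_{L^\infty(\Omega_t)}+\norm{\nabla p}_{H^1(\Omega_t)}+1)E(t)$.

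Collecting these bounds establishes \eqref{eqclaim4}; substituting it into \eqref{eqmidstep} and absorbing the pressure term via the preliminary bound $\norm{\nabla p}_{H^1(\Omega_t)}\le C(\CC)(\norm{\nabla v}_{L^\infty(\Omega_t)}+1)$ yields \eqref{eqenergy estimate}. I expect the main obstacle to be $I_{\mathrm{int}}$: every nonlinear contribution has to be controlled by a factor that is \emph{linear} in the blow-up norms $\norm{\nabla v}_{L^\infty}$, $\norm{\Bdnabla v}_{L^\infty}$, $\norm{\nabla p}_{H^1}$ times a single power of $E(t)$, which forces one to exploit $\divergence v=0$ to turn $\partial_i v^j\partial_j\DT^2 v^i$ into a divergence, to recognize the quartic gradient term as $(\Delta p)^2$ (so it can be routed through $\norm{\nabla p}_{H^{3/2}}=\norm{\DT v}_{H^{3/2}}\le\sqrt E$ rather than the unabsorbable $\norm{\nabla v}_{L^{24/5}}^4$), and, on the boundary, to pair $\error$ against the $H^{-1/2}(\parOmega_t)$ — not the $H^1(\parOmega_t)$ — norm of $\partial_\normal w$, the latter costing an extra blow-up factor.
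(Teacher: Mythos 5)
Your proposal is correct and follows the same architecture as the paper's proof: the same auxiliary Dirichlet problem for $\divergence\DT^{2}v$ (your $w$ is the paper's $u$ up to sign), the same double integration by parts splitting $I(t)$ into an interior part driven by Lemma~\ref{le4} and a boundary part treated with Lemma~\ref{le5}, the same $H^{1}$ versus $H^{-\frac12}$ dichotomy for $\partial_\normal w$ obtained from Lemma~\ref{leellipticestimate}, \eqref{eqclaim3}, \eqref{eqH-1} and the normal trace theorem, and the same use of \eqref{eqRp} to control the pairing with $\error$. The only genuine deviations are local, in the interior estimates: for $3\partial_i\DT v^j\partial_j\DT v^i$ you estimate directly through $\norm{\nabla\DT v}_{L^{12/5}}^2\le C\norm{\DT v}_{H^{5/4}}^2\le C\norm{\nabla p}_{H^1(\Omega_t)}\norm{\DT v}_{H^{3/2}(\Omega_t)}$, and for the quartic term you invoke the Cayley--Hamilton identity $\operatorname{tr}\bigl((\nabla v)^4\bigr)=\tfrac12\bigl(\operatorname{tr}((\nabla v)^2)\bigr)^2=\tfrac12(\Delta p)^2$, which is indeed valid since $\nabla v$ is trace-free; the paper instead handles both terms by further integrations by parts using $\divergence\DT v=\partial_iv^j\partial_jv^i$ and $\DT v=-\nabla p$. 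Both routes deliver the required factor, linear in the blow-up norms, times a single power of $E(t)$. Two cosmetic remarks: you front-load the pressure bound $\norm{p}_{H^2(\Omega_t)}\le C(\CC)\Paren{\norm{\nabla v}_{L^\infty(\Omega_t)}+1}$ (the paper's \eqref{eqpressureH2}, deferred there to the very end), which is fine and in fact yields a slightly stronger form of \eqref{eqclaim4}; and the quartic term is less delicate than you suggest, since peeling off one $L^\infty$ factor and using $\norm{\module{\nabla v}^3}_{L^{6/5}(\Omega_t)}\le C(\CC)\norm{v}_{H^3(\Omega_t)}$ (as the paper does) already suffices, so the Cayley--Hamilton observation, while elegant, is not forced.
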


\begin{proof}  
Consider the following elliptic equation 
\begin{equation}\label{eqellipticu}
\begin{cases}
-\Delta u=\divergence \DT^{2}v, & \text{ in } \Omega_t,\\
u=0, &  \text{ on } \parOmega_t.
\end{cases}
\end{equation}
We begin by applying the elliptic estimates in Lemma \ref{leellipticestimate}
\begin{equation}\label{eqellipticest}
\begin{aligned}
\norm{\partial_\normal u}_{H^1(\parOmega_t)}+\norm{\nabla u}_{H^{\frac{3}{2}}(\Omega_t)} &\le C\norm{\divergence \DT^{2}v}_{H^{\frac{1}{2}}(\Omega_t)},\\ \norm{u}_{H^1(\Omega_t)}&\le C\norm{\DT^2 v}_{L^2(\Omega_t)}.
\end{aligned}
\end{equation}  

We then integrate by parts to obtain
\begin{equation*}
I(t)=-\int_{\Omega_t}\Delta\DT^{2} p udx-\int_{\parOmega_t}\DT^{2} p\partial_\normal udS\eqqcolon I_{1}(t)+I_{2}(t).
\end{equation*} 
By applying Lemma \ref{le5} and integrating by parts once more, we have
\begin{align*}
-I_2(t) &=\int_{\parOmega_t}\Paren{-\LB (\DT v\cdot\normal)+\error}\partial_\normal udS\\
&=\int_{\parOmega_t} \Bdnabla(\DT v\cdot\normal)\cdot\Bdnabla\partial_\normal udS+\int_{\parOmega_t}\error\partial_\normal udS.
\end{align*}  
By the normal trace theorem, \eqref{eqellipticu}, \eqref{eqH-1}, and the second elliptic estimate in \eqref{eqellipticest}, we obtain
\begin{align*}
\norm{\partial_\normal u}_{H^{-\frac 12}(\parOmega_t)}&\le C\Paren{\norm{\nabla u}_{L^{2}(\Omega_t)}+\norm{\Delta u}_{H^{-1}(\Omega_t)}}\\
&\le C\Paren{\norm{\DT^2v}_{L^{2}(\Omega_t)}+\norm{\divergence \DT^{2}v}_{H^{-1}(\Omega_t)}}\\
&\le  C\norm{\DT^2v}_{L^{2}(\Omega_t)}.
\end{align*}
Then, we use the first elliptic estimate in \eqref{eqellipticest}, \eqref{eqclaim3}, and \eqref{eqRp} to deduce
\begin{align*}
\module{I_{2}(t)}\le{}& C\Paren{\norm{\Bdnabla(\DT v\cdot\normal)}_{L^2(\parOmega_t)}\norm{\partial_\normal u}_{H^1(\parOmega_t)}+\norm{\error}_{H^{\frac 12}(\parOmega_t)}\norm{\partial_\normal u}_{H^{-\frac 12}(\parOmega_t)}}\\
\le{}& C\sqrt{E(t)}\norm{\divergence \DT^{2}v}_{H^{\frac{1}{2}}(\Omega_t)}\\
&+C(\CC)\Paren{\norm{\nabla v}_{L^\infty(\Omega_t)}+\norm{\nabla p}_{H^1(\Omega_t)}+1}  \sqrt{E(t)}\norm{\DT^2v}_{L^{2}(\Omega_t)}\\
\le{}& C(\CC)\Paren{\norm{\nabla v}_{L^{\infty}(\Omega_t)}+\norm{\nabla^2p}_{L^{2}(\Omega_t)}}E(t)\\
&+C(\CC)\Paren{\norm{\nabla v}_{L^\infty(\Omega_t)}+\norm{\nabla p}_{H^1(\Omega_t)}+1} E(t) \\
\le{}&C(\CC)\Paren{\norm{\nabla v}_{L^\infty(\Omega_t)}+\norm{\nabla p}_{H^1(\Omega_t)}+1}E(t).
\end{align*}  

For $I_1(t)$, by Lemma \ref{le4}, \eqref{eqEuler1}, and  divergence-free condition \eqref{eqEuler2}, we integrate by parts to obtain
\begin{align*}
I_1(t)={}&\int_{\Omega_t}\Big[4\partial_i v^j \partial_j\DT^2 v^i+3\partial_i\DT v^j \partial_j\DT v^i-12\partial_i v^l\partial_l v^j \partial_j\DT v^i\\
&\qquad+6\partial_iv^l\partial_l v^j\partial_jv^k\partial_kv^i+\sum_j\partial_j \Paren{2\partial_j v_i\DT^2 v^i+\partial_j \DT v_i\DT v^i}\Big]udx\\
={}&\int_{\Omega_t}\Big[-4\partial_i v^j \DT^2 v^i\partial_ju-2\sum_j\partial_j v_i\DT^2 v^i\partial_j u+\sum_{i,j}\partial_j \DT v_i\partial_i p\partial_j u\\
&\qquad+3\partial_i\DT v^j \partial_j\DT v^iu-12\partial_i v^l\partial_l v^j \partial_j\DT v^iu+6\partial_iv^l\partial_l v^j\partial_jv^k\partial_kv^iu\Big]dx.
\end{align*}

We use \eqref{eqellipticest} and the Sobolev embedding theorem to estimate as follows
\begin{align*}
\norm{\partial_i v^j \DT^2 v^i\partial_ju}_{L^1(\Omega_t)}\le{}&C\norm{\nabla v}_{L^\infty(\Omega_t)}\norm{\DT^2v}_{L^2(\Omega_t)}\norm{\nabla u}_{L^2(\Omega_t)} \\
\le{}& C\norm{\nabla v}_{L^\infty(\Omega_t)}E(t),\\
\sum_j\norm{\partial_j v_i\DT^2 v^i\partial_j u}_{L^1(\Omega_t)} \le{}& C\norm{\nabla v}_{L^\infty(\Omega_t)}E(t),\\
\sum_{i,j}\norm{\partial_j \DT v_i\partial_i p\partial_j u}_{L^1(\Omega_t)}\le{}&C\norm{\nabla p}_{L^6(\Omega_t)}\norm{\nabla\DT v}_{L^3(\Omega_t)}\norm{\DT^2v}_{L^2(\Omega_t)}\\
\le{}& C\norm{\nabla p}_{H^1(\Omega_t)}E(t).
\end{align*}

For the fourth term, we apply \eqref{eqEuler1}, \eqref{eqdivDtv},  and integrate by parts to deduce
\begin{align*}
\int_{\Omega_t}\partial_i\DT v^j \partial_j\DT v^iudx
={}&-\int_{\Omega_t}\DT v^j \partial_j\divergence\DT vudx-\int_{\Omega_t}\DT v^j \partial_j\DT v^i\partial_iudx\\
={}&\int_{\Omega_t}\module{\divergence\DT v}^2udx+\int_{\Omega_t}\DT v^j \divergence\DT v\partial_judx\\
&-\int_{\Omega_t}\DT v^j \partial_j\DT v^i\partial_iudx\\
={}&\int_{\Omega_t}\partial_i v^j\partial_j v^i \partial_k v^l\partial_l v^kudx-\sum_j\int_{\Omega_t}\partial_j p \partial_i v^k\partial_k v^i\partial_judx\\
&+\sum_j\int_{\Omega_t}\partial_j p \partial_j\DT v^i\partial_iudx.
\end{align*}
Then, it follows that
\begin{align*}
\module{\int_{\Omega_t}\partial_i\DT v^j \partial_j\DT v^iudx}
\le{}&C\norm{\nabla v}_{L^\infty(\Omega_t)}\norm{\module{\nabla v}^3}_{L^{\frac{6}{5}}(\Omega_t)}\norm{u}_{L^6(\Omega_t)}\\
&+C\norm{\nabla p}_{L^6(\Omega_t)}\norm{\module{\nabla v}^2}_{L^{3}(\Omega_t)}\norm{\nabla u}_{L^2(\Omega_t)}\\
&+C\norm{\nabla p}_{L^6(\Omega_t)}\norm{\nabla \DT v}_{L^3(\Omega_t)}\norm{\nabla u}_{L^2(\Omega_t)}. 
\end{align*}
Note that by applying Lemma \ref{leGN ineq} and \eqref{eqvH1}, one obtains
\begin{align}
\norm{\module{\nabla v}^2}_{L^{3}(\Omega_t)}\le{}&\norm{v}_{W^{1,6}(\Omega_t)}^2\le C\norm{v}_{H^{1}(\Omega_t)}\norm{v}_{H^3(\Omega_t)}\le C(\CC)\norm{v}_{H^{3}(\Omega_t)},\label{e_23}\\
\norm{\module{\nabla v}^3}_{L^{\frac{6}{5}}(\Omega_t)}\le{}& \norm{v}_{W^{1,\frac{18}{5}}(\Omega_t)}^3\le C\norm{v}_{H^{3}(\Omega_t)}\norm{v}_{L^6(\Omega_t)}^2\le C(\CC)\norm{v}_{H^{3}(\Omega_t)}.\nonumber
\end{align}  
These, combined with the second elliptic estimate in \eqref{eqellipticest} and the Sobolev embedding theorem, yield that
\begin{align*}
\module{\int_{\Omega_t}\partial_i\DT v^j \partial_j\DT v^iudx}
\le{}&C(\CC)\Paren{\norm{\nabla v}_{L^\infty(\Omega_t)}+\norm{\nabla p}_{H^1(\Omega_t)}}E(t).
\end{align*}

Similarly, for the fifth term, from \eqref{e_23} and the second elliptic estimate in \eqref{eqellipticest}, it follows that
\begin{align*}
\norm{\partial_i v^l\partial_l v^j \partial_j\DT v^iu}_{L^1(\Omega_t)}\le{}&\norm{\module{\nabla v}^2}_{L^3(\Omega_t)}\norm{\nabla^2p}_{L^2(\Omega_t)}\norm{u}_{L^6(\Omega_t)}\\
\le{}& C(\CC)\norm{\nabla^2p}_{L^2(\Omega_t)}E(t),
\end{align*} 

For the last term, note that by applying integration by parts, one has
\begin{equation*}
\int_{\Omega_t}\partial_iv^l\partial_l v^j\partial_jv^k\partial_kv^iudx=-\int_{\Omega_t}v^l\partial_l v^j\partial_jv^k\partial_kv^i\partial_iudx-\int_{\Omega_t}v^l\partial_i\Paren{\partial_l v^j\partial_jv^k}\partial_kv^iudx.
\end{equation*}
Then, we apply \eqref{e_23}, \eqref{eqellipticest}, the Sobolev embedding theorem, and \eqref{eqvH1} to obtain
\begin{align*}
\module{\int_{\Omega_t}v^l\partial_l v^j\partial_jv^k\partial_kv^i\partial_iudx}\le{}&C\norm{\nabla v}_{L^\infty(\Omega_t)}\norm{v}_{L^6(\Omega_t)}\norm{\module{\nabla v}^2}_{L^3(\Omega_t)}\norm{\nabla u}_{L^2(\Omega_t)}\\
\le{}&C(\CC)\norm{\nabla v}_{L^\infty(\Omega_t)}\norm{v}_{H^3(\Omega_t)}\norm{\nabla u}_{L^2(\Omega_t)}\\
\le{}&C(\CC)\norm{\nabla v}_{L^\infty(\Omega_t)}E(t).
\end{align*}
Similarly, one has
\begin{align*} 
\module{\int_{\Omega_t}v^l\partial_i\Paren{\partial_l v^j\partial_jv^k}\partial_kv^iudx}
\le{}&C\norm{\nabla v}_{L^\infty(\Omega_t)}\norm{v}_{L^6(\Omega_t)}\norm{\nabla^2 v}_{L^6(\Omega_t)}\\
&\quad\cdot\norm{\nabla v}_{L^2(\Omega_t)}\norm{u}_{L^6(\Omega_t)}\\
\le{}&C(\CC)\norm{\nabla v}_{L^\infty(\Omega_t)}\norm{v}_{H^3(\Omega_t)}\norm{u}_{H^1(\Omega_t)}\\
\le{}&C(\CC)\norm{\nabla v}_{L^\infty(\Omega_t)}E(t).
\end{align*}
We conclude that
\begin{equation*}
I_1(t)\le C(\CC)\Paren{\norm{\nabla v}_{L^\infty(\Omega_t)}+\norm{\nabla p}_{H^1(\Omega_t)}}E(t),
\end{equation*}
and \eqref{eqclaim4} follows.

Finally, recalling \eqref{eqlaplace p}, we estimate the pressure by considering the following elliptic equation
\begin{equation*}
\begin{cases}
-\Delta p=\partial_iv^j\partial_jv^i, & \text{ in } \Omega_t,\\
p=\mc, &  \text{ on } \parOmega_t.
\end{cases}
\end{equation*}

By standard elliptic estimates, and using the curvature bound in \eqref{eqCCC}, and \eqref{eqvH1}, we obtain
\begin{align} 
\norm{p}_{H^2(\Omega_t)}\le{}& C\Paren{ \norm{\Delta p}_{L^2(\Omega_t)}+\norm{\mc}_{H^{\frac 32}(\parOmega_t)}}\nonumber\\
\le{}& C\Paren{\norm{\partial_iv^j\partial_jv^i}_{L^2(\Omega_t)}+C(\CC)}\nonumber\\
\le{}& C\Paren{\norm{\nabla v}_{L^2(\Omega_t)}\norm{\nabla v}_{L^\infty(\Omega_t)}+C(\CC)}\nonumber\\
\le{}&C(\CC)\Paren{\norm{\nabla v}_{L^\infty(\Omega_t)}+1}.\label{eqpressureH2}
\end{align} 
Combining the above estimate with \eqref{eqmidstep} and \eqref{eqclaim4}, \eqref{eqenergy estimate} follows, and we have completed the proof of the proposition. 
\end{proof}

In the following proposition, we establish the equivalence of two energy functionals.
\begin{proposition}\label{prclose energy}
Under  \eqref{eqCCC}, for any time  $t\in (0,\Tstar)$, we have 
\begin{equation}\label{eqclose energy}
E(t) \le C(\CC)(1+\energy(t)).
\end{equation}
\end{proposition}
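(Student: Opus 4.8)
The plan is to control each of the four nontrivial summands of the composite functional $E(t)$ in \eqref{eqenergy2} by $C(\CC)(1+\energy(t))$ separately, in an order dictated by their dependencies; throughout, the hypotheses of the div-curl estimate \eqref{eqdivcurlk} --- $\parOmega_t\in H^{2+\varepsilon}$ and $\norm{\sff}_{H^{3/2}(\parOmega_t)}\le C(\CC)$ --- are those recorded in \eqref{eqCCC}. The summand $\norm{\DT^{2}v}_{L^{2}(\Omega_t)}^{2}$ is immediate, since it occurs verbatim in $\energy(t)$ and so is $\le 2\energy(t)$.

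The first genuine step is the $H^{3}$-bound for $v$. Applying \eqref{eqdivcurlk} with $k=3$ to $F=v$, using $\divergence v=0$ and the uniform bounds $\norm{v_{n}}_{H^{5/2}(\parOmega_t)}+\norm{v}_{L^{2}(\Omega_t)}+\norm{\vorticity v}_{L^{2}(\Omega_t)}\le C(\CC)$ from \eqref{eqCCC}, it remains to estimate $\norm{\vorticity v}_{H^{2}(\Omega_t)}$. By interpolation on $\Omega_t$ (Lemma \ref{leGN ineq}) one has $\norm{\vorticity v}_{H^{2}(\Omega_t)}\le C\big(\norm{\vorticity v}_{L^{2}(\Omega_t)}+\norm{\nabla^{2}(\vorticity v)}_{L^{2}(\Omega_t)}\big)\le C(\CC)\big(1+\sqrt{\energy(t)}\big)$, the top-order term being $\le\sqrt{2\energy(t)}$ by \eqref{eqenergy1}. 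Squaring, $\norm{v}_{H^{3}(\Omega_t)}^{2}\le C(\CC)(1+\energy(t))$.

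The second step, and \textbf{main obstacle}, is the material-derivative term $\norm{\DT v}_{H^{3/2}(\Omega_t)}^{2}$: since $\DT v=-\nabla p$ by \eqref{eqEuler1}, this demands $p\in H^{5/2}(\Omega_t)$, but direct elliptic regularity for the Dirichlet problem $-\Delta p=\partial_{i}v^{j}\partial_{j}v^{i}$, $p=\mc_{\parOmega_t}$, would require $\mc_{\parOmega_t}\in H^{2}(\parOmega_t)$, whereas \eqref{eqCCC} gives only $\mc_{\parOmega_t}\in H^{3/2}(\parOmega_t)$ --- a half-derivative short. I avoid this by treating $\DT v=-\nabla p$ as a vector field in \eqref{eqdivcurlk} with $k=3/2$: here $\vorticity\DT v=0$; $\divergence\DT v=\partial_{i}v^{j}\partial_{j}v^{i}$ satisfies $\norm{\divergence\DT v}_{H^{1/2}(\Omega_t)}\le C(\CC)\norm{v}_{H^{3}(\Omega_t)}$ by \eqref{eqnablav2}; and $\norm{\DT v}_{L^{2}(\Omega_t)}\le\norm{p}_{H^{2}(\Omega_t)}\le C(\CC)\big(\norm{\nabla v}_{L^{\infty}(\Omega_t)}+1\big)\le C(\CC)\big(\norm{v}_{H^{3}(\Omega_t)}+1\big)$ by \eqref{eqpressureH2} and the embedding $H^{3}(\Omega_t)\hookrightarrow W^{1,\infty}(\Omega_t)$. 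The only remaining datum is the normal trace $\DT v\cdot\normal$ in $H^{1}(\parOmega_t)$, which is exactly the last summand of $E(t)$: its tangential-gradient part $\norm{\Bdnabla(\DT v\cdot\normal)}_{L^{2}(\parOmega_t)}^{2}$ appears in $\energy(t)$ and so is $\le 2\energy(t)$, while its $L^{2}(\parOmega_t)$ part obeys, by the trace inequality, $\norm{\DT v\cdot\normal}_{L^{2}(\parOmega_t)}\le C\norm{\DT v}_{H^{1}(\Omega_t)}=C\norm{\nabla p}_{H^{1}(\Omega_t)}\le C\norm{p}_{H^{2}(\Omega_t)}\le C(\CC)\big(\norm{v}_{H^{3}(\Omega_t)}+1\big)$. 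Using the $H^{3}$-bound already obtained, this gives $\norm{\DT v\cdot\normal}_{H^{1}(\parOmega_t)}^{2}\le C(\CC)(1+\energy(t))$, and feeding it back into the $k=3/2$ div-curl estimate yields $\norm{\DT v}_{H^{3/2}(\Omega_t)}^{2}\le C(\CC)(1+\energy(t))$.

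Summing the four contributions --- $\norm{\DT^{2}v}_{L^{2}(\Omega_t)}^{2}$, $\norm{v}_{H^{3}(\Omega_t)}^{2}$, $\norm{\DT v}_{H^{3/2}(\Omega_t)}^{2}$, $\norm{\DT v\cdot\normal}_{H^{1}(\parOmega_t)}^{2}$ --- together with the harmless additive $1$ in \eqref{eqenergy2} produces \eqref{eqclose energy}. Apart from the structural observation that $\nabla p$ must be recovered through the div-curl system (whose normal boundary datum the energy supplies) rather than through boundary elliptic regularity, every other step is a routine trace, interpolation or Sobolev estimate of the kind already assembled in Section \ref{se3}.
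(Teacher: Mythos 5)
Your proposal is correct and follows essentially the same route as the paper: the div-curl estimate \eqref{eqdivcurlk} with $k=3$ for $v$ (after interpolating $\norm{\vorticity v}_{H^2(\Omega_t)}$ against the top-order term in \eqref{eqenergy1}), with $k=\frac{3}{2}$ for $\DT v=-\nabla p$ (using $\vorticity \DT v=0$, \eqref{eqnablav2} for $\divergence \DT v$, and \eqref{eqpressureH2} for the pressure), and the tangential part of $\DT v\cdot\normal$ read off directly from $\energy(t)$. The only deviation is minor: you bound $\norm{\DT v\cdot\normal}_{L^2(\parOmega_t)}$ via $\module{\normal}=1$ and the standard trace theorem (with constant uniform by \eqref{eqCCC}), whereas the paper obtains the same bound through the divergence-theorem computation in \eqref{eqDtvnormalL2}; both yield $C(\CC)\Paren{\norm{v}_{H^3(\Omega_t)}^2+1}$, so the arguments coincide in substance.
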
 
\begin{proof}
Recalling that the $L^2$-norm of the vorticity is bounded by $C(\CC)$ as stated in \eqref{eqCCC}, by interpolation, we obtain
\begin{align}
\norm{\vorticity v}_{H^{2}(\Omega_t)}^2&\le C\Paren{\norm{\vorticity v}_{L^{2}(\Omega_t)}^2+\norm{\nabla^2\Paren{\vorticity v}}_{L^{2}(\Omega_t)}^2}\nonumber\\
&\le C(\CC)\Paren{\norm{\nabla^2\Paren{\vorticity v}}_{L^{2}(\Omega_t)}^2+1}\nonumber\\
&\le C(\CC)\Paren{\energy(t)+1}.\label{eqcurlH2}
\end{align} 
We aim to control
\begin{equation*}
\norm{\DT v\cdot \normal}_{L^2(\parOmega_t)}^2,\norm{\DT v}_{H^{\frac{3}{2}}(\Omega_t)}^2,\ \text{and}\ \norm{v}_{H^3(\Omega_t)}.
\end{equation*}
Using the divergence theorem, \eqref{eqCCC} and \eqref{eqextendnormal}, we have
\begin{align}
\norm{\DT v\cdot \normal}_{L^2(\parOmega_t)}^2
={}&\int_{\parOmega_t} [(\DT v\cdot \normal)\DT v]\cdot \normal dS\nonumber\\
\le{}&\int_{\Omega_t} \module{(\DT v\cdot \normal)\divergence\DT v}dx+\int_{\Omega_t} \module{\nabla\DT v\tencon\DT v}dx
+\int_{\Omega_t}\module{\DT v\tencon\nabla\normal\tencon\DT v}dx\nonumber\\
\le{}& C(\CC)\Paren{\norm{\DT v}_{L^{2}(\Omega_t)}^2+\norm{\divergence\DT v}_{L^{2}(\Omega_t)}^2 +\norm{\nabla\DT v}_{L^{2}(\Omega_t)}\norm{\DT v}_{L^{2}(\Omega_t)}}\nonumber\\
\le{}& \varepsilon \norm{\DT v}_{H^{\frac32}(\Omega_t)}^2+C_\varepsilon(\CC)\norm{\DT v}_{L^2(\Omega_t)}^2\nonumber\\
\le{}& \varepsilon \Paren{\norm{\DT v}_{H^{\frac32}(\Omega_t)}^2+\norm{v}_{H^3(\Omega_t)}^2}+C_\varepsilon(\CC).\label{eqDtvnormalL2}
\end{align}
In the last step, we have utilized \eqref{eqEuler1}, \eqref{eqpressureH2}, the interpolation inequality, and the $L^2$-bound of the velocity given in \eqref{eqCCC} to obtain
\begin{equation*} 
\norm{\DT v}_{L^2(\Omega_t)}^2\le \varepsilon\norm{v}_{H^3(\Omega_t)}^2+C_\varepsilon\norm{v}_{L^2(\Omega_t)}^2\le \varepsilon\norm{v}_{H^3(\Omega_t)}^2+C_\varepsilon(\CC).
\end{equation*}

Noting that $\vorticity\DT v$ vanishes, applying \eqref{eqDtvnormalL2} and  \eqref{eqdivcurlk} for $k=\frac 32$, it follows that
\begin{align*}
\norm{\DT v}_{H^{\frac{3}{2}}(\Omega_t)}^2\le{}& C(\CC)\Paren{\norm{\DT v\cdot\normal}_{H^{1}(\parOmega_t)}^2+\norm{\DT v}_{L^{2}(\Omega_t)}^2 +\norm{\divergence \DT v}_{H^{\frac 12}(\Omega_t)}^2}\\
\le{}& \varepsilon \Paren{\norm{\DT v}_{H^{\frac32}(\Omega_t)}^2+\norm{v}_{H^3(\Omega_t)}^2}+ C_\varepsilon(\CC)\Paren{\norm{\divergence \DT v}_{H^{\frac 12}(\Omega_t)}^2+\energy(t)+1}.
\end{align*} 
By \eqref{eqdivDtv} and \eqref{eqnablav2}, we are able to control 
\begin{equation*} 
\norm{\divergence \DT v}_{H^{\frac 12}(\Omega_t)}^2\le C \norm{\module{\nabla v}^2}_{H^{\frac 12}(\Omega_t)}^2\le C(\CC)\norm{v}_{H^{3}(\Omega_t)}^2.
\end{equation*}
By combining the above estimates, we obtain
\begin{equation*}
\norm{\DT v\cdot \normal}_{L^2(\parOmega_t)}^2+\norm{\DT v}_{H^{\frac{3}{2}}(\Omega_t)}^2
\le C(\CC)\Paren{\norm{v}_{H^3(\Omega_t)}^2+\energy(t)+1} .
\end{equation*}

Therefore, it suffices to bound  $\norm{v}_{H^3(\Omega_t)}^2$. By applying  \eqref{eqdivcurlk} for $k=3$, divergence-free condition \eqref{eqEuler2}, \eqref{eqCCC} and \eqref{eqcurlH2}, it follows that
\begin{align*}
\norm{v}_{H^3(\Omega_t)}^2&\le C(\CC)\Paren{\norm{v_n }_{H^{\frac{5}{2}}(\parOmega_t)}+ \norm{v}_{L^{2}(\Omega_t)}+\norm{\divergence v}_{H^{2}(\Omega_t)}+\norm{\vorticity v}_{H^{2}(\Omega_t)}}\\
&\le C(\CC)\Paren{\norm{\vorticity v}_{H^{2}(\Omega_t)}+C(\CC)}\\
&\le C(\CC)(\energy(t)+1).
\end{align*} 
We conclude that
\begin{equation*}
\norm{\DT v\cdot \normal}_{L^2(\parOmega_t)}^2+\norm{\DT v}_{H^{\frac{3}{2}}(\Omega_t)}^2+\norm{v}_{H^3(\Omega_t)}^2\le C(\CC)(\energy(t)+1),
\end{equation*}
and \eqref{eqclose energy} follows. This completes the proof.
\end{proof}

Finally, we complete the blow-up criterion stated in Theorem \ref{thmain1}.
\begin{proof}[Proof of Theorem \ref{thmain1}]
By applying \eqref{eqenergy estimate} and \eqref{eqclose energy}, it follows that 
\begin{equation}\label{eqfinalestimate} 
\frac{d\energy}{dt}\le C(\CC)\Paren{\norm{\nabla v}_{L^\infty(\Omega_t)}+1}(1+\energy (t)),\quad 0< t<\Tstar.
\end{equation}
Integrating the above, and using \eqref{eqass3}, \eqref{eqCCC}, and again \eqref{eqclose energy}, we have
\begin{equation}\label{eqcontrad}
\sup_{0\le t< \Tstar}\energy(t)\le C(\CC,\Tstar)\Paren{\energy(0)+1}\ \text{and}\ \sup_{0\le t< \Tstar}E(t)\le C(\CC,\Tstar)\Paren{\energy(0)+1}.
\end{equation}
Then, from the definition in \eqref{eqenergy2} and the fact that $\Tstar<\infty$, we have $v(\cdot,\Tstar)\in H^3(\Omega_{\Tstar})$ and $\nabla p\in H^{\frac 32}(\Omega_{\Tstar})$. By the trace theorem, it holds
\begin{equation*}
\norm{\mc}_{H^{2}(\parOmega_{\Tstar})}<\infty.
\end{equation*}
This gives $\parOmega_{\Tstar} \in H^4$ by Lemma \ref{leboundary regularity}. In other words, the solution $(v(\cdot, t), \Omega_t)$ does not develop singularities at time $\Tstar$ and can be extended for some time. This leads to a contradiction, and the proof is complete.
\end{proof}

\section{Proof of Theorem \ref{thmain2}}\label{se5}

In this section, we impose the assumption of simple connectedness
to prove Theorem \ref{thmain2}. Under the exclusion of Cases
(1)--(3) in Theorem \ref{thmain1}, the argument leading to the
first two lines of \eqref{eqCCC} remains valid independently of any
interior vorticity bound. In particular, the domains satisfy a
uniform interior and exterior ball condition and have uniformly
bounded $H^{\frac72}$-regularity. We therefore apply
\cite[Proposition 1 and Corollary 1]{Ferrari1993} to obtain the
following result.
\begin{lemma}\label{leFerrari}
Let $\{\Omega_t\}_{0\le t<\Tstar}$ be a family of bounded, simply
connected domains satisfying 
\begin{equation}\label{eqFerrariGeometry}
\inf_{0\le t<\Tstar}\radi(\Omega_t)>\CC^{-1}>0,
\quad
\sup_{0\le t<\Tstar}
\norm{\parOmega_t}_{H^{\frac72}}
\le C(\CC).
\end{equation}
Let $u(\cdot,t)\in H^3(\Omega_t)$ satisfy
\begin{subnumcases}
{\label{eqFerrari}}
\divergence u=0,& \text{ in } \Omega_t,\label{eqFerrari1} \\ 
u_n=0,& \text{ on } \parOmega_t.\label{eqFerrari2} 
\end{subnumcases}
Then, the following estimate holds
\begin{equation}\label{eqcurl}
\norm{u}_{W^{1,\infty}(\Omega_t)}\le C(\CC)\Brace{\Bracket{1+\log^{+}\Paren{\norm{\vorticity u}_{H^2(\Omega_t)}}}\norm{\vorticity u}_{L^\infty(\Omega_t)}+1},
\end{equation}
where $t\in(0,\Tstar)$ and
$\log^{+}(s)=\max\{0,\log s\}$.
\end{lemma}
\begin{proof}
By the Sobolev embedding on the two-dimensional boundary,
the uniform $H^{\frac72}$-bound in
\eqref{eqFerrariGeometry} implies that
\begin{equation*}
\sup_{0\le t<\Tstar}
\norm{\parOmega_t}_{C^{2,\alpha}}
\le C(\CC),
\end{equation*}
for every $\alpha\in(0,1/2)$. Together with the uniform interior
and exterior ball condition in \eqref{eqFerrariGeometry}, this allows
us to apply
\cite[Proposition 1 and Corollary 1]{Ferrari1993}
on each $\Omega_t$, with a constant uniform in $t$ and depending only
on $\CC$. This concludes the proof.
\end{proof}
We are now able to improve the blow-up criterion (4) given in Theorem \ref{thmain1}.
\begin{proof}[Proof of Theorem \ref{thmain2}] 
Let $\Tstar<\infty$ be the maximal existence time defined in
Theorem \ref{thmain1}. We argue by contradiction. Assume that none
of Cases (1)--(3) in Theorem \ref{thmain1} occurs. As in the proof of
Theorem \ref{thmain1} in Section \ref{se3}, assumptions
\eqref{eqass1}, \eqref{eqass2}, and \eqref{eqass4} hold.
We do not assume \eqref{eqass3}. Instead, suppose that 
\begin{equation}\label{eqass5}
\int_{0}^{\Tstar} \norm{\vorticity v}_{L^\infty(\Omega_t)}dt\le C(\CC).
\end{equation}
We shall derive
\begin{equation}\label{eqgradvL1BKM}
\int_0^{\Tstar}
\norm{\nabla v}_{L^\infty(\Omega_t)}dt\le C(\CC),
\end{equation}
which, together with the exclusion of Cases (1)--(3), contradicts
Theorem \ref{thmain1}.

Set 
\begin{equation*}
K(t)=\norm{\nabla v}_{L^\infty(\Omega_t)}.
\end{equation*}
Under the exclusion of Cases (1)--(3), the first two lines of
\eqref{eqCCC} remain valid. Moreover, the energy conservation law
\eqref{eqvL2} gives
\begin{equation*}
\sup_{0\le t<\Tstar}
\norm{v}_{L^2(\Omega_t)}
\le C(\CC).
\end{equation*}
Therefore, applying the div-curl estimate \eqref{eqdivcurlk} with
$k=3$, and using divergence-free condition \eqref{eqEuler2}, we obtain
\begin{equation}\label{eqvH3BKM}
\norm{v}_{H^3(\Omega_t)}\le
C(\CC)\Paren{\norm{v_n}_{H^{\frac52}(\parOmega_t)}+\norm{v}_{L^2(\Omega_t)}+\norm{\vorticity v}_{H^2(\Omega_t)}}\le
C(\CC)\Paren{1+\norm{\vorticity v}_{H^2(\Omega_t)}}.
\end{equation}
We next derive an $H^2$ estimate for the vorticity. Define
\begin{equation*}
Y(t)=\mathrm{e}+\norm{\vorticity v}_{L^2(\Omega_t)}^2+\norm{\nabla^2\Paren{\vorticity v}}_{L^2(\Omega_t)}^2,
\end{equation*}
where $\mathrm{e}$ is the natural constant.
Since $v_0\in H^3(\Omega_0)$ and $\Tstar<\infty$, we have
$Y(0)<\infty$. By enlarging $\CC$ if necessary, we may assume that
\begin{equation}\label{eq_Y0Ts}
Y(0)+\Tstar\le \CC.
\end{equation} 
By the uniform geometric estimates in the first two lines of
\eqref{eqCCC}, the interpolation constants on $\Omega_t$ are uniform
in time. Hence,
\begin{equation}\label{eqomegaH2Y}
\norm{\vorticity v}_{H^2(\Omega_t)}^2\le
C(\CC)\Paren{\norm{\vorticity v}_{L^2(\Omega_t)}^2+\norm{\nabla^2\Paren{\vorticity v}}_{L^2(\Omega_t)}^2}\le C(\CC)Y(t).
\end{equation}
From \eqref{eqDtcurlv} and the Reynolds transport formula in Lemma \ref{leReynolds},
\begin{equation}\label{eqomegaL2BKM}
\frac{d}{dt}
\norm{\vorticity v}_{L^2(\Omega_t)}^2
\le
CK(t)\norm{\vorticity v}_{L^2(\Omega_t)}^2.
\end{equation}
Moreover, Lemma \ref{le3} gives 
\begin{equation*}
\DT \nabla^2 \Paren{\vorticity v}=\nabla v  \tencon \nabla^2 \Paren{\vorticity v}+ \Paren{\vorticity v}\tencon\nabla^{3}v +\nabla^2v  \tencon \nabla \Paren{\vorticity v}.
\end{equation*} 
Consequently,
\begin{align*}
\frac12\frac{d}{dt}
\norm{\nabla^2\Paren{\vorticity v}}_{L^2(\Omega_t)}^2\le{}&C K(t)\norm{\nabla^2\Paren{\vorticity v}}_{L^2(\Omega_t)}^2\\
&+C\norm{\vorticity v}_{L^\infty(\Omega_t)}\norm{\nabla^3v}_{L^2(\Omega_t)}
\norm{\nabla^2\Paren{\vorticity v}}_{L^2(\Omega_t)}\\
&+C\norm{\nabla^2v}_{L^4(\Omega_t)}
\norm{\nabla\Paren{\vorticity v}}_{L^4(\Omega_t)}
\norm{\nabla^2\Paren{\vorticity v}}_{L^2(\Omega_t)}\\
\le{}&C K(t)\norm{\nabla^2\Paren{\vorticity v}}_{L^2(\Omega_t)}^2+CK(t)\norm{v}_{H^3(\Omega_t)}
\norm{\nabla^2\Paren{\vorticity v}}_{L^2(\Omega_t)},
\end{align*}
where we have used the estimate
\begin{equation*} 
\norm{\nabla\Paren{\vorticity v}}_{L^4(\Omega_t)}\le \norm{\nabla v}_{W^{1,4}(\Omega_t)}\le C\sqrt{K(t)}\sqrt{\norm{\nabla v}_{H^2(\Omega_t)}},
\end{equation*}
by applying \eqref{eqnablavW14}.
By \eqref{eqvH3BKM}, we obtain
\begin{equation*}
\frac12\frac{d}{dt}
\norm{\nabla^2\Paren{\vorticity v}}_{L^2(\Omega_t)}^2\le
C(\CC)K(t)\Paren{1+\norm{\vorticity v}_{H^2(\Omega_t)}^2}.
\end{equation*}
Combining this estimate with \eqref{eqomegaH2Y} and \eqref{eqomegaL2BKM}, we have
\begin{equation}\label{eqBKMvortenergy}
\frac{dY(t)}{dt}\le C(\CC)K(t)Y(t).
\end{equation}
Let $w(x,t)$ be the solution of the following Neumann problem
\begin{equation*} 
\begin{cases}
\Delta w=0,& \text{ in } \Omega_t, \\
\partial_\normal w=v_n,& \text{ on } \parOmega_t,
\end{cases}
\end{equation*} 
where $0<t<\Tstar$. Owing to the uniform geometric estimates in the first two lines of
\eqref{eqCCC}, we obtain the following Schauder estimate (cf. \cite[Lemma 4.3]{Luo2024}),
\begin{equation}\label{eqSchauder}
\norm{w}_{C^{2,\alpha}(\Omega_t)}\le C(\CC)\norm{v_n}_{C^{1,\alpha}(\parOmega_t)},\quad \alpha\in(0,1/2).
\end{equation} 
Moreover, the Sobolev elliptic estimate gives
\begin{equation*}
\norm{\nabla w}_{H^3(\Omega_t)}
\le
C(\CC)\norm{v_n}_{H^{\frac52}(\parOmega_t)}\le C(\CC).
\end{equation*}
Define
\begin{equation*}
u=v-\nabla w.
\end{equation*} 
Then $u\in H^3(\Omega_t)$ and by divergence-free condition \eqref{eqEuler2},
\begin{equation*}
\begin{cases}
\divergence u=\divergence v-\Delta w=0,& \text{ in } \Omega_t, \\ 
u_n=v_n-\partial_\normal w=0,& \text{ on } \parOmega_t.
\end{cases}
\end{equation*}
Thus, $u$ solves system \eqref{eqFerrari}. 
We also note that 
\begin{equation*}
\vorticity u=\vorticity v,\quad \text{ in } \Omega_t.
\end{equation*} 
Applying the logarithmic estimate \eqref{eqcurl} and
\eqref{eqSchauder}, and using the uniform
$H^{\frac52}$-bound of $v_n$, we obtain
\begin{align}
\norm{v}_{W^{1,\infty}(\Omega_t)}&\le\norm{u}_{W^{1,\infty}(\Omega_t)} +\norm{\nabla w}_{W^{1,\infty}(\Omega_t)}\nonumber\\
&\le C(\CC)\Brace{\Bracket{1+\log^{+}\Paren{\norm{\vorticity v}_{H^2(\Omega_t)}}}\norm{\vorticity v}_{L^\infty(\Omega_t)}+1}+C\norm{w}_{C^{2,\alpha}(\Omega_t)}\nonumber\\
&\le C(\CC)\Brace{\Bracket{1+\log^{+}\Paren{\norm{\vorticity v}_{H^2(\Omega_t)}}}\norm{\vorticity v}_{L^\infty(\Omega_t)}+1},
\label{eqgradvBKM}
\end{align}
where $t\in\Paren{0,\Tstar}$ and the constant $\alpha \in (0,1/2)$ is fixed.
Set
\begin{equation*}
L(t)=\log Y(t).
\end{equation*}
Since $Y(t)\ge \mathrm{e}$, we have $L(t)\ge1$. By
\eqref{eqomegaH2Y},
\begin{equation*}
1+\log^+\Paren{\norm{\vorticity v}_{H^2(\Omega_t)}}\le C(\CC)L(t).
\end{equation*}
It then follows from \eqref{eqBKMvortenergy} and
\eqref{eqgradvBKM} that
\begin{equation}\label{eqlogYBKM}
L^\prime(t)=\frac{Y^\prime(t)}{Y(t)}
\le C(\CC)K(t)\le C(\CC)\Brace{1+\norm{\vorticity v}_{L^\infty(\Omega_t)}L(t)}.
\end{equation}
Applying Gr\"onwall's inequality to \eqref{eqlogYBKM}, we obtain
\begin{equation*}
L(t)\le\Paren{L(0)+C(\CC)t}\exp\Brace{C(\CC)\int_0^t\norm{\vorticity v}_{L^\infty(\Omega_s)}ds}.
\end{equation*}
By \eqref{eqass5} and \eqref{eq_Y0Ts}, this yields
\begin{equation}\label{eqomegaH2uniformBKM}
\sup_{0\le t<\Tstar}\norm{\vorticity v}_{H^2(\Omega_t)}\le C(\CC).
\end{equation} 
Finally, integrating \eqref{eqgradvBKM} over
$\left(0,\Tstar\right)$ and using
\eqref{eqomegaH2uniformBKM} and \eqref{eqass5}, we obtain
\begin{equation*}
\int_0^{\Tstar}\norm{\nabla v}_{L^\infty(\Omega_t)}dt\le C(\CC). 
\end{equation*}
This proves \eqref{eqgradvL1BKM}. Hence none of the four scenarios
in Theorem \ref{thmain1} occurs, contradicting the maximality of
$\Tstar$. Therefore, assumption \eqref{eqass5} is false, and hence
\begin{equation*}
\int_0^{\Tstar}
\norm{\vorticity v}_{L^\infty(\Omega_t)}dt=\infty.
\end{equation*}
The conclusion for irrotational flows follows immediately.
This concludes the proof.
\end{proof}  
\noindent\textbf{Acknowledgment}
S. Yang was supported by the Beijing Natural Science Foundation under Grant No. 1264051, the Beijing Postdoctoral Research Activity Funding Program, and the Funding Program for Newly Recruited Young Teachers of Beijing University of Technology. C. Hao was partially supported by the National Natural Science Foundation of China (NSFC) under Grant No. 12171460, the CAS Project for Young Scientists in Basic Research under Grant No. YSBR-031 and the National Key R\&D Program of China under Grant No. 2021YFA1000800. T. Luo was supported by a General Research Fund of Research Grants Council of Hong Kong under Grant No. 11313025.

\medskip

\noindent\textbf{Data Availability} Data sharing is not applicable to this article as no datasets were generated or analyzed during the current study.

\medskip

\noindent\textbf{Declarations}
\medskip

\noindent\textbf{Conflict of interest} The authors declare that there is no conflict of interest. 

\end{document}